\documentclass[a4paper,11pt,reqno]{amsart}
\setlength{\textwidth}{16.0cm}
\setlength{\oddsidemargin}{0.1cm}
\setlength{\evensidemargin}{0.1cm}
\linespread{1.1}     
\usepackage{xcolor}

% Font
%\usepackage[sc]{mathpazo}
\usepackage{fourier}

\linespread{1.1}     
\usepackage{color}

% Math Packs
\usepackage{amsmath}
\usepackage{amssymb}
\usepackage{amsthm}
\usepackage{amsfonts}
\usepackage{mathtools}
\usepackage{enumitem}
\usepackage{cite}
\usepackage{hyperref}

% Math enviros
\newtheorem{theorem}{Theorem}[section]
\newtheorem{lemma}[theorem]{Lemma}

\newtheorem{prop}[theorem]{Proposition}

\newtheorem{definition}{Definition}

\newtheorem{remark}{Remark}

% alias
\newcommand{\R}{\mathbb{R}}

% for the paper
\newcommand{\I}{\mathcal{I}}
\newcommand{\T}{\mathcal{T}}

%math
\newcommand{\1}{\mathbf{1}}
\newcommand{\eps}{\varepsilon}
\newcommand{\ovl}{\overline}

%alert

% numbering
\numberwithin{equation}{section}
%\usepackage[notref,notcite]{showkeys}

% ================================== MAIN ==========================================
\begin{document}

\title[]{Comparison principle for general nonlocal Hamilton-Jacobi equations with superlinear gradient}

\author[]{Adina Ciomaga}
\address{Adina Ciomaga: 
Université Paris Cité, 
CNRS, 
Sorbonne Université, 
Laboratoire Jacques-Louis Lions (LJLL), 
F-75006 Paris, France
\&
O. Mayer Mathematics Institute, 
Romanian Academy, Ia\c si,
700506 Ia\c si, Romania.}
\email{\tt adina@math.univ-paris-diderot.fr, adina.ciomaga@acadiasi.ro}

\author[]{Tr\'i Minh L\^e}
\address{
Tr\'i Minh L\^e: Institut für Stochastik und Wirtschaftsmathematik, VADOR E105-04, TU Wien, Wiedner Hauptstraße 8, A-1040 Wien.
%{\tt minh.le@tuwien.ac.at}
}
\email{\tt minh.le@tuwien.ac.at}

\author[]{Olivier Ley}
\address{
Olivier Ley: Univ Rennes, INSA Rennes, CNRS, IRMAR - UMR 6625, F-35000 Rennes, France.}
\email{\tt olivier.ley@insa-rennes.fr}

\author[]{Erwin Topp}
\address{
Erwin Topp: Instituto de Matem\'atica, Universidade Federal do Rio de Janeiro, Rio de Janeiro - RJ, 21941-909, Brazil, and Departamento de Matem\'atica y C.C., Universidad de Santiago de Chile,
Casilla 307, Santiago, Chile.}
\email{\tt etopp@im.ufrj.br}

\date{\today}

\maketitle
\textbf{Abstract}.
We obtain the comparison principle for discontinuous viscosity sub- and supersolutions of nonlocal Hamilton-Jacobi equations, with superlinear and coercive gradient terms. The nonlocal terms are integro-differential operators in Lévy form, with general measures: $x$-dependent, possibly degenerate and without any restriction on the order.
The measures must satisfy a combined Wasserstein/Total Variation-continuity assumption, which is one of the weakest conditions used in the context of viscosity approach for this type of integro-differential PDEs.  The proof relies on a regularizing effect due to the gradient growth. We present several examples of applications to PDEs with different types of nonlocal operators (measures with density, operators of variable order, Lévy-Itô operators).

{\textbf{Keywords:} nonlinear integro-differential equations, comparison principle, viscosity solutions}
 
{\textbf{AMS Subject Classification:} 35R09,  35B51, 35D40, 35J60, 35F21}

%\tableofcontents

% =================================================================================
\section{Introduction}\label{sec:intro}
% =================================================================================

% The problem -  type of equations
In this paper, we are interested in the well-posedness of degenerate elliptic nonlocal Hamilton-Jacobi equations with superlinear gradient growth. The model equation takes the form
\begin{equation}\label{eq:nHJ}
	\lambda u - \mathcal{I}_xu(x) + H(x, Du) = 0 \quad \text{in} \quad \R^N,
\end{equation}
where $\lambda > 0$ is a constant, $H:\R^N\times \R^N\to\R$ is a continuous Hamiltonian, superlinear and coercive with respect to the gradient variable, and $\I_x$ is an integro-differential operator of Lévy type, that is, for $u: \R^N \to \R$ measurable, and $\xi, x \in \R^N$, it writes
\begin{equation}\label{eq:Levy}
	\I_{\xi}u(x) = \int_{\R^N} \left( u(x + z) - u(x) - \1_B(z) Du(x)\cdot z \right) \nu_{\xi}(dz),
\end{equation}
whenever the integral makes sense.  Throughout the paper we assume $\nu_\xi$ is a measure in $\R^N$ with $\nu_\xi(\{ 0 \}) = 0$, and such that the family $\left( \nu_\xi \right)_{\xi\in\R^N}$ satisfies the uniform Lévy condition
$$\sup_{\xi\in \R^N} \int_{\R^N} \min(1,|z|^2) \nu_\xi(dz) < \infty.$$
Under this assumption, $\I_\xi u(x)$ makes sense if $u$ is bounded and sufficiently regular at $x$, say $C^2$. 

The types of Lévy measures we consider include, but are not restricted to, Carathéodory density measures, i.e. measures for which there exists $K:\R^N\times\R^N \to\R_+$  such that
 \begin{equation}\label{eq:density-measure} 
 	\nu_\xi(dz) = K(\xi,z) dz.
\end{equation}	
Within this case, of particular interest are kernels comparable to the one of the fractional Laplacian of order $\sigma \in (0,2)$, in the sense that there exist $0\leq c \leq C$, such that, 
\begin{equation}\label{eq:unif-elliptic}
	 \frac{c}{|z|^{N+\sigma}}  \leq K(\xi,z)  \leq \frac{C}{|z|^{N+\sigma}} \quad \mbox{for all} \ \xi \in \R^N, \ z \neq 0.
\end{equation}

% The result
We prove existence and uniqueness of bounded, continuous solutions of \eqref{eq:nHJ} within the framework of viscosity solutions' theory. Our main result, Theorem~\ref{thm.comparison}, establishes a comparison principle between bounded, discontinuous viscosity sub- and supersolutions of \eqref{eq:nHJ}. This, coupled with Perron's method, further allows us to obtain well-posedness for problem \eqref{eq:nHJ}, which is the content of Theorem~\ref{thm.well-posed}.
\smallskip

% biblio
The main novelty of the results presented in this article are related to the generality of the Lévy measures involved in the equation. At this respect, we shall mention that comparison results for general partial integro-differential equations (PIDEs) involving $\xi-$dependent operators of the form \eqref{eq:Levy} remain one the most important open questions in the field, even though progress has been made in this direction. Several results have been obtained, according to the {\em nature of the integral operator} - general Lévy or Lévy-Itô, or to {\em the order of the operator} - often reflected in the singularity at the origin of the L\'evy measure against which they are integrated. 
\smallskip

% -- Levy-Ito case
In the first case, we recall that a Lévy-Itô operator takes the form
\[ \mathcal J_\xi u(x) = \int_{\R^N} \left(u(x + j(x,z)) - u(x) - \mathbf{1}_{B}(z) Du(x) \cdot j(x,z)\right) \nu(dz), \]
where, for each $\xi \in \R^N$, $j(\xi, \cdot) : \R^N \to \R^N$ is a measurable function known as the \textsl{jump function}, and $\nu$ is a Lévy measure. Under adequate assumptions on $j$, the Lévy-Itô case is relatively well understood, either by means of optimal stochastic control or by direct viscosity methods. Some of the first existence and comparison results were established by Soner in \cite{S86b}. The viscosity theory for general PIDEs has been revisited by Barles and Imbert in \cite{BI08}. Therein, the authors provide a general variant of Jensen Ishii's Lemma for a large class of PIDEs. However,  comparison principles are only obtained for {\em nonlocal operators in Lévy-Itô form}. Their results generalize the ones obtained by Imbert in \cite{I05}  for first-order Hamilton Jacobi equations, and by Jakobsen and Karlsen \cite{JK05, JK06} and Arisawa in \cite{Ari06} for second order PIDEs. Strong comparison results were later given by Ciomaga in \cite{C12}. Barles, Ley and Topp extended  in\cite{BT16} the comparison results for PIDEs where the nonlocal diffusion is coupled with a dominant, coercive Hamiltonian. Chasseigne and Jakobsen in~\cite{CJ17} provided comparison results for quasilinear PIDEs with Lévy-Itô structure.
\smallskip

% -- Levy case order <=1
The general case of $\xi$-dependent Lévy operators is by far less understood. General existence and comparison results for semi-continuous (and yet unbounded) viscosity solutions were found by Alvarez and Tourin in~\cite{AT96} when the kernels involved are finite. In the case of space-time, bounded measures, results were provided by Amadori \cite{A03} and Alibaud \cite{A07}, for continuous viscosity solutions. For PIDEs involving \emph{first order operators}, meaning that  
\begin{equation}\label{order<1}
\sup_{\xi\in \R^N} \int_{\R^N} \min(1,|z|) \nu_\xi(dz) < \infty,
\end{equation}
results were established by Soner in \cite{S88}, followed by the works of Sayah in \cite{S91a}. 
\smallskip

% -- Levy case order >1
In the attempt of dealing with general nonlocal operators {\em up to the second order}, most of the times comparison results rely on extra assumptions. For instance, for equations ``in divergence form", uniqueness results for weak (or distributional) solutions can be obtained through Maximum Principles, see for instance~\cite{DT23}. In terms of the nonlocal operator, some symmetry assumption on the kernel is required. 
Coming back to the ``non-divergence" setting, Mou and {\'S}wi{\c{e}}ch~\cite{MS15} proved comparison principle for a class of nonlinear PIDEs including Bellman-Isaacs equations when {\em a priori regularity} of the involved sub and/or supersolution is known. This is a rather standard principle in the viscosity formulation, in the sense that the more regularity of the functions to compare we have, the better the control of the terms arising from the penalization introduced in the doubling variables method is.

\smallskip

An innovative step forward can be found in the paper of Guillen, Mou and {\'S}wi{\c{e}}ch~\cite{GMS19}, where new comparison principles among continuous solutions of PIDEs are obtained, for a wide class of nonlocal operators, by means of Optimal Transport techniques. In particular, they impose a Lipschitz continuity condition with respect to the $L^p$ transport metric, with the exponent $p\in[1,2]$ related to the order of the singularity at the origin. Though this approach still validates comparison principles for nonlocal equations of lower order (i.e.,~\eqref{order<1} holds), or when one of the functions to compare has higher a priori regularity, the methods introduced in~\cite{GMS19} allow to consider far more general family of measures which do not fit into the assumptions previously addressed.
\smallskip

% main contribution
The current work continues the research in the direction introduced in~\cite{GMS19}. Our main contribution is a comparison principle between discontinuous viscosity sub- and supersolutions, for {\em general $\xi$- dependent Lévy operators, without any restriction on the order}. The interest of the result is twofold: (i) there is no restriction on the order of the operator, and (ii) we do not need to assume any a priori regularity neither of the subsolution nor of the supersolution, aside from the upper and the lower semicontinuity. This last point is of particular interest since this implies that no uniform ellipticity (in the sense of Caffarelli and Silvestre~\cite{CS09}, that is $c > 0$ in~\eqref{eq:unif-elliptic}), or even weak ellipticity (as in \cite{BCCI12}) of the nonlocal operator is required. In fact, the nonlocal diffusion could be very degenerate in nature, and could be of mixed type, (that is, it can be combined with a local diffusion), or with variable order, see Sections \S~\ref{sec:extensions} and \S~\ref{sec:examples} for a more accurate exposition of these settings.
\smallskip

% types of Hamiltonians
At this point, we mention that such level of generality for the nonlocal operator is possible by the presence of $H$ in the equation, and we are still far from the most general comparison result among discontinuous viscosity sub and supersolutions. In fact, we get the result when we combine the nonlocal diffusion with a superlinear coercive Hamiltonian in the gradient variable. To fix ideas, the reader could bear in mind the superlinear eikonal model, where the Hamiltonian takes the form
\begin{equation}\label{eq:eik-H}
H(x,p)=b(x)|Du|^m - f(x)
\end{equation}
where $b,f: \R^N \to \R$ are bounded, continuous functions, with $b(\cdot) \geq b_0> 0$, and $m>1$. 
\smallskip

Roughly speaking, the degenerate elliptic nature of the nonlocality is controlled by the action of the Hamiltonian in the doubling variables procedure: if $u, v$ are respectively a viscosity sub- and super-solution to the problem, we double variables and penalize as usual, arguing over the function
$$
(x,y) \mapsto u(x) - v(y) - \epsilon^{-2}|x - y|^2, \quad x, y \in \R^N.
$$
Up to a localization, we have the existence of a maximum point $(\bar x, \bar y)$ of the above function, and we employ the viscosity inequalities for $u$ and $v$ at $\bar x, \bar y$ respectively. Then, under the assumptions of the data, the leading contribution of the Hamiltonian allows us to prove that the ``viscosity gradient"
$$
\frac{\bar x - \bar y}{\epsilon^2}
$$
is bounded, uniformly in $\epsilon$ (see Lemma~\ref{lem.p-bound}). This is exactly the type of conclusion we get, for instance, if one of the functions to compare is smooth. 
\smallskip

Of course, the continuity of the coefficients with respect to the state variable is a structural assumption of the method. As such, the hypotheses on the map $\xi \mapsto \nu_\xi$ are adapted to the singularity of the measures around the origin. Namely, we assume continuity in the total variation distance away from the origin, i.e. for $0<r<R$ there exists a modulus of continuity $\omega_{r,R}$ such that
\[ 
\vert \nu_x - \nu_y\vert(B_R\setminus B_r) \leq \omega_{R,r}(\vert x- y\vert), \quad \mbox{for all} \ x,y\in\R^N.
\]
On the other hand, we assume a 1/2-H\"older continuity on the Wasserstein distance ($L^2$ transport metric), localized around the origin, i.e. for $0<r<1$ we have
\[ 
W_2(\nu_x,\nu_y)(B_r) \leq o_r(1)\vert x- y\vert^{1/2}, \quad \mbox{for all} \  x,y\in\R^N.
\]
We refer the reader to Section \S\ref{subsec:assumptions} for precise definitions and hypotheses. These are sufficient conditions to conclude the result, as they lead to a ``regularizing effect" of the Hamiltonian into the equation, even though we neither use nor we prove any regularity property on $u$ or $v$. We point out that, one could invoke the H\"older estimates for {\em subsolutions} to~\eqref{eq:nHJ} when the gradient term dominates the diffusion. In terms of~\eqref{eq:unif-elliptic} and~\eqref{eq:eik-H}, this means that $m > \sigma$, see~\cite{BKLT15, BT16b}. On the other hand, if the nonlocality is uniformly elliptic and the diffusion rules the equation (that is, $c > 0$ in~\eqref{eq:unif-elliptic} and $m \leq \sigma$), it is possible to employ elliptic estimates to get H\"older estimates for continuous solutions, see~\cite{BCI11}. Nonetheless, our results cover the intriguing scenario where neither the ellipticity nor the coercivity rules the equation, namely, when $1 < m \leq \sigma$, and $\I_x$ is degenerate elliptic.
\smallskip

We finish this introduction by mentioning that these types of equations occur in the study of stochastic optimal control problems with jump-diffusion processes. In this setting, $\I_\xi$ is the infinitesimal generator of a jump process either in Lévy-Itô form, or in Courrège form, see \cite{App09}. In the local case (namely, when the diffusion is governed by a continuous process like the Brownian motion), a superlinear Hamiltonian with the form~\eqref{eq:eik-H} is related to trajectories controlled with unbounded drift, and the value function of the control problem is the viscosity solution of the associated Hamilton Jacobi equation, see~\cite{LL89, BF92}. We expect to have similar verification results in the nonlocal context, see for instance~\cite{OS05}. Since we do not assume convexity on $H$, this type of result has potential application to differential games.

\medskip

{The paper is organized as follows.} We introduce the notations and definitions, make precise the set of assumptions and state the main results in Section \S\ref{sec:assumptions}. In Section \S\ref{sec:main-proof} we prove the main comparison result. We then provide a couple of extensions of the Comparison Principle to the second order and time dependent case in Section \S\ref{sec:extensions}. Finally, we present a set of examples that illustrate the scope of the assumptions, which include interesting measures without density or measures with intricate elliptic properties in Section \S\ref{sec:examples}.

% =================================================================================
\section{Assumptions and main results}\label{sec:assumptions}
% =================================================================================

% ---------------------------------------------------------------------------------
\subsection{Notations} 
% ---------------------------------------------------------------------------------

% space R^N
The scalar product of $x, y \in \R^N$ is denoted by $x\cdot y$ and the Euclidean norm of $x$ by $|x|$. For $x \in \R^N$ and $r > 0$,  $B_r(x)$ is the open ball in $\R^N$, centered at $x$, of radius $r>0$. 
We write $B_r$ whenever the ball is centered at the origin and has radius $r$.  We denote by $B$ the unit ball, and by $B^* = B\setminus \{0\}$ the punctured ball.  
% spaces of functions
For $A$ a subset of  $\;\R^N$, let $USC(A)$, $LSC(A)$ be the spaces of upper semi-continuous functions on $A$ and lower semi-continuous functions on $A$, respectively.  Let $C(A)$ be the space of continuous functions on $A$,  and, for $k\ge 1$, let $C^k(A)$ be the class of $k-$continuously differentiable functions on $A$. 
% derivaties 
The gradient of a differentiable function $\phi:\R^N\to\R$ is denoted by $D\phi$, and the Hessian matrix of a twice differentiable function $\phi$ is denoted by $D^2\phi$.
We write $\mathbb S^N$ for the space of $N \times N$ symmetric matrices with real entries and  denote $I$ the $N \times N$ identity matrix.

% nonlocal
We introduce the following notations in order to evaluate the integral terms.  Let $A\subset \R^N$ be a measurable set.  
For $\xi, x,p\in\R^N$ and $\phi \in L^\infty(\R^N)$, we define
\begin{equation*}
\mathcal I_\xi[A](x,p,\phi) = \int_A \big( \phi(x + z) - \phi(x) - \1_B(z) p \cdot z \big) \nu_\xi(dz).
\end{equation*}
If $\phi \in C^2(A\cap B_r(x))\cap L^\infty(\R^N)$, and $p= D\phi(x)$, then we write
\begin{equation*}
\mathcal I_\xi[A](x,\phi) = \int_A \big( \phi(x + z) - \phi(x) - \1_B(z) D\phi(x) \cdot z \big) \nu_\xi(dz).
\end{equation*}
In the case when $\xi = x$, we drop the $\xi$ dependence and simply write 
$\I[A](x, \phi) := \I_x[A](x,\phi)$ and
$\I[A](x, p,\phi) := \I_x[A](x,p,\phi)$ respectively.

%smallness
Finally, $\omega : \R^+\to \R^+$ is  a modulus of continuity if it satisfies $\omega(s)\to 0$ as $s\to 0$.
We also employ the usual notation $o_\eps(1)$ to express a quantity $q(\eps) \to 0$ as $\eps\searrow 0$. For given parameters $\gamma_1, \cdots, \gamma_k$, and $\eps$, we write 
$\displaystyle o_\eps^{\gamma_1, \cdots, \gamma_k}(1)$ to express a quantity  
$ q(\eps; {\gamma_1, \cdots, \gamma_k}) \searrow 0 \; \text{ as }\;  \eps \searrow 0,
\; \text{ for fixed } \; \gamma_1, \cdots, \gamma_k.$

% ---------------------------------------------------------------------------------
\subsection{Distances between Lévy measures}
% ---------------------------------------------------------------------------------

We use in this paper two notions of metrics between Lévy measures: the Wasserstein distance and the total variation.

Following the notion of boundary Wasserstein metric between finite measures on bounded sets, introduced by Figalli and Gigli in ~\cite{FG10}, and later developed for L\'evy measures by Guillen, Mou and \'Swie\c ch in~\cite{GMS19}, we consider the following notion of Wasserstein distance. For a given measurable set $A\subset\R^N$, let $\mathcal M(A)$ be the set of L\'evy measures on $A$ -- possibly singular at the origin, that is
\begin{equation*} 
\mathcal M (A) := \left\{ \nu \text{ positive Borel measure on } A 
	\; :\; \nu(\{0\}) = 0, \; \int_{A\setminus \{0\}} \min(1,|z|^2) \nu (dz) < \infty \right\}. 
\end{equation*}	

Let $A$ be a subset of the unit ball $B$ such that $0\in A$. Given $\nu_1, \nu_2 \in \mathcal M(A)$, we define the set of admissible couplings for $\nu_1, \nu_2$ as the set of positive measures on $A \times A$ whose marginals on $A\setminus \{0\}$ are $\nu_1$ and $\nu_2$ respectively, that is
\begin{eqnarray*} 
{\text{Adm}_A}(\nu_1,\nu_2) & : = & \left\{ \gamma \text{ positive measure on } A \times A
	:  \gamma (\{(0,0)\}) = 0, \;  \pi^i_\# \gamma\mid_{A \setminus \{0\}} = \nu_i, 
	\text{ for } i=1,2 \right\},
\end{eqnarray*}	
where $\pi^i : \R^{2N}\to \R^N$, with $\pi^i(x_1,x_2) = x_i$, is the canonical projection in the $i$-th coordinate. 

\begin{definition}[Wasserstein distance]\label{def.Wass}
Let $p \in [1, 2]$ and $\nu_1, \nu_2 \in \mathcal M(B)$. For any $\,0<r<1$, we define 	
the \emph{Wasserstein distance between $\nu_1$ and $\nu_2$, restricted on $B_r$,} as
\[ W_p(\nu_1,\nu_2)(B_r):= 
			\left( \inf_{\gamma\in {\mathrm{Adm}_{B_r}(\nu_1^{r},\nu_2^{r})}} 
			\int \limits_{B_r\times B_r} |z_1 - z_2|^p d\gamma(z_1,z_2) \right)^{1/p},\]
where $\nu_i^r := \nu_i\mid_{B_r}$ is the restriction of the measure $\nu_i$ to $B_r$.
\end{definition}

\begin{remark}\em
Similarly to ~\cite{GMS19}, the admissible set $\mathrm{Adm}_A(\nu_1, \nu_2)$ considers measures defined on $A \times A$, despite the fact that marginals are restricted to $A \setminus \{ 0 \}$. This is due to the fact that Lévy measures are singular at the origin, therefore the origin can be used as an infinite reservoir: we can take mass and redistribute within $B$ or, conversely, take infinitely large mass from $B$ and send it to the origin, provided we pay an adequate transportation cost. This plays a substantial role in the definition. On the contrary, L\'evy measures being finite elsewhere, the boundary of the unit ball can be treated as having zero mass. Therefore, working with measures defined on the open ball $B$ or the closed ball $\ovl B$ shall not matter, but what matters is that mass is concentrated at the origin.  Additionally, as pointed out in~\cite{FG10}, the form of the cost function $c(z_1, z_2) = |z_1 - z_2|^p$ in the definition of Wasserstein distance allows us to assume without loss of generality that $\gamma(\{ (0, 0) \}) = 0$ for every $\gamma$ admissible. This justifies the choice in the definition of admissible couplings. % 
\end{remark}

We also employ the total variation between two Lévy measures, that we recall here for simplicity.
\begin{definition}[Total variation]\label{def.TV}
Let $\nu_1, \nu_2 \in \mathcal M(\R^N)$. We call \emph{the total variation between $\nu_1, \nu_2$} the positive measure
$$ |\nu_1 - \nu_2| = (\nu_1 - \nu_2)^+ + (\nu_1 - \nu_2)^-, $$
where $(\nu_1 - \nu_2)^\pm$ are given by the Jordan decomposition $\nu_1 - \nu_2 = (\nu_1 - \nu_2)^+ - (\nu_1 - \nu_2)^-$ on the signed measure $(\nu_1 - \nu_2)$, that is,
for any measurable set $A\subset \R^N\setminus \{0\}$,
\begin{eqnarray*}
  (\nu_1-\nu_2)^+(A) & := & \;\; \sup \big\{ \nu_1(E) - \nu_2(E) : 
        E \subseteq A, \ E \ \mbox{ measurable } \big\} \\
  (\nu_1-\nu_2)^-(A) & := & - \inf \big\{ \nu_1(E) - \nu_2(E) : 
        E \subseteq A, \ E \ \mbox{ measurable } \big\}.
\end{eqnarray*}
\end{definition}

We list below a series of basic properties of the Wasserstein distance between L\'evy measures, used throughout the paper.  We concentrate on Lévy measures supported on the unit ball $B$, but all the results can be readily adapted to other local domains of $\R^N$.

% ---------------------------------------------------------------------------------
\begin{lemma}\label{lem.supp-gamma}
Let $\nu_1, \nu_2 \in\mathcal M(B)$ such that ${\rm supp} (\nu_1) = A_1, {\rm supp} (\nu_2) = A_2$. Then, for each admissible coupling $\gamma \in \text{Adm}_B(\nu_1, \nu_2)$ we have that 
$ \displaystyle \; {\rm supp}(\gamma)\subset(A_1 \times A_2)\cup(A_1\times\{0\})\cup(\{0\}\times A_2).$
\end{lemma}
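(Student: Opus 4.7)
The plan is to argue contrapositively from the definition of the topological support: $\text{supp}(\gamma)$ is the smallest closed set whose complement has zero $\gamma$-measure, so $(z_1,z_2) \in \text{supp}(\gamma)$ if and only if every open neighborhood of $(z_1,z_2)$ carries positive $\gamma$-mass. I would therefore fix an arbitrary $(z_1,z_2) \in \text{supp}(\gamma)$ and split into cases according to whether each coordinate is $0$ or not.

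The main case is $z_1 \neq 0$ and $z_2 \neq 0$; I want to show $z_1 \in A_1$ and $z_2 \in A_2$. Suppose, for contradiction, that $z_1 \notin A_1 = \text{supp}(\nu_1)$. Then there is an open neighborhood $U_1$ of $z_1$ with $\nu_1(U_1) = 0$, and since $z_1 \neq 0$ we may further shrink $U_1$ so that $0 \notin \overline{U_1}$, i.e.\ $U_1 \subset B \setminus \{0\}$. For any open neighborhood $U_2$ of $z_2$, the admissibility condition gives
\[
\gamma(U_1 \times U_2) \;\leq\; \gamma(U_1 \times B) \;=\; (\pi^1_\# \gamma)(U_1) \;=\; \nu_1(U_1) \;=\; 0,
\]
where the key equality uses that $U_1 \subset B \setminus \{0\}$, so the marginal is exactly $\nu_1$ on $U_1$. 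This contradicts $(z_1,z_2) \in \text{supp}(\gamma)$. The argument for $z_2 \in A_2$ is symmetric, via the marginal condition on $\pi^2_\# \gamma$.

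The remaining cases are handled by the same recipe. If $z_1 = 0$ and $z_2 \neq 0$, then an open neighborhood $U_2 \subset B \setminus \{0\}$ of $z_2$ with $\nu_2(U_2) = 0$ would yield $\gamma(B \times U_2) = \nu_2(U_2) = 0$, contradicting that $(0, z_2)$ lies in the support; hence $z_2 \in A_2$ and $(z_1,z_2) \in \{0\} \times A_2$. Symmetrically for $z_1 \neq 0$ and $z_2 = 0$. The point $(0,0)$ does not require separate treatment since $\gamma(\{(0,0)\}) = 0$ by admissibility, and in any neighborhood of $(0,0)$ the mass must come from points with at least one nonzero coordinate, already covered by the previous cases.

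I do not expect any serious obstacle: the proof is essentially a bookkeeping argument about supports and marginals. The only subtle point, which I would emphasize, is that the admissibility condition controls marginals only on $B \setminus \{0\}$, so one must take care to choose neighborhoods avoiding the origin; this is automatic whenever the coordinate in question is nonzero, which is precisely the structure that makes the three-piece decomposition in the conclusion natural.
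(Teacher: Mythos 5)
Your argument is correct, but it is structured differently from the paper's. You work directly with the topological support: you fix a point $(z_1,z_2)\in\mathrm{supp}(\gamma)$ and split into cases according to which coordinates vanish, each time shrinking a neighborhood away from the origin so that the restricted marginal conditions $\pi^i_\#\gamma|_{B^*}=\nu_i$ apply verbatim. The paper instead first reduces, using $\gamma(\{(0,0)\})=0$, to the WLOG situation $0\in A_1\cap A_2$, then argues by contradiction with a positive-$\gamma$-measure set $E\subset (A_1\times A_2)^c$, decomposing $E$ into three pieces and killing each one with a marginal identity. Both routes rest on the same two ingredients (the restricted marginals and the vanishing of $\gamma$ at the corner), but your pointwise version is more elementary and sidesteps the slightly delicate reduction step in the paper; the price is a four-way case split instead of a single set decomposition.

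One spot needs tightening: the $(0,0)$ corner. Your claim that it is ``already covered by the previous cases'' is not literally right, since those cases concern individual support points with a nonzero coordinate, not arbitrary mass in a neighborhood of the corner. The clean way to finish is: suppose toward a contradiction that $(0,0)\in\mathrm{supp}(\gamma)$ but $0\notin A_1$ and $0\notin A_2$. Pick open neighborhoods $V_1,V_2$ of $0$ with $\nu_1(V_1)=\nu_2(V_2)=0$. Decomposing $V_1\times V_2$ into $\{(0,0)\}$, $(V_1\setminus\{0\})\times V_2$, and $\{0\}\times(V_2\setminus\{0\})$, and enlarging the last two factors, one gets
\[
\gamma(V_1\times V_2)\le \gamma(\{(0,0)\})+\gamma\bigl((V_1\setminus\{0\})\times B\bigr)+\gamma\bigl(B\times(V_2\setminus\{0\})\bigr)=0+\nu_1(V_1\setminus\{0\})+\nu_2(V_2\setminus\{0\})=0,
\]
contradicting $(0,0)\in\mathrm{supp}(\gamma)$. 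With that small patch the proof is complete and stands on its own as a valid alternative to the one in the paper.
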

% ---------------------------------------------------------------------------------

\begin{proof} 
Since $\gamma(\{(0,0)\})=0$, without any loss of generality, we may assume $0\in A_1 \cap A_2$. We show that 
 ${\rm supp}(\gamma)\subset A_1 \times A_2$. We argue by contradiction and assume there exists a set $E\subset (B\times B) \setminus (A_1\times A_2)$ such that $\gamma(E)>0$. 
 We decompose the set into the disjoint union $E = E_1 \cup E_2\cup F$, with 
 \[ E_1 = E\cap A_1 \times B, \quad E_2 = E\cap B \times A_2, \quad 
 F \subset A_1^c\times A_2^c.\]
 Since $\gamma$ is an admissible coupling with marginals $\nu^i$, $\pi^1(E_2) \subset B^*\setminus A_1$
 and $(\pi^1)^{-1}(\pi^1 (E_2))\supset E_2$, we have
\begin{eqnarray*}
  0 = \nu_1(\pi^1(E_2)) & = & \pi^1_\# \gamma  \mid_{B^*} (\pi^1(E_2)) =   \pi^1_\# \gamma  (\pi^1(E_2)\cap B^*) \\
  & = & \gamma((\pi^1)^{-1}(\pi^1(E_2))\cap (B^*\times B)) = \gamma (E_2\cap (B^*\times B))   \geq \gamma(E_2).
 \end{eqnarray*}
Similarly, we can check that $\gamma(E_1)= 0$ and $\gamma(F) = 0$. Summing up, we reach a contradiction with the assumption $\gamma(E)>0$.
\end{proof}

An important property employed in our arguments is related to the integration of functions of one variable against admissible couplings between two L\'evy measures. 
% ---------------------------------------------------------------------------------
\begin{prop}\label{prop.split}
Let $\nu_1, \nu_2\in \mathcal M(B)$ and $\gamma \in \mathrm{Adm}_B(\nu_1, \nu_2)$ an admissible coupling.  
If $f$ is a measurable function with support away from the origin (there exists $\delta > 0$ so that ${\rm supp}(f) \subset B \setminus B_\delta$), then 
\begin{eqnarray*}
	\int_{B \times B} f(z_i) d\gamma(z_1, z_2) & = & \int_B f(z)\nu_i(dz) \quad i=1,2.
\end{eqnarray*}
\end{prop}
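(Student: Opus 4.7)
The strategy is elementary: since $f$ vanishes near the origin, the integral $\int f(z_i)\,d\gamma$ is computed by the pushforward of $\gamma$ under $\pi^i$, which, when restricted to $B^*$, coincides with $\nu_i$ by admissibility. The only subtlety is handling the origin carefully, because $\gamma$ is a measure on $B\times B$ which may carry mass on the axes $\{0\}\times B$ and $B\times\{0\}$, while the marginal identity only holds on $B\setminus\{0\}$.

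Concretely, I would treat only the case $i=1$, the other being identical. First, use the change of variables formula for pushforward measures to write
\[
\int_{B\times B} f(z_1)\,d\gamma(z_1,z_2) \;=\; \int_B f(z)\,d(\pi^1_\#\gamma)(z).
\]
Second, split the right-hand side into the contribution on $B^*=B\setminus\{0\}$ and on $\{0\}$. The contribution at the origin vanishes since $f(0)=0$ (because $\mathrm{supp}(f)\subset B\setminus B_\delta$ and $0\notin B\setminus B_\delta$), so
\[
\int_B f(z)\,d(\pi^1_\#\gamma)(z) \;=\; \int_{B^*} f(z)\,d(\pi^1_\#\gamma\mid_{B^*})(z).
\]
Third, apply the defining property of an admissible coupling, $\pi^1_\#\gamma\mid_{B^*}=\nu_1$, to obtain
\[
\int_{B^*} f(z)\,d(\pi^1_\#\gamma\mid_{B^*})(z) \;=\; \int_{B^*} f(z)\,\nu_1(dz) \;=\; \int_B f(z)\,\nu_1(dz),
\]
where in the last step we use $\nu_1(\{0\})=0$.

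The argument is essentially a bookkeeping exercise, and no step should present a real obstacle; the one point that must be stated cleanly is that, although $\gamma$ itself may place mass on $\{0\}\times B$, this mass is irrelevant to the integral because $f$ vanishes in a neighborhood of the origin in its argument $z_1$. This is exactly why the hypothesis $\mathrm{supp}(f)\subset B\setminus B_\delta$ is imposed: it decouples the integral from the ``reservoir at $0$'' discussed in the preceding remark.
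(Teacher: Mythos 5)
Your proof is correct, and it is essentially the same argument as the paper's, just packaged more economically: the paper re-derives the change-of-variables identity $\int_{B\times B} f(z_1)\,d\gamma = \int_B f\,d(\pi^1_\#\gamma)$ from scratch (first for simple functions with $A_k\cap B_\delta = \emptyset$, then via monotone convergence and the decomposition $f = f^+ - f^-$), whereas you invoke the standard pushforward change-of-variables formula as a known result. In both versions the real content is the single observation you flag at the end: because $f$ vanishes on $B_\delta$, only the restriction $\pi^1_\#\gamma\!\mid_{B^*}$ matters, and this equals $\nu_1$ by the definition of admissibility, so the possibly infinite mass that $\gamma$ may place on $\{0\}\times B$ is invisible to the integral. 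One small remark: as stated the proposition allows arbitrary measurable $f$; for your final step with signed $f$ one should implicitly assume, as the paper does, that the integrals in question make sense (e.g.\ $f^+$ or $f^-$ has finite integral), which holds automatically in the applications since $\nu_1(B\setminus B_\delta)<\infty$ and $f$ is bounded there.
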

% ---------------------------------------------------------------------------------

\begin{proof}
For simple functions $f = \sum_{k=1}^n c_k \chi_{A_k}$ with $c_k \geq 0$ and $A_k \subset \R^N$ measurable, we can always assume that $A_k \cap B_\delta = \emptyset$. We have
\begin{eqnarray*}
\int_{B \times B} f(z_1) d\gamma(z_1, z_2) 
	& = &  \sum_{k=1}^n c_k \int_{B \times B} \chi_{A_k}(z_1) d\gamma(z_1, z_2)
	   	= \sum_{k=1}^n c_k \gamma(A_k \times B).
\end{eqnarray*}
Taking into account that $\gamma$ has marginals $\nu_i$, i=1,2, and $A_k \cap B_\delta = \emptyset$, we get
$\gamma(A_k \times B)=\gamma((A_k\cap B^*) \times B)=  \pi^1_\# \gamma \mid_{B^*} (A_k)=\nu_1(A_k).$ It follows that
\begin{eqnarray*}
\int_{B \times B} f(z_1) d\gamma(z_1, z_2) 
	& = &   \sum_{k=1}^n c_k \nu_1(A_k) 
		=  \sum_{k=1}^n c_k \int_B  \chi_{A_k}(z_1)\nu_1(dz_1) 
		= \int_B  f(z_1)\nu_1(dz_1).
\end{eqnarray*}  

For general nonnegative measurable functions, it is possible to conclude by the previous argument and the Monotone Convergence Theorem. From here, the result follows for a general measurable function with support away the origin, by writing $f = f^+ - f^-$ and using the previous results.
\end{proof}

An immediate corollary of the above lemma is the following result. %on the summation of separate variable functions.
% ---------------------------------------------------------------------------------
\begin{prop}\label{prop.split.f+g}
Let $\nu_1, \nu_2\in \mathcal M(B)$ and $\gamma \in \mathrm{Adm}_B(\nu_1, \nu_2)$ an admissible coupling. 
If $f, g: B \rightarrow \R$ are measurable functions with support away the origin, then  
\begin{equation*}
\int_{B \times B}[f(z_1) + g(z_2)] d\gamma(z_1, z_2) = \int_{B} f(z) \nu_1(dz) + \int_{B} g(z) \nu_2(dz).
\end{equation*}
\end{prop}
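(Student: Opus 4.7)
The plan is to treat this as an immediate corollary of Proposition~\ref{prop.split}, invoking linearity of the integral. The only subtlety is checking that the two pieces are individually integrable against $\gamma$, so that we are allowed to split the sum.

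First I would fix $\delta>0$ such that both $\mathrm{supp}(f)$ and $\mathrm{supp}(g)$ are contained in $B\setminus B_\delta$. Since $\nu_i$ is a Lévy measure on $B$, it satisfies $\nu_i(B\setminus B_\delta)<\infty$, so a standard reduction (writing $f=f^+-f^-$, $g=g^+-g^-$ and using that the positive and negative parts still have support away from the origin) lets me assume $f,g\geq 0$. Then Proposition~\ref{prop.split} applied to $f$ (with $i=1$) and to $g$ (with $i=2$) gives
\begin{equation*}
\int_{B\times B} f(z_1)\,d\gamma(z_1,z_2) = \int_B f(z)\,\nu_1(dz), \qquad \int_{B\times B} g(z_2)\,d\gamma(z_1,z_2) = \int_B g(z)\,\nu_2(dz),
\end{equation*}
both of which are finite (or at least well-defined in $[0,\infty]$).

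Once these integrability statements are in hand, I would simply split
\begin{equation*}
\int_{B\times B}[f(z_1)+g(z_2)]\,d\gamma(z_1,z_2) = \int_{B\times B} f(z_1)\,d\gamma + \int_{B\times B} g(z_2)\,d\gamma,
\end{equation*}
by linearity of the Lebesgue integral, and substitute the two identities above to conclude. For general signed $f,g$, decompose $f=f^+-f^-$, $g=g^+-g^-$ and apply the nonnegative case to each piece; all four resulting integrals are finite by the support condition, so there is no $\infty-\infty$ ambiguity.

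I do not anticipate any real obstacle: the support-away-from-origin hypothesis is precisely what makes every quantity finite, and Proposition~\ref{prop.split} already encodes the marginal compatibility of $\gamma$ with $\nu_1,\nu_2$ off a neighborhood of the origin. The statement is really just the bilinearity of the marginal relation.
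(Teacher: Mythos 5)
Your proof is correct and follows essentially the same route as the paper, which simply declares Proposition~\ref{prop.split.f+g} to be ``an immediate corollary'' of Proposition~\ref{prop.split}: apply the marginal identity to $f(z_1)$ and to $g(z_2)$ separately and add. The only caveat worth noting is that your claim ``all four resulting integrals are finite by the support condition'' tacitly assumes $f,g$ are $\nu_i$-integrable on $B\setminus B_\delta$ (e.g.\ bounded); this is the same implicit assumption the paper makes and is satisfied in every application, so it is not a genuine gap.
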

% ---------------------------------------------------------------------------------

If instead, we are integrating functions supported in a ball --such as a quadratics, the equality fails, but we still have a control of the integral against the admissible coupling, by the integral of one variable only. More precisely, the following holds.
% ---------------------------------------------------------------------------------
\begin{prop}\label{lem.Gigli}
Let $\nu_1, \nu_2 \in \mathcal M(B)$ and  $\gamma \in \mathrm{Adm}_B(\nu_1, \nu_2)$ an admissible coupling. Then
\[  \int_{B \times B} |z_1|^2 d\gamma(z_1, z_2) \leq 4 \int_{B} |z|^2 \nu_1(dz). \] 
\end{prop}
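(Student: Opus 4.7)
Plan: Proposition \ref{prop.split} reduces an integral over $B\times B$ of any test function depending on $z_1$ alone, and supported away from the origin, to an integral over $B$ against $\nu_1$. The only obstruction to applying it directly with the test function $|z_1|^2$ is that the latter has full support in $B$. My plan is to cut off a shrinking neighborhood of the origin, apply Proposition \ref{prop.split} to the truncation, and then pass to the limit by monotone convergence; the crucial observation making this painless is that $|z|^2$ vanishes at the origin, so nothing is lost in the limit.

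Concretely, for each $\delta\in(0,1)$ I would introduce
$$f_\delta(z) := |z|^2\, \1_{B\setminus B_\delta}(z).$$
Each $f_\delta$ is non-negative, measurable and supported in $B\setminus B_\delta$, so Proposition \ref{prop.split} with $i=1$ gives, for every $\delta>0$,
$$\int_{B\times B} f_\delta(z_1)\, d\gamma(z_1,z_2) \;=\; \int_B f_\delta(z)\, \nu_1(dz).$$
As $\delta\searrow 0$, the family $(f_\delta)$ is monotone increasing to $z\mapsto |z|^2 \1_{B^*}(z)$, so the monotone convergence theorem lets me pass to the limit on both sides. On the right, $\nu_1(\{0\})=0$ removes the indicator. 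On the left, although $\gamma$ is permitted to concentrate mass on $\{0\}\times B$, the integrand $|z_1|^2$ vanishes on that set and the indicator again plays no role. This yields the equality
$$\int_{B\times B}|z_1|^2\, d\gamma \;=\; \int_B |z|^2\, \nu_1(dz),$$
which is in fact strictly stronger than the stated inequality; the factor $4$ in the statement is simply harmless slack.

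The only point requiring care is the behavior at the origin, where the admissibility condition allows $\gamma$ to carry (possibly large) mass on $\{0\}\times B$ coming from the ``infinite reservoir" noted in the remark following Definition~\ref{def.Wass}. This would be a genuine obstacle for a more general integrand, but is neutralized here by the fact that $|z|^2\to 0$ as $z\to 0$, which is precisely why the direct truncation-plus-monotone-convergence argument closes the proof without any further work.
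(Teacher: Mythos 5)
Your proof is correct, and it actually establishes a strictly stronger conclusion: the equality $\int_{B\times B}|z_1|^2\,d\gamma = \int_B|z|^2\,\nu_1(dz)$, not merely the stated bound with factor $4$. The route is genuinely different from the paper's. The paper decomposes $B^*$ into dyadic annuli $A_k = B_{2^{-k}}\setminus B_{2^{-(k+1)}}$, bounds $|z_1|^2 \leq 2^{-2k}$ on $A_k$ to move the integral through the marginal condition $\gamma(A_k\times B)=\nu_1(A_k)$, and then bounds $2^{-2k}\leq 4|z|^2$ on $A_k$ to return to an integral against $\nu_1$; the factor $4$ is the cost of this two-sided piecewise-constant approximation on each annulus. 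Your truncation-plus-monotone-convergence argument avoids that loss entirely, since you invoke Proposition~\ref{prop.split} directly on $f_\delta = |z|^2\1_{B\setminus B_\delta}$ (whose support is away from the origin, so the hypothesis is met), and pass to the limit using the crucial fact that $|z|^2$ vanishes at the origin—precisely the point where $\gamma$ is allowed to hold mass that is invisible to the marginal restriction $(\pi^1_\#\gamma)\!\mid_{B^*}=\nu_1$. Both arguments are correct; yours is cleaner and gives the sharp constant, while the paper's dyadic version is a quick geometric estimate whose lossy constant is harmless in the later applications (Lemma~\ref{lem.est-NL-crown}).
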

% ---------------------------------------------------------------------------------
\begin{proof}
Decompose the integration domain as 
$ \displaystyle B \times B = \bigcup \limits_{k=0}^\infty A_k \times B \quad 
\text{with }	A_k = B_{2^{-k}} \setminus B_{2^{-(k + 1)}}. $
Then, taking into account that $\gamma$ is an admissible coupling and
 $\gamma(A_k\times B)=\gamma((A_k\cap B^*)\times B)=\nu_1(A_k)$, it follows that
\begin{eqnarray*}
\int_{B \times B} |z_1|^2 d\gamma(z_1, z_2) 
	&  = & \sum_{k=0}^\infty \int_{A_k \times B} |z_1|^2 d\gamma(z_1, z_2)
		\leq \sum_{k=0}^\infty 2^{-2k} \int_{A_k \times B} d\gamma(z_1, z_2) \\
	& =  &  \sum_{k=0}^\infty 2^{-2k} \int_{A_k}  \nu_1(dz_1)
		 \leq  4\sum_{k=0}^\infty \int_{A_k} |z_1|^2 \nu_1(dz_1).
\end{eqnarray*}
\end{proof}

Finally, the Wasserstein distance is always controlled by the second moment of the total variation. 
% ---------------------------------------------------------------------------------
\begin{prop}\label{prop.Wass-TV}
Let $\nu_1, \nu_2 \in\mathcal M(B)$. Then
\begin{equation}
	W_2^2(\nu_1, \nu_2)(B) \leq \int_{B} |z|^2 |\nu_1 -\nu_2|(dz).
\end{equation}
\end{prop}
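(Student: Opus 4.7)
The plan is to exhibit an explicit admissible coupling $\gamma \in \mathrm{Adm}_B(\nu_1, \nu_2)$ whose quadratic transport cost equals the right-hand side; passing to the infimum in Definition~\ref{def.Wass} then yields the bound. The construction exploits exactly the mechanism highlighted in the remark after Definition~\ref{def.Wass}, namely that the origin acts as an infinite mass reservoir, so mass mismatches can be freely routed through $0$ at the natural quadratic cost.

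First I would apply the Jordan--Hahn decomposition: write $\nu_1 - \nu_2 = (\nu_1-\nu_2)^+ - (\nu_1-\nu_2)^-$, with $(\nu_1-\nu_2)^\pm$ mutually singular positive Borel measures on $B \setminus \{0\}$, and introduce the common part $\mu := \nu_1 \wedge \nu_2$, which satisfies $\nu_1 = \mu + (\nu_1-\nu_2)^+$ and $\nu_2 = \mu + (\nu_1-\nu_2)^-$. Next I would define
\[
\gamma := (\mathrm{id},\mathrm{id})_{\#}\mu \;+\; (\mathrm{id}, 0)_{\#}(\nu_1-\nu_2)^+ \;+\; (0,\mathrm{id})_{\#}(\nu_1-\nu_2)^-,
\]
where $(f,g)_{\#}\eta$ denotes the pushforward of $\eta$ under $z\mapsto (f(z),g(z))$. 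The first summand couples the common mass to itself along the diagonal, while the second (resp. third) sends the excess mass of $\nu_1$ (resp. of $\nu_2$) to (resp. out of) the origin along a single coordinate axis.

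Then I would verify admissibility: each summand assigns no mass to $\{(0,0)\}$, so $\gamma(\{(0,0)\})=0$; and projecting onto the first coordinate gives $\mu + (\nu_1-\nu_2)^+$ plus a mass concentrated at the origin coming from the third summand, whose restriction to $B\setminus\{0\}$ is precisely $\nu_1$. The analogous computation works for the second marginal, so $\gamma\in\mathrm{Adm}_B(\nu_1,\nu_2)$. Finally, I would compute the cost: the diagonal term contributes $0$, while the two pushforwards to/from $0$ contribute $\int_B |z|^2\, d(\nu_1-\nu_2)^+$ and $\int_B |z|^2\, d(\nu_1-\nu_2)^-$ respectively, which together give $\int_B |z|^2\, d|\nu_1-\nu_2|$.

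There is essentially no obstacle once the coupling is written down; the only minor point to mind is that $(\nu_1-\nu_2)^\pm$ may have infinite total mass near the origin, but this only causes the pushforwards to deposit infinite mass at $(0,\cdot)$ or $(\cdot,0)$, which is harmless since the admissibility condition constrains the marginals only on $B\setminus\{0\}$, and the finiteness of $\int_B|z|^2\,d|\nu_1-\nu_2|$ is automatic from $|\nu_1-\nu_2|\le \nu_1+\nu_2$ together with the uniform Lévy condition on $\nu_1,\nu_2$.
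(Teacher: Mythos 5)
Your proposal is correct and follows essentially the same route as the paper: both construct the coupling $\gamma = (\mathrm{id},\mathrm{id})_\#(\nu_1\wedge\nu_2) + (\mathrm{id},0)_\#(\nu_1-\nu_2)^+ + (0,\mathrm{id})_\#(\nu_1-\nu_2)^-$ (the paper writes $(Id\times P)_\#$ with $P\equiv 0$, which is the same map), verify admissibility via the marginals on $B\setminus\{0\}$, and read off the cost as $\int_B|z|^2\,d|\nu_1-\nu_2|$. Your additional remark on finiteness is a harmless refinement not needed for the inequality, which holds trivially if the right-hand side is infinite.
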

% ---------------------------------------------------------------------------------
\begin{proof}
We aim at constructing a particular admissible coupling between the measures $\nu_1$ and $\nu_2$ that yields the above bound. To this end, we keep the common mass between the two measures via the identity map $Id$ and transport mass from and onto the origin, via the projection function $P: B \to B$ given by $P(x) = 0$, for all $x\in B$. Given the Lévy measures $\nu_1, \nu_2 \in\mathcal M(B)$, we consider the 
the minimal measure $\nu_1 \wedge \nu_2$, defined by
$  \nu_1 \wedge \nu_2 = \nu_1 - (\nu_1-\nu_2)^+. $
Consider then the coupling  transport plan $\gamma : B\times B \to\R_+$ given by 
\[  \gamma = (Id \times Id)_\# (\nu_1 \wedge \nu_2) 
		 + (Id \times P)_\# (\nu_1-\nu_2)^+ 
		 + (P \times Id)_\# (\nu_1-\nu_2)^-,\]
which is admissible for the pair $(\nu_1,\nu_2)$.  
Indeed, for each  measurable subset $A \subseteq B^*$, we have
\begin{eqnarray*}
\pi^1_\# \gamma   \mid_{B^*} (A) 
	& = & \gamma((\pi^1)^{-1}(A) \cap (B^*\times B))  = \gamma(A\times B) 
   	 =  (\nu_1 \wedge \nu_2)((Id \times Id)^{-1}(A \times B)) \\
   	&&	+ (\nu_1-\nu_2)^+((Id \times P)^{-1}(A \times B))
	    + (\nu_1-\nu_2)^-((P \times Id)^{-1}(A \times B))\\
  	        &  = &  (\nu_1 \wedge \nu_2)(A) + (\nu_1-\nu_2)^+(A) + (\nu_1-\nu_2)^-(\emptyset)
                =  \nu_1(A).
\end{eqnarray*}
Similarly, we get $\pi^2_\#  \gamma\mid_{B^*}(A) = \nu_2(A)$  and $\gamma(\{(0,0)\})=0$. 
Thus, the transport plan $\gamma$ above is admissible. 
Computing with changes of variables, we obtain
\begin{align*}
\int_{B \times B} |z_1 - z_2|^2 d\gamma(z_1, z_2) 
	& =  \int_{B} |z|^2 (\nu_1-\nu_2)^+(dz) + \int_{B} |z|^2 (\nu_1-\nu_2)^-(dz) 
	 =  \int_{B} |z|^2 |\nu_1-\nu_2|(dz).
\end{align*}
Taking the infimum over all admissible couplings $\gamma$,  the conclusion follows.
\end{proof}

% ---------------------------------------------------------------------------------
\subsection{Assumptions}\label{subsec:assumptions}
% ---------------------------------------------------------------------------------

We now state the assumptions  on the family of Lévy measures and on the Hamiltonian.
We consider  a family of Lévy measures $\big(\nu_\xi\big)_{\xi\in\R^N}\subset\mathcal M(\R^N)$,  satisfying the following conditions:
\begin{itemize}
\item[(M1)] There exist a constant $C_\nu>0$ such that 
		\[ \sup_{\xi \in \R^N} \int_{\R^N} \min(1,|z|^2) \nu_{\xi}(dz) \leq C_\nu. \]
\item[(M2)] There exists $R_0>1$ such that, for all $R\ge R_0$, 
		\[ \sup_{\xi \in \R^N} \int_{B_R^c} \nu_{\xi}(dz) \leq o_{1/R}(1). \]	
\item[(M3)] For all $0<r< R$, there exists a modulus of continuity 
		$\omega_{R,r}: \R_+ \to \R_+$ so that, for all $x,y\in\R^N$, 
		\[|\nu_x - \nu_y|\left(B_R\setminus B_r\right)\le\omega_{R,r}(|x-y|).\]	
\item[(M4)] There exists $0<r_0<1$ such that, for all $0<r<r_0$ and for all $x,y\in\R^N$, 
        \[ W_2(\nu_x, \nu_y)(B_r) =o_r(1) |x - y|^{1/2}. \]
\end{itemize}

Assumption (M1) is a uniform Lévy condition, whereas (M2) is a uniform decay at infinity, which is a piece of interest when we deal with problems in $\R^N$.
The next two assumptions are the key points in this research and they are assumed to hold \emph{without any restriction on the order of the nonlocal operator.} Due to the singularity at the origin of the Lévy measure, we are often led to split the nonlocal operator into sets around the origin and away from the origin.
Assumption (M3) deals with the bounded part of the Lévy measures and it involves the total variation. Assumption (M4) deals with the singular part of the Lévy measures and it involves the Wasserstein distance. A common assumption would have been a unified continuity condition, expressed in terms of the total variation only:
\begin{itemize}
\item[(M)] For all $r>0$, and for all $x,y\in\R^N$, 
		\[\int_{B_r} \min(1, |z|^2) | \nu_{x}- \nu_{y}| (dz) \leq o_r(1)|x-y|.\]	
\end{itemize}
 The continuity condition (M4) with respect to the data has already appeared in the study of the regularity of solutions, see for example \cite{BCI11, BCCI12, BLT17}. Recently, Guillen, Mou and {S}wie\c ch showed in \cite{GMS19} that, within the study of comparison principles for integro-differential equation, by means of doubling of variables technique, the quadratic optimal transport distance naturally arises as a nonlocal Jensen-Ishii variant. In addition, the Wasserstein distance is to be considered only on small balls around the origin. This explains the coupling (M3)-(M4).

It is easy to see that a family of measures satisfying assumption (M) will satisfy (M3) and (M4).

% ---------------------------------------------------------------------------------
\begin{prop}\label{prop.weakTV}
Let $\big(\nu_\xi\big)_{\xi\in\R^N}\subset\mathcal M(\R^N)$ be a family of Lévy measures which satisfies $(M)$. 
Then  $(M3)$ and $(M4)$ hold.
\end{prop}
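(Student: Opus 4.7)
The plan is to derive (M3) and (M4) independently as direct consequences of (M). For (M4), I would invoke the $B_r$-analogue of Proposition \ref{prop.Wass-TV} --- the coupling constructed there from the Jordan decomposition of $\nu_1 - \nu_2$ adapts verbatim to any subball of $B$ --- to obtain, for every $r < 1$,
$$
W_2^2(\nu_x, \nu_y)(B_r) \leq \int_{B_r} |z|^2 \, |\nu_x - \nu_y|(dz).
$$
Since $|z|^2 = \min(1, |z|^2)$ on $B$, (M) gives $W_2^2(\nu_x, \nu_y)(B_r) \leq o_r(1)\, |x-y|$. Taking square roots and observing that $\sqrt{o_r(1)} \to 0$ as $r \searrow 0$ yields precisely (M4).

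For (M3), the key pointwise inequality is $\min(1, |z|^2) \geq \min(1, r^2)$ on the annulus $B_R \setminus B_r$. Dividing by $\min(1, r^2)$ and applying (M) on $B_R$ gives
$$
|\nu_x - \nu_y|(B_R \setminus B_r) \leq \frac{o_R(1)}{\min(1, r^2)}\, |x-y|.
$$
This linear bound alone is not yet a modulus of continuity: the quantity $o_R(1)$ is only known to be finite (not small) for a fixed $R$, so the estimate is unbounded in $|x-y|$. The remedy is (M1), which --- via the same lower bound on $\min(1,|z|^2)$ applied separately to $\nu_x$ and $\nu_y$ --- delivers the uniform cap $|\nu_x - \nu_y|(B_R \setminus B_r) \leq 2 C_\nu / \min(1, r^2)$. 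Taking the minimum of the linear estimate and this constant estimate produces a function $\omega_{R,r}(s)$ which is bounded, vanishes as $s \searrow 0$, and dominates $|\nu_x - \nu_y|(B_R \setminus B_r)$ for all $x,y$; this is (M3).

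The only nontrivial ingredient is this last upgrade from a linear-in-$|x-y|$ bound to a genuine modulus, which is exactly why (M1) must enter the proof of (M3): (M) is informative only on the singular side (small $r$), and the large-$R$ behavior of $|\nu_x-\nu_y|$ is unrelated to it and must be tamed by the uniform Lévy control. No other step is delicate, and no other obstacle is anticipated.
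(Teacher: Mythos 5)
Your argument for (M4) coincides with the paper's: both apply Proposition~\ref{prop.Wass-TV} (or its $B_r$-restricted version, which holds verbatim since the coupling built in that proof works on any $B_r$) and then invoke (M), using $\min(1,|z|^2)=|z|^2$ on $B_r$ for $r\le 1$.

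For (M3) your derivation of the linear estimate
$|\nu_x-\nu_y|(B_R\setminus B_r)\le \tfrac{o_R(1)}{\min(1,r^2)}\,|x-y|$
is correct and in fact slightly cleaner than the paper's, which splits $B_R\setminus B_r$ into $B\setminus B_r$ and $B_R\setminus B$ and applies (M) at $r=1$ and at $R$ separately (and only writes the case $r\le 1\le R$), whereas your single lower bound $\min(1,|z|^2)\ge\min(1,r^2)$ on the annulus treats all $0<r<R$ uniformly. However, your subsequent worry --- that ``this linear bound alone is not yet a modulus of continuity'' --- is unfounded. The paper's definition in the Notations subsection is explicit: $\omega:\R_+\to\R_+$ is a modulus of continuity provided $\omega(s)\to 0$ as $s\to 0$. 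There is no boundedness requirement, so a linear function $s\mapsto Cs$ is admissible, and indeed the paper's own proof stops at a bound of exactly that form, namely $\bigl(o_R(1)+o_1(1)/r^2\bigr)|x-y|$, and declares (M3) done. Your detour through (M1) to produce a bounded cap is therefore unnecessary (and, incidentally, slightly outside the stated hypothesis of the proposition, which assumes only (M), though (M1) is a standing assumption throughout the paper). Dropping that paragraph leaves you with a correct proof taking the same route as the paper.
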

% ---------------------------------------------------------------------------------
\begin{proof}
Writing (M) for $r=1$, we have for all $0<r\leq 1$,
\begin{eqnarray*}
  && \int_{B} |z|^2 | \nu_{x}- \nu_{y}| (dz)
  =  \int_{B_r} |z|^2 | \nu_{x}- \nu_{y}| (dz)+ \int_{B \setminus B_r} |z|^2 | \nu_{x}- \nu_{y}| (dz)
  \leq  o_1(1)|x-y|.
\end{eqnarray*}
Hence
\begin{eqnarray*}
  &&   \int_{B \setminus B_r}  | \nu_{x}- \nu_{y}| (dz)
  \leq   \frac{1}{r^2} \int_{B \setminus B_r}  |z|^2 | \nu_{x}- \nu_{y}| (dz)
  \leq \frac{o_1(1)}{r^2} |x-y|. 
\end{eqnarray*}
Therefore, for $0<r\leq 1\leq R$,
\begin{eqnarray*}
&&  | \nu_{x}- \nu_{y}|(B_R\setminus B_r)  =
  \int_{B \setminus B_r}  | \nu_{x}- \nu_{y}| (dz) +  \int_{B_R \setminus B}  | \nu_{x}- \nu_{y}| (dz)
  \leq \left( o_R(1)+  \frac{o_1(1)}{r^2} \right) |x-y|,
\end{eqnarray*}
where we have used (M) for $R>1$ to estimate the total variation on $B_R\setminus B$. This gives (M3).
Assumption (M4) follows from Proposition \ref{prop.Wass-TV} together with (M) for $r<1$.
\end{proof}

\begin{remark}{Assumptions (M3)-(M4) are, at least localy,  weaker than assumption (M). To  illustrate  this fact, we construct a family of measures for which (M3)-(M4) hold for $|x-y|$ small, but (M) does not. 
\rm  Let 
\[  \nu(dz) = \mathbf{1}_Q(z) |z|^{-(2 + \sigma)} dz, \quad \mbox{with} \ Q = \{ (x,y) \in \R^2 : x,y > 0 \} \subset \R^2. \]
For any point on the unit sphere $\mathrm S$, $x =   e^{i\theta_x} $ with $\theta_x \in(-\pi,\pi] $ consider the rotation $R_x:\R^2\to\R^2$ given by $R_x(z) = e^{i\sqrt{  |\theta_x| }}z$. 
Define the family of Lévy measures indexed with respect to points on the unit sphere, $\left( \nu_x\right)_{x\in S}$ given by $\nu_x = \left(R_x\right)_\# \nu$. Note that 
\[  \nu_x(dz) = \mathbf{1}_{Q_x}(z) |z|^{-(2 + \sigma)} dz, \quad \mbox{with} \ Q_x = \{ R_x(z) : z\in Q\} \subset \R^2. \]

In order to check $(M4)$ and evaluate the Wasserstein distance, we construct
$\gamma_{x,y}:= (R_x \times R_y)_\# \nu^r$ which is an admissible coupling in $\textrm{Adm}_{B_r}(\nu_x^r,\nu_y^r).$
Indeed, for $A \subseteq B_r \setminus \{ 0 \}$, we see that
\[  \gamma_{x,y}(B_r \times A)  = \nu^r ((R_x \times R_y)^{-1}(B_r \times A)) = \nu^r(R_x^{-1}(B_r)\cap R_y^{-1} A) 
= \nu^r(R_y^{-1} A) = (R_y)_\# \nu(A) =  \nu_{y}^r(A) \]
and similarly $\gamma_{x,y}(A\times B_r) =   \nu_{x}^r(A)$. Hence, we have, for $|x-y|$ small,
\begin{eqnarray*}
W_2^2(\nu_x, \nu_y)(B_r) &\leq & \int_{B_r \times B_r} |z_1 - z_2|^2 d\gamma_{x,y}(z_1, z_2) 
	=  \int_{B_r} |R_x(z) - R_y(z)|^2 d \nu(z)   \\
&\leq & 2(1 - \cos\left(|\theta_x|^{1/2} - |\theta_y|^{1/2}\right) \int_{Q \cap B_r} |z|^{-\sigma} d z \\
&\lessapprox & C r^{2-\sigma} \left|  |\theta_x|^{1/2} - |\theta_y|^{1/2}\right|^2 
	\leq C r^{2-\sigma} \left|  |\theta_x| - |\theta_y|\right|,
\end{eqnarray*}
where we have used that $1 - \cos(\alpha) \sim \alpha^2/2$ for $\alpha$ small. On the other hand, we have that
\[ | x- y | = \sqrt{2(1 - \cos\left(|\theta_x| - |\theta_y|\right)} \approx \left|  |\theta_x| - |\theta_y|\right|. \]
We conclude the existence of a constant $C > 0$ such that
\[ W_2(\nu_x, \nu_y)(B_r) \leq C r^{(2 - \sigma)/2} |x- y|^{1/2}. \]

We now check the total variation condition (M3). Let $x,y\in S$ and assume, without loss of generality, 
that $\theta_x>\theta_y>0$. Then, it holds by similar arguments, that 
\[ |\nu_{x} - \nu_y |(B_R \setminus B_r) 
 	= \int_{(B_R \setminus B_r) \cap (Q_x \Delta Q_y)} |z|^{-(2 + \sigma)} dz 
 	=  2  C \int_{r}^{R} \rho d\rho \int_{\sqrt\theta_y}^{\sqrt \theta_x} d\theta
	 \lessapprox C \left(r^{-\sigma} - R^{-\sigma} \right) |x - y|^{1/2}.\] 
On the other hand, we see that
\[ \int_{B_r} |z|^2 |\nu_x - \nu_y|(dz) 
	= \int_{B_r\cap Q_x \Delta Q_y} |z|^{-\sigma} dz 
	%=  2 \int_{0}^{r} \rho^{1 - \sigma} d\rho \int_{\sqrt\theta_y}^{\sqrt \theta_x } d\theta =
 	= \frac{r^{2 - \sigma}}{2-\sigma}  \left|\sqrt{\theta_x} - \sqrt\theta_y \right| 
	\gtrapprox C r^{2 - \sigma} |x - y|^{1/2},
\]
for some $C > 0$ small enough, but independent of $x$ or $y$. Thus, we do not have the Lipschitz condition (with respect to $|x - y|$) on the second momentum of the total variation.}

\end{remark}
\medskip

We finally make precise the assumptions on the Hamiltonian. We emphasize  that the key assumption is a superlinear growth in the gradient variable - which will dominate all of the other terms in the equation. 
Let $H:\R^N\times \R ^N\to \R$ satisfy the following:
\begin{itemize}
\item[(H0)] $H \in C(\R^N\times\R^N)$ and it satisfies 
		$\displaystyle \mathop{\rm sup}_{x\in\R^N}|H(x,0)| < +\infty.$
\item[(H1)] There exists $m>1$ and two moduli of continuity 
		$\omega^1_H,\omega^2_H: \R_+\to\R_+$ 
		such that, for all $x,y,p,q\in\R^N$, with $|q|\leq 1$,
		\[ H(y, p + q) - H(x, p) \leq 
			\omega_H^1(|x-y|)(1 + | p |^m) +
			\omega_H^2(| q |)(1 + | p |^{m - 1}). \]
\item[(H2)]  There exists $m>1$, some constants $b_m, b_0 >0$, $r_0>0$
		 and $\mu_0\in (0,1)$ such that for all $\mu\in[\mu_0,1]$
		 and $x,p\in \R^N$ with $|p|\ge r_0$, 
		\[ \mu H(x, \mu^{-1} p) - H(x,p) \ge (1-\mu) (b_m | p |^m - b_0).\]	
\end{itemize}

The first condition is typical in establishing the existence of solutions, by means of Perron's method.
Assumption (H1) is a classical continuity condition on the data in this context, and (H2) expresses
the superlinear coercivity property of $H$. The constant $m\in\R$ in assumptions (H1)-(H2) plays the role of the power of the gradient nonlinearity, and $m>1$ means that we consider superlinear Hamiltonians. More precisely, we have the following estimate, whose proof can be found in~\cite{BCT19}.

\begin{lemma}\label{lem.suplinH}
Let $H$ be a Hamiltonian satisfying assumptions (H0)-(H1). 
Then, for every $R>0$,
\begin{equation*}
	\mathop{\rm sup}_{x\in\R^N, |p|\leq R} |H(x,p)| =: C_R <+\infty.
\end{equation*}
If, in addition, (H2) holds, then there exists a constant $C>0$ (depending on the data from the assumptions) and $r_0>0$ such that, for all $x,p\in\R^N$ and $|p|\geq r_0$,
\begin{equation*}
	H(x,p) \geq C |p|^m - \frac1C |p|.
\end{equation*}
\end{lemma}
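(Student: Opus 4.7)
The boundedness of $|H|$ on $\R^N \times \ovl{B_R}$ would follow from a chain argument combining (H0) and (H1). Setting $y = x$ in (H1) (with the standard convention $\omega_H^1(0)=0$), I obtain
\[
|H(x, p+q) - H(x, p)| \le \omega_H^2(|q|)\bigl(1 + (|p|+1)^{m-1}\bigr), \quad |q| \le 1,
\]
the reverse direction coming from swapping the roles of $p$ and $p+q$ (i.e.\ writing $p = (p+q) + (-q)$). For any $p_0$ with $|p_0|\le R$, I split $p_0 = \sum_{k=1}^n q_k$ with $|q_k|\le 1$ and $n=\lceil R\rceil$, telescope the above displayed inequality along the intermediate points (which stay in $\ovl{B_R}$), and conclude via the uniform bound on $|H(x,0)|$ from (H0). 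This produces a bound depending only on $R$, $\omega_H^2(1)$, $m$ and $\sup_x |H(x,0)|$.

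\textbf{Second assertion.} I would iterate (H2) with the fixed choice $\mu = \mu_0$. After the change of variable $q = \mu_0^{-1}p$, (H2) reads, for $|q|\ge r_0/\mu_0$ (with $r_0$ as in (H2)),
\[
H(x, q) \ge \mu_0^{-1} H(x, \mu_0 q) + (1 - \mu_0)\mu_0^{m-1} b_m |q|^m - \mu_0^{-1}(1-\mu_0) b_0.
\]
Given such $q$, let $K\ge 1$ be the largest integer with $|\mu_0^K q|\ge r_0$; then $|\mu_0^K q|\in[r_0, r_0/\mu_0)$ and $\mu_0^{-K}\le |q|/r_0$. Iterating the inequality $K$ times produces three contributions: a boundary term $\mu_0^{-K}H(x, \mu_0^K q)$, bounded in absolute value by $C_{r_0/\mu_0}\,|q|/r_0$ thanks to the first assertion; an accumulated $|q|^m$-contribution with coefficient $(1-\mu_0)\mu_0^{m-1} b_m \sum_{j=0}^{K-1}\mu_0^{j(m-1)}$, bounded below by the positive constant $(1-\mu_0)\mu_0^{m-1} b_m$ uniformly in $K$ since $\mu_0^{m-1}<1$; and a residual of $b_0$-terms collapsing to $-b_0(\mu_0^{-K}-1)$, again linear in $|q|$. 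Summing these pieces yields $H(x,q) \ge C|q|^m - C^{-1}|q|$ for $|q|\ge r_0/\mu_0$, which plays the role of the threshold $r_0$ in the statement (after relabelling).

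\textbf{Main obstacle.} The delicate point is the bookkeeping in the iteration: two competing geometric sums appear, one with ratio $\mu_0^{m-1}<1$ (providing the $|q|^m$ growth) and one with ratio $\mu_0^{-1}>1$ (producing the linear-in-$|q|$ residues and the boundary contribution). The superlinearity $m>1$ is used precisely here to keep the former bounded below uniformly in $K$, whereas the latter stays of order $\mu_0^{-K}\asymp |q|$ and is absorbed into the $-C^{-1}|q|$ correction. The first assertion enters in an essential way to control the boundary term $H(x,\mu_0^K q)$, which is evaluated on the intermediate ball of radius $r_0/\mu_0$ and would otherwise be arbitrary.
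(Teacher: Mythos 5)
The paper itself does not include a proof of this lemma, deferring to~\cite{BCT19}; your argument is nonetheless correct and is the natural one. For the first assertion, telescoping (H1) with $x=y$ along intermediate points in $\ovl{B_R}$ and anchoring at $p=0$ via (H0) gives the local bound $C_R$ (note that even without assuming $\omega_H^1(0)=0$, the extra term $\omega_H^1(0)(1+|p|^m)$ is uniformly bounded on $\ovl{B_R}$, so the conclusion survives). For the second, your iteration of (H2) at $\mu=\mu_0$ is handled carefully: the geometric sum with ratio $\mu_0^{m-1}<1$ yields a uniform positive coefficient for $|q|^m$, the $b_0$-sum telescopes exactly to $-b_0(\mu_0^{-K}-1)$, and the boundary term $\mu_0^{-K}H(x,\mu_0^K q)$ is controlled by $C_{r_0/\mu_0}\,|q|/r_0$ via the first assertion together with the bound $\mu_0^{-K}\leq |q|/r_0$ — all three pieces are of the required form. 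The only point worth stating explicitly is that $|q|\ge r_0/\mu_0$ guarantees $K\ge 1$ and that every application of (H2) in the chain is at a point $q_j=\mu_0^j q$ with $|q_j|\ge r_0/\mu_0$, which you implicitly ensured by your choice of $K$.
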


%---------------------------------------------------------------------------------
\subsection{Main results}
%---------------------------------------------------------------------------------

The main contribution of this paper is a comparison result between \emph{discontinuous viscosity sub- and supersolutions}, for elliptic integro-differential equations coupled with a superlinear coercive Hamiltonian of the form~\eqref{eq:nHJ}. We state below the definitions for equation \eqref{eq:nHJ}, used throughout the paper (see \cite{BI08}, for several equivalent definitions). 

\begin{definition}[Viscosity solution]$\;$
 \begin{itemize}
 \item[(i)] A function $u\in USC(\R^N)\cap L^{\infty}(\R^N)$ 
    is {\emph a viscosity subsolution} of \eqref{eq:nHJ} iff, for any test function
    $\phi\in C^2(B)\cap L^\infty(\R^N)$, if $\ovl x$ is a local maximum
    of  $u-\phi$ in $B_\delta$, with $\delta>0$, then
	\[ \lambda u(\ovl x ) - \I[B_\delta](\ovl{x},  \phi) - 
        \I[B_\delta^c](\ovl{x}, D\phi(\ovl x), u) 
	+ H(\ovl{x}, D\phi(\ovl x))  \leq  0. \]
 \item[(i)] A function $u\in LSC(\R^N)\cap L^{\infty}(\R^N)$ 
    is {\emph a viscosity supersolution} of \eqref{eq:nHJ} iff, for any test function  
    $\phi\in C^2(B)\cap L^\infty(\R^N)$, if $\ovl x$ is a local minimum
    of  $u-\phi$ in $B_\delta$, with $\delta>0$, then
	\[ \lambda u(\ovl x ) - \I[B_\delta](\ovl{x},  \phi) - 
        \I[B_\delta^c](\ovl{x}, D\phi(\ovl x), u) 
	+ H(\ovl{x}, D\phi(\ovl x))  \geq  0. \]
 \item[(iii)] A function $u\in C(\R^N)\cap L^{\infty}(\R^N)$ 
    is {\emph a viscosity solution} of \eqref{eq:nHJ} iff it is both a viscosity subsolution and a viscosity supersolution.
 \end{itemize}
\end{definition}

The main result of this paper is the following comparison principle. 

% --------------------------------------------------------------------------------------------
\begin{theorem}[Comparison Principle] \label{thm.comparison}
Let $\lambda >0$, $(\nu_\xi)_{\xi\in\R^N}\subset \mathcal M(\R^N)$ be a family of Lévy measures associated with the nonlocal operators $\left(\mathcal I_\xi(\cdot)\right )_{\xi\in\R^N}$ satisfying assumptions  (M1) - (M4), and let $H$ be a Hamiltonian with superlinear growth, satisfying assumptions (H1)-(H2). 

Then, the comparison principle holds:  if $u\in USC(\R^N)\cap L^\infty(\R^N)$ is a viscosity subsolution of \eqref{eq:nHJ} and  $v\in LSC(\R^N)\cap L^\infty(\R^N)$ is a viscosity supersolution of \eqref{eq:nHJ}, 
then $u\leq v \text{ in } \R^N$.
\end{theorem}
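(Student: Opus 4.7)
The plan is a doubling-of-variables argument by contradiction, using the standard ``$\mu$-trick'' to turn the superlinear coercivity of $H$ into enough room to absorb every nonlocal and continuity error. Assume for contradiction that $M_0 := \sup_{\R^N}(u-v) > 0$. By (H2), for every $\mu \in (\mu_0, 1)$ the rescaled function $w := \mu u$ is a viscosity subsolution of
\begin{equation*}
\lambda w - \I_x w + H(x, Dw) + (1-\mu)\big(b_m |Dw|^m - b_0\big) \leq 0,
\end{equation*}
and for $\mu$ close enough to $1$ one still has $\sup(w-v) > 0$. The extra coercive term $(1-\mu) b_m |Dw|^m$ will be the engine that dominates all error terms.

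Since the problem is posed on $\R^N$, I would localize by adding a mild sublinear penalty to the quadratic, considering
\begin{equation*}
\Psi_{\eps,\eta}(x,y) = w(x) - v(y) - \eps^{-2}|x-y|^2 - \eta\big(\sqrt{1+|x|^2}+\sqrt{1+|y|^2}\big),
\end{equation*}
and extracting a maximizer $(\ovl x, \ovl y)$. For $\eps,\eta$ small, standard estimates give $\eps^{-2}|\ovl x-\ovl y|^2 \to 0$, $\eta(\sqrt{1+|\ovl x|^2}+\sqrt{1+|\ovl y|^2}) \to 0$, and $\Psi_{\eps,\eta}(\ovl x,\ovl y) \to \sup(w-v)$. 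Writing the viscosity inequalities at $\ovl x$ and $\ovl y$ with the natural test functions produces the ``viscosity gradient'' $p_\eps := 2\eps^{-2}(\ovl x-\ovl y)$, up to $O(\eta)$ corrections from the localizer.

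The heart of the matter is to control the difference of the two nonlocal terms. I would split the integration domains into $B_r$, $B_R\setminus B_r$, and $B_R^c$. For $B_R^c$, assumption (M2) together with $\|u\|_\infty,\|v\|_\infty<\infty$ makes the contribution $o_R(1)$. For $B_R\setminus B_r$, assumption (M3) combined with the $L^\infty$ bounds yields a contribution bounded by $(1+|p_\eps|)\,\omega_{R,r}(|\ovl x-\ovl y|)$. The delicate region is $B_r$: there, the maximum-point property applied to the shifted pair $(\ovl x+z_1, \ovl y+z_2)$ rearranges into
\begin{equation*}
\big[w(\ovl x+z_1)-w(\ovl x)-p_\eps\cdot z_1\big] - \big[v(\ovl y+z_2)-v(\ovl y)-p_\eps\cdot z_2\big] \leq \eps^{-2}|z_1-z_2|^2
\end{equation*}
for $(z_1,z_2)\in B_r\times B_r$. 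Integrating against any admissible coupling $\gamma \in \mathrm{Adm}_{B_r}(\nu_{\ovl x}^r, \nu_{\ovl y}^r)$, reducing the left-hand side to single-variable integrals via Proposition~\ref{prop.split.f+g}, taking the infimum over $\gamma$ on the right, and invoking (M4), I obtain
\begin{equation*}
\I[B_r](\ovl x, p_\eps, w) - \I[B_r](\ovl y, p_\eps, v) \leq \eps^{-2}\, W_2^2(\nu_{\ovl x}, \nu_{\ovl y})(B_r) \leq o_r(1)\,|p_\eps|.
\end{equation*}

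Combining these three nonlocal bounds with (H1) for the Hamiltonian difference $H(\ovl x, p_\eps)-H(\ovl y, p_\eps)$, and inserting everything into the subtracted viscosity inequalities, I arrive at
\begin{equation*}
\lambda\big(w(\ovl x)-v(\ovl y)\big) + (1-\mu) b_m |p_\eps|^m \leq \big(\omega_H^1(|\ovl x-\ovl y|)+o_r(1)\big)\big(1+|p_\eps|^m\big) + o_R(1) + o_\eta(1) + (1-\mu) b_0.
\end{equation*}
For $\mu$ fixed and $|\ovl x-\ovl y|,\,r$ small enough, the $|p_\eps|^m$ on the right is absorbed into the coercive term, yielding a uniform bound $|p_\eps|\leq C(\mu)$. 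With $p_\eps$ bounded, letting $\eps\to 0$, then $\eta\to 0$, then $r\to 0$ and $R\to \infty$ collapses the right-hand side and produces $\lambda\sup(w-v)\leq 0$, contradicting $\sup(w-v)>0$; finally $\mu\to 1$ closes the argument. I expect the main obstacle to be the Wasserstein estimate on $B_r$: reducing the difference of nonlocal operators at two different base points to a single-variable Wasserstein quantity requires handling the drift correction $p_\eps\cdot z$ (whose support is not away from the origin) carefully and delivers a bound only linear in $|p_\eps|$, so the coercivity of order $m>1$ harvested from the $\mu$-trick is essential to close the loop against a nonlocal operator of \emph{arbitrary} order.
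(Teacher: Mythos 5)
Your overall strategy is the same as the paper's: the $\mu$-rescaling of $u$ to exploit (H2), doubling of variables with a localizer, a scale decomposition of the nonlocal term ($B_r$, $B_R\setminus B_r$, $B_R^c$), the coupling/Wasserstein estimate near the origin, the uniform bound on the viscosity gradient, and the iterated limits. However, there are two genuine gaps.

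First, your localizer $\eta\big(\sqrt{1+|x|^2}+\sqrt{1+|y|^2}\big)$ is unbounded and only Lipschitz at infinity, so the nonlocal term $\I[B^c](\ovl y, \cdot)$ applied to it involves $\int_{B^c} |z|\,\nu_{\ovl y}(dz)$, which is \emph{not} finite under (M1)--(M2) alone (those only give $\nu_\xi(B^c)<\infty$ with a decay rate, not integrability of $|z|$ at infinity). The paper uses a compactly supported, bounded $\psi_\beta$ precisely so that $\I_\xi[B^c](x,\psi_\beta)$ is controlled via $\|\psi_\beta\|_\infty$, (M1) and (M2) (Lemma~\ref{lem.localization}). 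With your localizer the viscosity inequality may not even be writable.

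Second, and more centrally, your key $B_r$ estimate $\I[B_r](\ovl x, p_\eps, w) - \I[B_r](\ovl y, p_\eps, v) \leq \eps^{-2}\,W_2^2(\nu_{\ovl x},\nu_{\ovl y})(B_r)$ is not legitimate as written. For a merely semicontinuous $w$, the integrand $w(\ovl x+z)-w(\ovl x)-p_\eps\cdot z$ is only $O(|z|)$ near $0$, so $\I[B_r](\ovl x, p_\eps, w)$ is generally an ill-defined (divergent) integral against a Lévy measure. You cannot simply integrate the maximum-point inequality on $B_r\times B_r$ and then ``reduce to single-variable integrals via Proposition~\ref{prop.split.f+g}'': that proposition requires the one-variable functions to be supported away from the origin, which yours are not. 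You flag this as the main obstacle, but the proposal does not resolve it, and resolving it is where the real work lies. The paper's resolution is essential: keep the $B_\delta$/$B_\delta^c$ split forced by the viscosity definition (inside $B_\delta$ the smooth test function gives a finite $\eps^{-2}o_\delta(1)$ contribution; Lemma~\ref{lem.est-NL-in}); then on the annulus $A_{\rho,\delta}=B_\rho\setminus B_\delta$ cut the coupling integral into the regions $A_{\rho,\delta}\times A_{\rho,\delta}$, $A_{\rho,\delta}\times B_\delta$ and $B_\delta\times A_{\rho,\delta}$ so that each piece involves a function supported away from the origin (Lemma~\ref{lem.est-NL-crown}, with the cross-terms estimated via $\eps^{-2}\int|z|^2\nu(dz)$ over $B_\delta$ and Proposition~\ref{lem.Gigli}); then bound $\ovl p$ and only afterwards refine with the annulus $B\setminus B_\rho$ controlled by (M3) and $|\ovl p|$ (Lemma~\ref{lem.est-NL-crown-2}). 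This decomposition, and the order in which $\delta\to 0$, then $\rho$ is fixed, then $\eps,\beta\to 0$, is the substance missing from your sketch.
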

% --------------------------------------------------------------------------------------------

Following Perron's method introduced for viscosity solutions by Ishii in \cite{Ishii87}, one can establish the existence of discontinuous solutions for problem \eqref{eq:nHJ}, where integro-differential terms are linearly coupled with coercive Hamiltonians. However, in order to start Perron's iteration it is necessary to assume the boundedness assumption (H0). This, coupled with the comparison result above leads to the following well-possedness result for problem \eqref{eq:nHJ}.

% --------------------------------------------------------------------------------------------
\begin{theorem}[Well-possedness] \label{thm.well-posed}
Let  $\lambda >0$, $(\nu_\xi)_{\xi\in\R^N} \subset \mathcal M(\R^N)$ be a family of Lévy measures satisfying assumptions (M1) - (M4), and $H$ be a Hamiltonian with superlinear growth, satisfying assumptions (H0)-(H2). Then, there exists a bounded continuous viscosity solution of \eqref{eq:nHJ}.
\end{theorem}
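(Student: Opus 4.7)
The plan is to apply Perron's method in the Ishii/Barles--Imbert form adapted to nonlocal equations, using the comparison principle (Theorem~\ref{thm.comparison}) as the black box that turns a pair of semicontinuous envelopes into a continuous solution. The argument splits naturally into: construction of global bounded barriers, setup of the Perron supremum, verification of the sub/supersolution property of its envelopes, and conclusion by comparison.

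First I would build trivial constant barriers. By assumption (H0), there exists $C_0>0$ with $|H(x,0)|\le C_0$ for all $x$. Since $\I_\xi[\R^N](x, 0, c) = 0$ for any constant $c$ (the increments $c(x+z)-c(x)$ vanish pointwise), the constants $u^+(x) := C_0/\lambda$ and $u^-(x):=-C_0/\lambda$ are respectively a bounded classical supersolution and subsolution of \eqref{eq:nHJ}. With these barriers in hand, define
\[
u(x) := \sup\bigl\{\,w(x)\; :\; u^-\le w\le u^+,\ w\in USC(\R^N),\ w\text{ is a viscosity subsolution of \eqref{eq:nHJ}}\,\bigr\}.
\]
The set is nonempty (it contains $u^-$) and uniformly bounded by $C_0/\lambda$, so $u$ is well defined and bounded.

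Next I would show that the upper semicontinuous envelope $u^*$ is itself a subsolution and the lower semicontinuous envelope $u_*$ is a supersolution. The first part is the standard stability under supremum for viscosity subsolutions; in the nonlocal framework this relies on the fact that if $\bar x$ is a strict local maximum of $u^*-\phi$ and $w_n$ is an approximating sequence of subsolutions, then local maxima $\bar x_n$ of $w_n-\phi$ converge to $\bar x$ and the nonlocal terms pass to the limit by dominated convergence using (M1) together with the $L^\infty$ bounds on $w_n$ (this is precisely the form of stability formulated in~\cite{BI08}). The second part is the nonlocal \textbf{bump lemma}: if $u_*$ failed to be a supersolution at some $\bar x$, then one can construct a smooth bump $\psi$, supported in a small ball $B_\delta(\bar x)$, such that $\max(u,u_*+\eta\psi)$ is still a subsolution below $u^+$ and strictly larger than $u$ at $\bar x$, contradicting the definition of $u$. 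The nonlocal twist here is that the perturbation changes the values of the test function both inside and, through the integral term, outside of $B_\delta(\bar x)$, and one must check that the added contribution of $\psi$ to $\I_x[B_\delta^c](\bar x, D\phi, \cdot)$ can be absorbed by the strict inequality; this is done by taking $\eta$ small and $\delta$ small and using (M1)--(M2) together with the continuity of $H$ in $p$ given by (H1).

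Finally I would close the argument with comparison. By construction $u_*\le u\le u^*$. Theorem~\ref{thm.comparison} applied to $u^*$ (subsolution) and $u_*$ (supersolution), both bounded, yields $u^*\le u_*$ on $\R^N$. Hence $u_* = u = u^*$, which means $u\in C(\R^N)\cap L^\infty(\R^N)$ and is simultaneously a viscosity subsolution and supersolution of \eqref{eq:nHJ}, i.e.\ a viscosity solution, as required. The step I expect to be delicate is not the construction of barriers or the final comparison step, but the nonlocal bump construction: one has to produce a perturbation whose increment is controlled globally (because of the nonlocal term) while still beating the strict supersolution defect at the test point. Once this is set up using (M1)--(M2) and (H0)--(H1), the rest of the argument is essentially a transcription of Ishii's method~\cite{Ishii87} in the Barles--Imbert~\cite{BI08} nonlocal framework.
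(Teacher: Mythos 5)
Your proposal is correct and follows exactly the route the paper indicates: the paper does not write out a detailed proof but simply invokes Perron's method in the Ishii/Barles--Imbert nonlocal form together with Theorem~\ref{thm.comparison}, which is precisely what you flesh out, with the constant barriers $\pm C_0/\lambda$ coming from (H0) and the vanishing of $\I_\xi$ on constants. The details you supply (stability under supremum, the nonlocal bump lemma using (M1)--(M2), and the final squeeze $u_* \le u \le u^*$ combined with comparison) are the standard ingredients the paper implicitly delegates to \cite{Ishii87} and \cite{BI08}.
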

% --------------------------------------------------------------------------------------------

% =================================================================================
\section{Proof of the comparison principle (Theorem~\ref{thm.comparison}) } \label{sec:main-proof}
% =================================================================================

The proof is divided into several technical lemmas, which correspond to the main steps. It relies, as usual, on the doubling of variables technique, and fine estimates of the terms coming from the viscosity inequalities. We focus on the most difficult and new part, which concerns fine estimates on the nonlocal terms. 

% --------------------------------------------------------------------------------------------
%\begin{proof}[Proof of Theorem \ref{thm.comparison}]
Let $u\in USC(\R^N)\cap L^\infty(\R^N)$ be a bounded viscosity subsolution of \eqref{eq:nHJ} and $v\in LSC(\R^N)\cap L^\infty(\R^N)$ be a bounded viscosity supersolution of \eqref{eq:nHJ}. In order to show that $u\leq v$ in $\R^N$, we argue by contraction and assume that 
\[ M := \sup_{x\in\R^N}\big( u(x) -v(x)\big) >0.\]
In order to enhance the superlinear growth of the Hamiltonian, given by assumption (H2), we multiply the subsolution $u$ by a scaling parameter $\mu\in(0,1)$ and define, for all $x\in\R^N$, $\ovl{u}(x) = \mu u(x)$. Then, for $\mu$ sufficiently close to $1$, we will have that
\[ \ovl M := \sup_{x\in\R^N}\big( \ovl u(x) -v(x)\big) >0.\]
To ensure that a close supremum is to be attained, it is necessary to add a localization function $\psi_\beta$, defined as in Lemma \ref{lem.localization}, with a proper choice of constant $c>0$, so that, for $\beta$ sufficiently small, 
\[ M_{\beta} := \sup_{x\in\R^N}\left( \ovl u(x) -v(x)  - \psi_\beta(x) \right) >0.\]
We finally double the variables and take $\eps>0$ sufficiently small in order that
\[ M_{\eps,\beta} := \sup_{x,y\in\R^N}\left( \ovl u(x) -v(y) - \frac{|x -  y|^2}{\eps^2} - \psi_\beta(y) \right) >0.\]
In the sequel, we denote the function over which the supremum is taken by
\[ \phi_{\eps,\beta}(x,y):= \ovl u(x) -v(y) - \frac{|x -  y|^2}{\eps^2} - \psi_\beta(y).\]

Several technical and useful results about the behaviour of the maxima points and maxima values used throughout the proof, are summed up in Lemma \ref{lem.max}. 

% --------------------------------------------------------------------------------------------
\begin{lemma}\label{lem.max} 
Under the notations above, assume $M>0$. Then the following assertions hold.
\begin{enumerate}[label=(\roman*)]
\item For any $\mu\in(0,1)$, the function $\ovl{u} = \mu u \in USC(\R^N)\cap L^\infty(\R^N)$ 
	is a bounded viscosity subsolution of the following partial integro-differential equation
	\[\lambda \ovl{u}- \I_x \ovl u(x) + \mu H(x, \mu^{-1}D\ovl{u}) = 0 \quad  \mbox{in} \quad \R^N.\]
\item Let $ c =  2(||u|| _\infty + ||v || _\infty)$ in equation ~\eqref{psi_func}
	defining the localization function $\psi$. 
   	If $\mu \in (0,1)$ is sufficiently close to $1$, and $\beta\in (0,1)$ is small enough, then 
	\[ M_{\eps, \beta}\geq M_\beta\ge \frac{\ovl{M}}{2}\geq \frac{M}{4} > 0. \]
\item The supremum $M_{\eps, \beta}$ is achieved at some 
	$(x_{\eps, \beta}, y_{\eps, \beta}) \in {\ovl B_{C\eps + 2/\beta} \times \ovl B_{2/\beta}}$
	with $C>0$ a constant depending only on  $||u||_\infty$ and $||v||_\infty$.
\item For fixed $\beta\in(0,1)$,
	$\displaystyle  \frac{|  x_{\eps, \beta} - y_{\eps, \beta} | ^2}{\eps^2}  = o_\eps^\beta(1),$
         and uniformly in $\beta>0$
	$\displaystyle |  {x}_{\eps,\beta} -  y_{\eps,\beta} | = o_\eps(1).$
\item For fixed $\beta$, $M_{\eps, \beta} \rightarrow M_{\beta} $ as $\eps \searrow 0$, and in turn, 
	$M_{\beta} \rightarrow \ovl{M}$ as $\beta \searrow 0$.
\end{enumerate}
\end{lemma}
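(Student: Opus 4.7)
The plan is to verify each of (i)--(v) in turn, since each relies on the previous ones together with standard bookkeeping from the doubling-of-variables framework.

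For (i), the argument is a routine scaling. If $\phi \in C^2(B)\cap L^\infty(\R^N)$ and $\ovl x$ is a local maximum of $\ovl u - \phi$ on some $B_\delta$, then $\ovl x$ is also a local maximum of $u - \mu^{-1}\phi$ on $B_\delta$. I would apply the viscosity subsolution inequality for $u$ with test function $\mu^{-1}\phi$ and multiply through by $\mu>0$. The nonlocal terms $\I[B_\delta](\ovl x,\mu^{-1}\phi)$ and $\I[B_\delta^c](\ovl x,\mu^{-1}D\phi(\ovl x),u)$ are linear in their function arguments, so multiplication by $\mu$ restores $\I[B_\delta](\ovl x,\phi)$ and $\I[B_\delta^c](\ovl x,D\phi(\ovl x),\ovl u)$, leaving behind the modified Hamiltonian $\mu H(\ovl x,\mu^{-1}D\phi(\ovl x))$.

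For (ii) and (iii), once $c = 2(\|u\|_\infty + \|v\|_\infty)$ is fixed, I would first observe that $\|\ovl u - u\|_\infty = (1-\mu)\|u\|_\infty \to 0$ as $\mu\to 1$, so $\ovl M \geq M/2$ for $\mu$ close enough to $1$. Choosing a near-maximizer of $\ovl u - v$ and invoking the locally small behaviour of $\psi_\beta$ from Lemma~\ref{lem.localization} gives $M_\beta \geq \ovl M/2$ for $\beta$ small, while $M_{\eps,\beta} \geq M_\beta$ is immediate by testing with $x=y$. For the localization in (iii), the crucial inequality is
\[ \frac{|x_{\eps,\beta} - y_{\eps,\beta}|^2}{\eps^2} + \psi_\beta(y_{\eps,\beta}) \leq \ovl u(x_{\eps,\beta}) - v(y_{\eps,\beta}) \leq \|u\|_\infty + \|v\|_\infty = c/2, \]
where the first estimate uses $M_{\eps,\beta}\geq 0$. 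Combined with the growth $\psi_\beta(y) \gtrsim \beta |y|$ supplied by Lemma~\ref{lem.localization}, this yields both $|y_{\eps,\beta}|\leq 2/\beta$ and $|x_{\eps,\beta} - y_{\eps,\beta}|\leq C\eps$ with $C$ depending only on $\|u\|_\infty+\|v\|_\infty$, and hence $|x_{\eps,\beta}|\leq 2/\beta + C\eps$. Upper semicontinuity of $\phi_{\eps,\beta}$ and the coercivity contributed by $\psi_\beta$ guarantee that the supremum is attained.

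For (iv) and (v), the argument is classical. Writing $\phi_{\eps,\beta}(x_{\eps,\beta},y_{\eps,\beta}) \geq \phi_{\eps,\beta}(y_{\eps,\beta},y_{\eps,\beta})$ and rearranging gives
\[ M_{\eps,\beta} + \frac{|x_{\eps,\beta} - y_{\eps,\beta}|^2}{\eps^2} = \ovl u(x_{\eps,\beta}) - v(y_{\eps,\beta}) - \psi_\beta(y_{\eps,\beta}). \]
For fixed $\beta$, the bound $|y_{\eps,\beta}|\leq 2/\beta$ from (iii) allows the extraction of a subsequence $y_{\eps,\beta}\to\ovl y$; since $|x_{\eps,\beta}-y_{\eps,\beta}|\to 0$, also $x_{\eps,\beta}\to\ovl y$. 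Taking $\limsup$ of the right-hand side, using upper semicontinuity of $\ovl u$ and lower semicontinuity of $v$, bounds it above by $\ovl u(\ovl y) - v(\ovl y) - \psi_\beta(\ovl y) \leq M_\beta$. Since $M_{\eps,\beta}\geq M_\beta$, this forces both $M_{\eps,\beta}\to M_\beta$ and $|x_{\eps,\beta}-y_{\eps,\beta}|^2/\eps^2 \to 0$, which is the $o_\eps^\beta(1)$ claim in (iv) and the first statement in (v). The $\beta$-uniform estimate $|x_{\eps,\beta}-y_{\eps,\beta}|=o_\eps(1)$ follows directly from the $\beta$-independent bound $|x-y|\leq C\eps$ extracted in (iii). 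Finally, $M_\beta \to \ovl M$ as $\beta\to 0$ follows by testing against a maximizing sequence for $\ovl M$ and using $\psi_\beta\to 0$ pointwise.

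These are standard bookkeeping lemmas in the viscosity framework with no conceptual difficulty; the only point requiring care is to distinguish in (iv) the $\beta$-dependent convergence (which needs the bound $|y_{\eps,\beta}|\leq 2/\beta$ from (iii) to extract a limit point and exploit semicontinuity of $\ovl u, v$) from the $\beta$-uniform convergence (which uses only the raw penalization bound coming from the boundedness of $u$ and $v$). Keeping careful track of which parameters each $o$-symbol depends on is essential for the subsequent use of this lemma in the full comparison proof.
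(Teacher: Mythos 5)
Your argument is correct and covers all five items; in outline it matches the paper's proof, with a mild reorganization of (iv)--(v). Two points are worth flagging.

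First, you write that $\psi_\beta(y) \gtrsim \beta|y|$ is ``supplied by Lemma~\ref{lem.localization}.'' This is not what the localization function does: $\psi_\beta$ is a bounded cutoff with $\|\psi_\beta\|_\infty \le c$, not a function of linear growth. The property you actually need (and which the definition \eqref{psi_func} and Lemma~\ref{lem.localization} do give) is the threshold behavior $\psi_\beta(y) = c > \|u\|_\infty + \|v\|_\infty$ whenever $|y| \ge 2/\beta$. That threshold, combined with your inequality $\psi_\beta(y_{\eps,\beta}) \le \|u\|_\infty + \|v\|_\infty$, directly forces $|y_{\eps,\beta}| < 2/\beta$, which is what you use. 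So the conclusion is right, but the cited growth property should be replaced by the threshold property.

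Second, in (iv) the paper takes a slightly different route: it first observes that $\eps \mapsto M_{\eps,\beta}$ is monotone and bounded, hence convergent, and then uses the ``$2\eps$ trick'' ($M_{2\eps,\beta} \ge \phi_{2\eps,\beta}(x_{\eps,\beta},y_{\eps,\beta}) = M_{\eps,\beta} + \tfrac{3}{4\eps^2}|x_{\eps,\beta}-y_{\eps,\beta}|^2$) to get the $o_\eps^\beta(1)$ bound directly, before (v) establishes that the limit equals $M_\beta$ via semicontinuity. You instead prove $M_{\eps,\beta} \to M_\beta$ first by extracting a subsequential limit point and invoking semicontinuity, and extract $\tfrac{1}{\eps^2}|x_{\eps,\beta}-y_{\eps,\beta}|^2 \to 0$ as a byproduct. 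Your route is fine but should state the routine ``every subsequence has a further subsequence converging to $M_\beta$, so the full sequence converges'' step explicitly, since the compactness argument a priori only gives convergence along a subsequence; the paper's monotonicity observation sidesteps this entirely and is a little cleaner to cite downstream.
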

% --------------------------------------------------------------------------------------------
\begin{proof}[Proof of Lemma \ref{lem.max}]
(i) In view of assumption (H2), the proof is immediate and identical to the local case, 
	due to the fact that the nonlocal term is linear with respect to $u$.
	
(ii) Note that $M \leq ||u || _\infty + ||v|| _\infty$, and
	\[ \ovl u(x) - v(x) = (u-v)(x) + (\mu - 1)u(x) \ge u(x) - v(x) - (1-\mu)|| u|| _\infty.\]
	Thus, taking supremum over all $x\in\R^N$, it follows that, for $\mu$ sufficiently close to $1$,
	\[ \ovl M \ge M - (1-\mu) | u| _\infty \ge \frac{M}{2}.\]
	The localization function $\psi_\beta$, defined as in Lemma \ref{lem.localization} 
	with $ c =  2(||u|| _\infty + ||v || _\infty)$, satisfies
	\begin{eqnarray*}
	 \psi_\beta(x)  > ||  u || _\infty + ||  v || _\infty && \text{ when } |  x |  \ge 2/\beta,\\
	 \psi_\beta(x)  = 0 & & \text{ when }  | x | \le 1/\beta.
	\end{eqnarray*}
	Let $\hat x\in \R^N$ be such that  $\ovl u(\hat x) - v(\hat x) \ge \ovl M/2$. 
	Then, for $\beta$ sufficiently small, $\psi_\beta (\hat x) = 0$ and 
	\[  \ovl M\ge  M_\beta \ge  \ovl u(\hat x) - v(\hat x) \ge \frac{\ovl M}{2}.\] 
	The supremum $M_{\eps, \beta}$, satisfies, for all $x\in\R^N$,
	\[ M_{\eps, \beta} \ge \phi_{\eps,\beta}(x,x) =  \ovl u(x) -v(x)- \psi_\beta(x),\]
	which implies, taking the supremum over all $x\in\R^N$ that
	$ \displaystyle M_{\eps, \beta}  \ge M_\beta \ge \frac{\ovl M}{2}.$
	
(iii) Note first that, for $y\in\R^N$ such that $|  y |  \ge 2/\beta$, we have
	$\psi_\beta(y) > ||  u || _\infty +  ||  v || _\infty$, and hence 
	\[\phi_{\eps,\beta}(x,y) =  \ovl u(x) - v(y) - \frac{|  x - y |^2 }{\eps^2}  - \psi_\beta(y) 
		\leq || u|| _\infty +  || v || _\infty  - \psi_\beta(y) < 0. \]
	Therefore the supremum is taken over $\R^N\times \ovl B_{2/\beta}$. 
	On the other hand, by the positivity of $M_{\eps,\beta}$, it follows that,
	 for any sequence $(x^n_{\eps,\beta},y^n_{\eps,\beta})\in\R^N\times \ovl B_{2/\beta}$ 
	 for which the sequence of values is converging to the maxima, i.e.
	 $ \phi_{\eps,\beta} (x^n_{\eps,\beta},y^n_{\eps,\beta}) \to M_{\eps,\beta}$, as $\eps\to 0$,
	we have
	\[ \frac{|  x^n_{\eps,\beta} - y^n_{\eps,\beta} |^2 }{\eps^2} \leq 
		\ovl u(x^n_{\eps,\beta} ) - v(y^n_{\eps,\beta}) - \psi_\beta(y^n_{\eps,\beta}) 
		\leq ||  u|| _\infty + ||  v|| _\infty = : C^2. \]
	This further implies that the sequence is bounded
	\[ |  x^n_{\eps,\beta} | \ \le 
	 	|  x^n_{\eps,\beta} - y^n_{\eps,\beta}|  +
		|  y^n_{\eps,\beta} |  \leq C\eps +2/ \beta,\]
	and hence there exists a subsequence converging to a point 
	$(x_{\eps,\beta}, y_{\eps,\beta})\in {\ovl B_{C\eps + 2/\beta} \times \ovl B_{2/\beta}}$, 
	as $n\to\infty$. In view of the upper semi-continuity of the function $\phi_{\eps,\beta}$, 
	it follows that the supremum $M_{\eps,\beta}$ is attained at $(x_{\eps,\beta}, y_{\eps,\beta})$.	
	
(iv)	It is immediate to see that, for fixed $\beta>0$, the sequence $M_{\eps,\beta}$ is monotone 
	with respect to $\eps>0$. Indeed, if $\eps \le \eps'$, then 
	$\phi_{\eps,\beta} (x,y) \le \phi_{\eps',\beta}(x,y)$, for all $x,y\in\R^N$, which in turn implies 
	\[M_\beta \le M_{\eps,\beta} \le M_{\eps',\beta}.\]
 	Therefore the sequence $\left( M_{\eps, \beta} \right)_{\eps}$ is decreasing as $\eps\searrow0$
	and bounded from below, so there exists $\widetilde M_\beta \ge M_\beta$ such that 
	\[ \lim_{\eps \rightarrow 0} M_{\eps, \beta} =  \widetilde{M}_\beta. \]
	On the other hand,  note that 
	\[ M_{2\eps, \beta} \geq \Phi_{2\eps,\beta}(x_{\eps,\beta}, y_{\eps,\beta})
	 	= M_{\eps,\beta} + \dfrac{3}{4\eps^2} |  {x}_{\eps,\beta} -  y_{\eps,\beta} | ^2, \]
	which implies that
	\[ \dfrac{3}{4\eps^2} |  {x}_{\eps,\beta} -  y_{\eps,\beta} | ^2 \leq M_{2\eps, \beta} - M_{\eps, \beta} \leq C^2.\]
	It follows from the convergence of $\left( M_{\eps, \beta} \right)_{\eps}$ as $\eps\searrow 0$  that
	\[ \dfrac{1}{\eps^2} |  {x}_{\eps,\beta} -  y_{\eps,\beta} |^2 = o_\eps^\beta(1) 
	\quad \text{ and } \quad 
	|  {x}_{\eps,\beta} -  y_{\eps,\beta} | = o_\eps(1). \]
	
(v) Note that, in view of (iii), the sequence $\left ({x}_{\eps,\beta},  y_{\eps,\beta} \right)_{\eps}$ is bounded,  
	and, in view of point (iv), there exists  $x_\beta \in \R^N$ such that, 
	up to a subsequence, ${x}_{\eps,\beta},  y_{\eps,\beta} \rightarrow x_\beta$ as $\eps\to 0$. 
	Since $u \in USC(\R^N)$ and $v \in LSC(\R^n)$, it follows that
	\begin{align*}
		& \limsup_{\eps\to 0} \left( \ovl{u}(x_{\eps,\beta}) -  v(y_{\eps,\beta}) \right) \leq
		 \ovl{u}(x_\beta) -v(x_\beta).
	\end{align*}
	This leads to 
	\begin{eqnarray*}
	M_{\beta} \leq  \widetilde M_\beta 
		& =  & \lim_{\eps\to 0}  M_{\eps,\beta}
		   =   \lim_{\eps\to 0} \phi_{\eps,\beta}({x}_{\eps,\beta}, y_{\eps,\beta})\\	   
		& \leq &\limsup_{\eps\to 0}\left(\ovl{u}(x_{\eps,\beta})-v(y_{\eps,\beta})-\psi_\beta(y_{\eps,\beta})\right)\\
		& \leq & \ovl{u}(x_\beta)- v(x_\beta)-\psi_\beta (x_\beta)
	\leq M_{\beta}.
	\end{eqnarray*}
	In conclusion, $\lim_{\eps \rightarrow 0} M_{\eps, \beta} = M_{\beta}$.
	In order to pass now $\beta\to 0$, recall that  $M_{\beta} \leq \ovl{M}$ for every $\beta > 0$.
	From the definition of $\ovl M$, it follows that, for any $\eta > 0$, there exists $x_{\eta} \in \R^N$ such that 
	\[ \ovl{u}(x_{\eta}) - v(x_{\eta}) > \ovl{M} - \eta. \]
	From the definition of $M_{\beta}$, we infer that
	\[ M_{\beta} \geq \ovl{u}(x_{\eta}) - v(x_{\eta}) - \psi_{\beta}(x_{\eta}) 
		\geq \ovl{M} - \eta - \psi_{\beta}(x_{\eta}).\]
	Choosing $\beta$ sufficiently small, we have $|  x_{\eta} |  < 1/\beta$, so that 
	$\psi_\beta(x_\eta) = 0$. 
	Therefore, we have
	\[ \ovl M \geq  M_\beta \geq  \ovl M   - \eta. \]
	This implies precisely that  $\lim_{\beta \rightarrow 0} M_{\beta} = \ovl M$.
\end{proof}

In view of~Lemma \ref{lem.max}, for $\mu$ sufficiently close to $1$ and $\beta$ sufficiently small, the supremum 
$M_{\eps,\beta}$ of the function $\phi_{\eps,\beta}$ is attained at some point $(x_{\eps,\beta},y_{\eps,\beta})$. To simplify the notation we hereafter drop the dependence on $\eps,\beta$ both on the function, simply denoted 
$\phi := \phi_{\eps,\beta}$, and on the maxima points, denoted by
$(\ovl x,\ovl y) : = (x_{\eps,\beta}, y_{\eps,\beta})$. 
We also introduce the simplified notation for the vectors
\[ \displaystyle \ovl p : = 2 \frac{\ovl{x}_{\eps, \beta} - \ovl y_{\eps, \beta}}{\eps^2}\quad \text{ and } \quad
   \displaystyle \ovl q : = - D\psi_\beta(y_{\eps,\beta}). \]
However, we bear in mind their dependence on $\eps,\beta$, which is employed at the end of the proof.

Thus, let $(\ovl x,\ovl y) $ be a global maximum point of the function $\phi$. Then, for all $x,y\in \R^N$, it holds
\begin{equation}\label{eq:max-ineq}
 	\ovl u(\ovl x) - v(\ovl y) - \frac{|\ovl x -  \ovl y|^2}{\eps^2} - \psi_\beta(\ovl y) \geq
 	\ovl u(x)        - v(y)       - \frac{|x -  y|^2}{\eps^2}               - \psi_\beta(y).
\end{equation}
Define, for fixed $\ovl y\in \R^N$, respectively for fixed $\ovl x\in\R^N$, the smooth functions 
\begin{eqnarray*}
	\varphi_{\ovl y}(x)   :=  \;  \frac{|x -  \ovl y|^2}{\eps^2},\quad
	\varphi_{\ovl x}(y)   :=  -  \frac{|\ovl x -  y|^2}{\eps^2} - \psi_\beta(y).
\end{eqnarray*}
In view of inequality~\eqref{eq:max-ineq}, it follows that $\ovl x$ is a global maximum point for $\ovl u - \varphi_{\ovl y} $ and $\ovl y$ is a global minimum point for $ v -\varphi_{\ovl x}$. This, together with Lemma~\ref{lem.max}(i), lead to the following viscosity inequalities for $\ovl u \in USC(\R^N)\cap L^\infty(\R^N)$ and $v\in LSC(\R^N)\cap L^\infty(\R^N)$ respectively:
\begin{eqnarray*}
\lambda \ovl{u}(\ovl{x}) 
	- \I[B_\delta](\ovl{x},  \varphi_{\ovl y}) - \I[B_\delta^c](\ovl{x}, \ovl{p}, \ovl{u}) 
	+ \mu H(\ovl{x},\mu^{-1}\ovl{p}) & \leq & 0, \\
\lambda v(\ovl y) 
	- \I[B_\delta](\ovl y, \varphi_{\ovl x}) - \I[B_\delta^c](\ovl y, \ovl{p} + \ovl{q}, v) 
	+ H(\ovl y, \ovl{p} + \ovl{q}) & \geq & 0.
\end{eqnarray*}
Subtracting the two inequalities, we obtain
\begin{eqnarray}
 \lambda (\ovl{u}(\ovl{x}) - v(\ovl y))  
 	& + & 	\mu H(\ovl{x}, \mu^{-1}\ovl{p}) -  H(\ovl y, \ovl{p} + \ovl{q}) \nonumber \\ 
	& \leq & 	\I[B_\delta](\ovl{x},  \varphi_{\ovl y})  - \I[B_\delta](\ovl y, \varphi_{\ovl x})  + 
			\I[B_\delta^c](\ovl{x}, \ovl{p}, \ovl{u}) - \I[B_\delta^c](\ovl y, \ovl{p} + \ovl{q}, v).
\label{eq:visc-ineq}
\end{eqnarray}
Choosing $\beta > 0$ small enough, we notice, in view of Lemma~\ref{lem.max}(ii), that
\begin{equation}\label{eq:est-fct}
	  \lambda (\ovl{u}(\ovl{x}) - v(\ovl y)) \geq \lambda M_{\eps, \beta}
          \geq \lambda M_{\beta} \geq \lambda  \frac{\ovl M}{2} \geq \lambda \frac{M}{4}.
\end{equation}
In the following, we estimate the difference terms appearing in inequality \eqref{eq:visc-ineq}.

% --------------------------------------------------------------------------------------------
\begin{lemma}[Estimate for the Hamiltonian difference]\label{lem.est-H}
Let $H$ satisfy assumptions (H1) and (H2). 
Then, for $\mu$ sufficiently close to $1$, it holds that
\begin{equation}\label{eq:est-H}
\mu H(\ovl{x},\mu^{-1}\ovl{p}) -  H(\ovl y, \ovl{p} + \ovl{q}) 
\geq \frac12(1 - \mu)b_m |\ovl{p} |^m - o_{\beta}(1) - o_{\eps}(1) - \lambda \frac{M}{8}.
\end{equation}
\end{lemma}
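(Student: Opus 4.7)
The natural move is to insert $H(\ovl x, \ovl p)$ as a pivot, splitting the difference into
\[
\mu H(\ovl{x},\mu^{-1}\ovl{p}) - H(\ovl y, \ovl{p} + \ovl{q})
= \underbrace{\big[\mu H(\ovl x,\mu^{-1}\ovl p) - H(\ovl x,\ovl p)\big]}_{(I)}
+ \underbrace{\big[H(\ovl x,\ovl p) - H(\ovl y,\ovl p+\ovl q)\big]}_{(II)}.
\]
The first summand captures the superlinear ``gain'' produced by the $\mu$-scaling, and the second measures the continuity failure. The strategy is to extract a coercive $|\ovl p|^m$ term from $(I)$ via assumption (H2), control $(II)$ via (H1), and then absorb the error terms into the gain.

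First I would treat the regime $|\ovl p|\geq r_0$. Applying (H2) with $\mu\in[\mu_0,1)$ gives directly $(I)\geq (1-\mu)(b_m|\ovl p|^m - b_0)$. Next, for $\beta$ small the vector $\ovl q = -D\psi_\beta(\ovl y)$ satisfies $|\ovl q|\leq 1$ (with $|\ovl q|=o_\beta(1)$), so (H1) applies and yields
\[
(II) \geq -\omega^1_H(|\ovl x-\ovl y|)\bigl(1+|\ovl p|^m\bigr) - \omega^2_H(|\ovl q|)\bigl(1+|\ovl p|^{m-1}\bigr).
\]
Using the elementary bound $|\ovl p|^{m-1}\leq 1+|\ovl p|^m$ to consolidate the $|\ovl p|$-growth, the sum becomes
\[
\mu H(\ovl x,\mu^{-1}\ovl p) - H(\ovl y,\ovl p+\ovl q) \geq \bigl[(1-\mu)b_m - \omega^1_H(|\ovl x-\ovl y|) - 2\omega^2_H(|\ovl q|)\bigr]|\ovl p|^m - (1-\mu)b_0 - \omega^1_H(|\ovl x-\ovl y|) - 2\omega^2_H(|\ovl q|).
\]

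The main technical obstacle is that $|\ovl p|^m$ is \emph{not} a priori bounded as $\eps\to 0$, so the negative coefficient of $|\ovl p|^m$ must be made smaller than the coercive gain. Here I would invoke the standard hierarchy of smallness: first freeze $\mu$ close enough to $1$ so that $(1-\mu)b_0\leq \lambda M/8$; then choose $\beta$ small enough that $2\omega^2_H(|\ovl q|)\leq (1-\mu)b_m/4$ (possible since $|\ovl q|=o_\beta(1)$); finally, by Lemma~\ref{lem.max}(iv), $|\ovl x-\ovl y|=o_\eps(1)$ \emph{uniformly in} $\beta$, so for $\eps$ small one has $\omega^1_H(|\ovl x-\ovl y|)\leq (1-\mu)b_m/4$. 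With these choices the coefficient of $|\ovl p|^m$ is at least $(1-\mu)b_m/2$, and the remaining additive error splits as $-\lambda M/8 - o_\beta(1) - o_\eps(1)$, giving the desired \eqref{eq:est-H}.

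It remains to dispose of the regime $|\ovl p|\leq r_0$, where (H2) is not available. In that case $\mu^{-1}\ovl p$ and $\ovl p+\ovl q$ lie in a fixed ball, so Lemma~\ref{lem.suplinH} bounds both Hamiltonians by a constant $C_{r_0}$ depending only on the data. On the other hand, the claimed right-hand side is bounded above by $\frac12(1-\mu)b_m r_0^m - \lambda M/8$ plus vanishing terms, which is readily made smaller than $-2C_{r_0}$ by taking $\mu$ close to $1$. Hence the inequality holds trivially in this regime, and the two cases together complete the proof.
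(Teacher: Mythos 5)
Your main argument, for the regime $|\ovl p|\geq r_0$, is essentially the paper's: same pivot $H(\ovl x,\ovl p)$, same use of (H2) for the scaling gain and (H1) for the continuity error, same hierarchy of smallness (fix $\mu$, then $\beta$, then $\eps$). The one cosmetic deviation is that you consolidate the $|\ovl p|^{m-1}$ term via $|\ovl p|^{m-1}\leq 1+|\ovl p|^m$, whereas the paper uses Young's inequality, which yields the slightly sharper coefficient $\omega_H^2(|\ovl q|)^{m/(2(m-1))}$ on $|\ovl p|^m$; both versions are $o_\beta(1)$ so both close the argument. You are also right to flag that (H2) only applies when $|\ovl p|\geq r_0$, a hypothesis the paper's proof invokes silently.

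However, your treatment of the residual case $|\ovl p|\leq r_0$ does not work. You argue that the right-hand side of \eqref{eq:est-H} can be made $<-2C_{r_0}$ by sending $\mu\to 1$, but in that limit (with $\eps,\beta\to 0$) the right-hand side tends to the \emph{fixed} value $-\lambda M/8$, not to $-\infty$; there is no reason to have $\lambda M/8\geq 2C_{r_0}$, and indeed $M$ may be arbitrarily small. The correct way to close this case is to show that the Hamiltonian difference on the left is itself $o_{1-\mu}(1)$ uniformly in $|\ovl p|\leq r_0$, and then absorb it into the $-\lambda M/8$ slack. Concretely, write $\mu H(\ovl x,\mu^{-1}\ovl p)-H(\ovl x,\ovl p)=\mu\big[H(\ovl x,\mu^{-1}\ovl p)-H(\ovl x,\ovl p)\big]+(\mu-1)H(\ovl x,\ovl p)$; the first bracket is bounded below by $-\omega_H^2\big((\mu^{-1}-1)r_0\big)\big(1+(r_0/\mu_0)^{m-1}\big)$ using (H1) with $x=y=\ovl x$, $p=\ovl p$, $q=(\mu^{-1}-1)\ovl p$, and the second term is bounded below by $-(1-\mu)C_{r_0}$ via Lemma~\ref{lem.suplinH}. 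Both vanish as $\mu\to 1$, and the (H1) estimate on $H(\ovl x,\ovl p)-H(\ovl y,\ovl p+\ovl q)$ is $-o_\eps(1)-o_\beta(1)$ since $|\ovl p|\leq r_0$. Adding the harmless negative quantity $\frac12(1-\mu)b_m\big(|\ovl p|^m-r_0^m\big)$ and choosing $\mu$ close enough to $1$ so that the total $o_{1-\mu}(1)$ error plus $\frac12(1-\mu)b_m r_0^m$ is at most $\lambda M/8$ then gives \eqref{eq:est-H} in this regime as well.
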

% --------------------------------------------------------------------------------------------

\begin{proof}[Proof of Lemma \ref{lem.est-H}]
In view of Lemma \ref{lem.localization}, we have $|\ovl q |= |D\psi_\beta (\ovl y) |\leq C_0\beta$. Hence, choosing $\beta$ sufficiently small, we have $|\ovl q |\le 1$. In view of assumptions (H1) and (H2) it follows that, for $\mu$ sufficiently close to $1$, and for $\eps>0$ small,
\begin{eqnarray*}
\mathcal{H} 
	& : = &   \left( \mu H(\ovl{x}, \mu^{-1}\ovl{p}) - H(\ovl{x}, \ovl{p}) \right)
		  + \left( H(\ovl{x}, \ovl{p}) - H(\ovl y, \ovl{p} + \ovl{q}) \right) \\	 
	&\geq & (1 - \mu) \big( b_m |\ovl{p} |^m - b_0\big) 
		  - \omega_H^1\big(|\ovl{x} - \ovl y |\big) \; \big(1 + |\ovl{p} |^m\big) 
		  - \omega_H^2\big(|\ovl{q} |\big)	\;\big(1 + |\ovl{p} |^{m - 1}\big) \\
	& =  &  \left( (1 - \mu) b_m - \omega_H^1\big(|\ovl{x} - \ovl y |\big) \right) |\ovl{p} |^m 
		- \omega_H^2\big(|\ovl{q} |\big)	\;\big(1 + |\ovl{p} |^{m - 1}\big)
		- \omega_H^1(|\ovl{x} - \ovl y |) - (1 - \mu)b_0.
	\end{eqnarray*}
By Young's inequality, we have
\[\omega_H^2(|\ovl{q} |) |\ovl{p} |^{m - 1}
		\leq  \frac1m \omega_H^2(|\ovl{q} |)^{m/2} + 
		\frac{m-1}{m}\omega_H^2(|\ovl{q} |)^{m/(2(m - 1))}  |\ovl{p} |^m , \]
which, up to modifying the moduli of continuity by a constant depending on $m$, gives 
\begin{eqnarray*}
\mathcal{H} 
	&\geq & \left( (1 - \mu) b_m - \omega_H^1\big(|\ovl{x} - \ovl y |\big) 
						  - \omega_H^2(|\ovl{q} |)^{m/(2(m - 1))}  \right) |\ovl{p} |^m \\ 
	&&	 -  \omega_H^2(|\ovl{q} |)^{m/2} -  \omega_H^2(|\ovl{q} |) 
		 - \omega_H^1(|\ovl{x} - \ovl y |) - (1 - \mu)b_0.
	\end{eqnarray*}
Let $\mu$ be sufficiently close to $1$ so that $(1 - \mu)b_0 \leq \lambda M/8. $
In view of Lemma \ref{lem.max}(iv),  $|\ovl{x} - \ovl y |\rightarrow 0$ as $\eps \searrow 0$ and $|\ovl{q} |\rightarrow 0$ as $\beta \searrow 0$. Consequently,  fix $\eps, \beta > 0$ small enough so that
\[ (1 - \mu)b_m  - \omega_H^1(|\ovl{x} - \ovl y |)   
			- \omega_H^2(|\ovl{q} |)^{m/(2(m - 1))} 
   \geq \frac12  (1 - \mu) b_m> 0. \]
Hence, we attain the following estimate
\begin{eqnarray*}
\mathcal{H} 
	& \geq & \frac12(1 - \mu)b_m |\ovl{p} |^m 
		-  \omega_H^2(|\ovl{q} |)^{m/2} -  \omega_H^2(|\ovl{q} |) 
		 - \omega_H^1(|\ovl{x} - \ovl y |) 
		 - \lambda \frac{M}{8} \\
	& = & \frac12(1 - \mu)b_m |\ovl{p} |^m - o_{\beta}(1) - o_{\eps}(1) - \lambda \frac{M}{8}.
\end{eqnarray*}		 
\end{proof}

% --------------------------------------------------------------------------------------------
\begin{lemma}[Estimate of the Nonlocal difference inside $B_\delta$]\label{lem.est-NL-in}
Let $(\nu_\xi)_{\xi\in\R^N}$ be a family of Lévy measures satisfying assumption (M1). Then
\begin{equation}\label{eq:est-NL-in}
 \I[B_\delta](\ovl{x},  \varphi_{\ovl y})  - \I[B_\delta](\ovl y, \varphi_{\ovl x})
 	 \leq \frac{1}{\eps^2}  o_\delta^{\ovl x,\ovl y}(1)  + o_\beta(1).
\end{equation}
\end{lemma}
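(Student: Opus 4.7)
The strategy is a direct computation. Both test functions $\varphi_{\ovl y}(x) = |x - \ovl y|^2/\eps^2$ and $\varphi_{\ovl x}(y) = -|\ovl x - y|^2/\eps^2 - \psi_\beta(y)$ are explicit and smooth, so each of the two nonlocal terms on the left can be written in closed form, up to a second-order Taylor remainder of $\psi_\beta$. The estimate will then follow from integrability of $|z|^2$ near the origin (via (M1)) for the quadratic part, and from the smallness of the Hessian of $\psi_\beta$ (via Lemma~\ref{lem.localization}) for the localization part.

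First, plugging $\varphi_{\ovl y}$ into the definition of $\I[B_\delta](\ovl x,\cdot)$, using $D\varphi_{\ovl y}(\ovl x) = \ovl p$ and expanding $|\ovl x + z - \ovl y|^2 = |\ovl x - \ovl y|^2 + 2(\ovl x - \ovl y)\cdot z + |z|^2$, the linear terms cancel and
\[ \I[B_\delta](\ovl x, \varphi_{\ovl y}) = \frac{1}{\eps^2}\int_{B_\delta} |z|^2\,\nu_{\ovl x}(dz). \]
The symmetric computation for $\varphi_{\ovl x}$ at $\ovl y$, with $D\varphi_{\ovl x}(\ovl y) = \ovl p + \ovl q$, produces both the corresponding quadratic piece and the $\psi_\beta$ contribution:
\[ \I[B_\delta](\ovl y, \varphi_{\ovl x}) = -\frac{1}{\eps^2}\int_{B_\delta}|z|^2\,\nu_{\ovl y}(dz) - \int_{B_\delta}\bigl[\psi_\beta(\ovl y + z) - \psi_\beta(\ovl y) - D\psi_\beta(\ovl y)\cdot z\bigr]\,\nu_{\ovl y}(dz). \]
Subtracting, the statement reduces to bounding
\[ \frac{1}{\eps^2}\Bigl(\int_{B_\delta}|z|^2\nu_{\ovl x}(dz) + \int_{B_\delta}|z|^2\nu_{\ovl y}(dz)\Bigr) \ + \ \int_{B_\delta}\bigl[\psi_\beta(\ovl y + z) - \psi_\beta(\ovl y) - D\psi_\beta(\ovl y)\cdot z\bigr]\,\nu_{\ovl y}(dz). \]

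For the first bracket, assumption (M1) ensures $\int_B|z|^2\,\nu_\xi(dz)<\infty$ uniformly in $\xi$, hence by dominated convergence applied to each of the fixed measures $\nu_{\ovl x}$, $\nu_{\ovl y}$, both quadratic integrals tend to $0$ as $\delta\searrow 0$ at a rate that may depend on $\ovl x,\ovl y$; this is precisely the $\eps^{-2}\,o_\delta^{\ovl x,\ovl y}(1)$ contribution. For the second bracket, a second-order Taylor expansion of $\psi_\beta\in C^2$ together with the bound on $\|D^2\psi_\beta\|_\infty$ provided by Lemma~\ref{lem.localization} (which vanishes as $\beta\searrow 0$, in line with the $|D\psi_\beta|\leq C_0\beta$ estimate already used in Lemma~\ref{lem.est-H}) yields $|\psi_\beta(\ovl y+z)-\psi_\beta(\ovl y)-D\psi_\beta(\ovl y)\cdot z|\leq o_\beta(1)\,|z|^2$. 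Invoking (M1) once more,
\[ \biggl|\int_{B_\delta}\bigl[\psi_\beta(\ovl y+z)-\psi_\beta(\ovl y)-D\psi_\beta(\ovl y)\cdot z\bigr]\,\nu_{\ovl y}(dz)\biggr| \leq o_\beta(1)\int_{B_\delta}|z|^2\,\nu_{\ovl y}(dz) \leq C_\nu\,o_\beta(1), \]
uniformly in $\delta$ and $\eps$. Adding the two estimates gives the announced bound.

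There is no serious obstacle here: the argument is a bookkeeping exercise made painless by the fact that the integrands are smooth and explicit on $B_\delta$ (no interaction with $u$ or $v$, contrary to the nonlocal term on $B_\delta^c$, which is the genuinely hard one). The only point requiring attention is to keep the two error pieces cleanly separated, so that they survive the iterated limits ($\delta\to 0$, then $\eps\to 0$, then $\beta\to 0$) performed later in the comparison proof. This is automatic above: the $\delta$-bound is allowed to depend on $\ovl x,\ovl y$ since $\delta$ is sent to zero first with $\eps,\beta$ frozen, while the $\beta$-bound is free of both $\delta$ and $\eps$.
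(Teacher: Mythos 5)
Your proof is correct and follows essentially the same route as the paper: explicit expansion of the quadratic test functions so that the linear terms cancel, leaving $\eps^{-2}\int_{B_\delta}|z|^2\nu_{\ovl x}(dz)$ and $\eps^{-2}\int_{B_\delta}|z|^2\nu_{\ovl y}(dz)$ plus the $\psi_\beta$-remainder, the first two handled by (M1) and absolute continuity of the integral (hence $o_\delta^{\ovl x,\ovl y}(1)$), the last by the $\|D^2\psi_\beta\|_\infty\le C_0\beta^2$ bound from Lemma~\ref{lem.localization} and (M1) again. The sign bookkeeping for $\varphi_{\ovl x}$ is handled correctly, and the observation that the $\beta$-error is uniform in $\delta$ and $\eps$ matches what the later limiting argument requires.
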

% --------------------------------------------------------------------------------------------

\begin{proof}[Proof of Lemma \ref{lem.est-NL-in}]
Direct computations give, in view of assumption (M1) and of Lemma~\ref{lem.localization}
\begin{eqnarray*}
\; \I[B_\delta](\ovl{x},  \varphi_{\ovl y}) 
	& = & \; \int_{B_\delta} \left(\varphi_{\ovl y} (\ovl x+z) - \varphi_{\ovl y} (\ovl x)- 
		D\varphi_{\ovl y} (\ovl x)\cdot z \right) \nu_{\ovl x}(dz) \\
	& = & \; \frac{1}{\eps^2}\int_{B_\delta} \left( |\ovl x +z - \ovl y|^2 - |\ovl x - \ovl y|^2 - 
		2 (\ovl x - \ovl y)\cdot z \right) \nu_{\ovl x}(dz)
	 =  \frac{1}{\eps^2}\int_{B_\delta}|z|^2\nu_{\ovl x}(dz) 
	 \leq  {\frac{1}{\eps^2} o^{\ovl x}_\delta(1)}, \\
- \I[B_\delta](\ovl{y},  \varphi_{\ovl x}) 
	& = & - \int_{B_\delta} \left(\varphi_{\ovl x} (\ovl y+z) - \varphi_{\ovl x} (\ovl y)- 
		D\varphi_{\ovl x} (\ovl y)\cdot z \right) \nu_{\ovl y}(dz) \\
	& = & \frac{1}{\eps^2}\int_{B_\delta} \left( |\ovl x - \ovl y - z|^2 - |\ovl x - \ovl y|^2 + 
		2 (\ovl x - \ovl y)\cdot z \right) \nu_{\ovl y}(dz) + \I[B_\delta](\ovl{y},  \psi_\beta) 
	%& \leq & \frac{1}{\eps^2}\int_{B_\delta}|z|^2\nu_{\ovl y}(dz) +  \I[B](\ovl{y},  \psi_\beta)
	 \leq  {\frac{1}{\eps^2} o^{\ovl y}_\delta(1) + C \beta^2} .
\end{eqnarray*}
Summing up, we have
\[ \I[B_\delta](\ovl{x},  \varphi_{\ovl y})  - \I[B_\delta](\ovl y, \varphi_{\ovl x})
 	 \le \frac{1}{\eps^2} o_\delta^{\ovl x,\ovl y}(1)   +  o_\beta(1).\]
\end{proof}

% --------------------------------------------------------------------------------------------
\begin{lemma}[Estimate of the nonlocal difference outside $B$]\label{lem.est-NL-out}
Let $(\nu_\xi)_{\xi\in\R^N}$ be a family of Lévy measures satisfying assumption (M1).
Then, for  any $R>1$, the following holds
\begin{equation}\label{eq:est-NL-out}
 	\I[B^c](\ovl{x}, \ovl{u}) - \I[B^c](\ovl y, v) 	
		\leq 	2 \| u \|_\infty {|\nu_{\ovl x} - \nu_{\ovl y}|(B_R\setminus B)}+ o^{\ovl x, \ovl y}_{1/R}(1)+ 
			{ o^{\ovl y}_\beta(1)}.
\end{equation}
If, in addition, assumptions (M2) and (M3) hold, then 
%for all $R>1$
%\begin{equation*}
%	\I[B^c](\ovl{x}, \ovl{u}) - \I[B^c](\ovl y, v) \leq 
% 	o_\eps^R(1) + o_{1/R}(1) + o_{\beta}(1).
%\end{equation*}
%In particular, 
for $\eps>0$ small enough, 
\begin{equation}\label{eq:est-NL-out-unif}
	\I[B^c](\ovl{x}, \ovl{u}) - \I[B^c](\ovl y, v) \leq 
 	\lambda  \frac{M}{16} + o_{\beta}(1).
\end{equation}
\end{lemma}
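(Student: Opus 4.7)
My approach is to treat both inequalities via the same decomposition of the difference; the second estimate is then obtained by refining each piece with (M2)--(M3). Since $\1_B(z)=0$ on $B^c$, the gradient terms in the Lévy form vanish and
\[\mathcal N := \I[B^c](\ovl x,\ovl u) - \I[B^c](\ovl y,v)
 = \int_{B^c}\!\bigl[\ovl u(\ovl x+z)-\ovl u(\ovl x)\bigr]\nu_{\ovl x}(dz)
 - \int_{B^c}\!\bigl[v(\ovl y+z)-v(\ovl y)\bigr]\nu_{\ovl y}(dz).\]
I would decompose $\mathcal N = J_1 + J_2$ by adding and subtracting $\int_{B^c}[\ovl u(\ovl x+z)-\ovl u(\ovl x)]\,\nu_{\ovl y}(dz)$, so that $J_1$ carries the measure mismatch against the bounded increments of $\ovl u$, while $J_2$ carries the coupled increments of $\ovl u - v$ against the single measure $\nu_{\ovl y}$.

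For $J_1$, splitting $B^c = (B_R\setminus B)\cup B_R^c$ and using $\|\ovl u\|_\infty \le \|u\|_\infty$ yields $2\|u\|_\infty|\nu_{\ovl x}-\nu_{\ovl y}|(B_R\setminus B)$ on the first piece; on the second piece the integrand is bounded, and by (M1) (via dominated convergence applied to each fixed measure) the tails $\nu_{\ovl x}(B_R^c)+\nu_{\ovl y}(B_R^c)$ are an $o_{1/R}^{\ovl x,\ovl y}(1)$. For $J_2$, the crucial observation is that the maximum inequality \eqref{eq:max-ineq} evaluated at $(x,y) = (\ovl x+z,\ovl y+z)$ preserves the quadratic penalty $|\ovl x-\ovl y|^2/\eps^2$, so that
\[(\ovl u(\ovl x+z) - v(\ovl y+z)) - (\ovl u(\ovl x)-v(\ovl y)) \le \psi_\beta(\ovl y+z) - \psi_\beta(\ovl y).\]
Applying $|D\psi_\beta|\le C_0\beta$ from Lemma \ref{lem.localization} on $B_R\setminus B$ bounds that part by $C_0\beta R\,C_\nu$, an $o_\beta^R(1)$; on $B_R^c$, the boundedness of $\psi_\beta$ combined with (M1) yields an $o_{1/R}^{\ovl y}(1)$. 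Summing the contributions of $J_1$ and $J_2$ gives \eqref{eq:est-NL-out}.

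For the uniform bound \eqref{eq:est-NL-out-unif}, assumption (M2) upgrades both $o_{1/R}^{\ovl x,\ovl y}(1)$ and $o_{1/R}^{\ovl y}(1)$ to a genuine $o_{1/R}(1)$, while (M3) gives $|\nu_{\ovl x}-\nu_{\ovl y}|(B_R\setminus B) \le \omega_{R,1}(|\ovl x-\ovl y|)$, which is $o_\eps^R(1)$ by Lemma \ref{lem.max}(iv). I would then fix $R$ large enough that the uniform tail is at most $\lambda M/32$, next take $\eps$ small enough that $2\|u\|_\infty\omega_{R,1}(|\ovl x-\ovl y|)$ is also at most $\lambda M/32$, and read off the residual $C_0\beta R\,C_\nu$ with $R$ now fixed as the announced $o_\beta(1)$. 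The only delicate point is sequencing the limits --- $R$ first (using (M2) to uniformize tails), then $\eps$ (using (M3) and the gradient bound from Lemma \ref{lem.max}), with $\beta$ carried as the residual parameter --- so that every hidden dependence vanishes in the expected order.
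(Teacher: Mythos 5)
Your decomposition into $J_1$ (measure mismatch against the bounded $\ovl u$-increments) and $J_2$ (coupled increments against $\nu_{\ovl y}$), the use of \eqref{eq:max-ineq} at $(\ovl x+z,\ovl y+z)$ so the quadratic penalty cancels and $J_2$ is bounded by the $\psi_\beta$-increments, the split of $B^c$ into $B_R\setminus B$ and $B_R^c$, and the final ordering of limits ($R$ first, then $\eps$, then reading off $o_\beta(1)$) all match the paper's proof. The only cosmetic deviation is that the paper delegates the $\psi_\beta$-term to Lemma~\ref{lem.localization}, whose internal optimization over $R$ (via Lemma~\ref{lem.modul-cont}) yields the $R$-free $o_\beta^{\ovl y}(1)$ appearing in~\eqref{eq:est-NL-out}, whereas you carry the same fixed $R$ throughout and thus obtain an $R$-dependent residual $C_0\beta R C_\nu$; this makes your version of the intermediate estimate slightly weaker than the stated one, but your explicit limit sequencing still delivers the uniform bound~\eqref{eq:est-NL-out-unif} correctly.
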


% --------------------------------------------------------------------------------------------

\begin{proof}[Proof of Lemma \ref{lem.est-NL-out}]
Direct computations give
\begin{eqnarray*}
 \I[B^c](\ovl{x}, \ovl{u})  -  \I[B^c](\ovl{y}, v) 
	& = &  \int_{B^c} \left(\ovl u (\ovl x+z) - \ovl u(\ovl x)\right) \nu_{\ovl x}(dz) 
		- \int_{B^c} \left(v (\ovl y +z) - v(\ovl y) \right) \nu_{\ovl y}(dz) \\	
	& = & \int_{B^c} \left(\ovl u (\ovl x+z) - \ovl u(\ovl x)\right) \left( \nu_{\ovl x}(dz) - \nu_{\ovl y}(dz)\right) \\
	&&  +  \; \int_{B^c}  \left(\ovl u (\ovl x+z) - \ovl u(\ovl x)  - (v (\ovl y +z) - v(\ovl y)) \right) \nu_{\ovl y}(dz).	
\end{eqnarray*}
However, in view of inequality \eqref{eq:max-ineq} and~Lemma~\ref{lem.localization}, we have
\begin{eqnarray}\label{eq:est-Bc}
\int_{B^c}\left(\ovl u(\ovl x+z)-\ovl u(\ovl x)-(v(\ovl y+z)-v(\ovl y))\right)\nu_{\ovl y}(dz)  &\leq & \I[B^c](\ovl y, \psi_\beta) = o^{\ovl y}_\beta(1).
\end{eqnarray}
In order to estimate the first term, we split the integration domain into $B_R\setminus B$ and $B_R^c$, and use the boundedness of $\ovl u$ to obtain that
\begin{eqnarray*}
\int_{B^c} \left(\ovl u (\ovl x+z) - \ovl u(\ovl x)\right) 
\left( \nu_{\ovl x}(dz) - \nu_{\ovl y}(dz)\right) 
& = &  \int_{B_R\setminus B}\left(\ovl u (\ovl x+z) - \ovl u(\ovl x)\right) 
					\left( \nu_{\ovl x}(dz) - \nu_{\ovl y}(dz)\right)\\
& + & \int_{B_R^c} \left(\ovl u (\ovl x+z) - \ovl u(\ovl x)\right)  \nu_{\ovl x}(dz) 
 	- \int_{B_R^c} \left(\ovl u (\ovl x+z) - \ovl u(\ovl x)\right)  \nu_{\ovl y}(dz)  \\
&\le & 2 \| u \|_\infty \left(  \int_{B_R\setminus B} |\nu_{\ovl{x}}- \nu_{\ovl y} | (dz)
	+   \int_{B_R^c}\nu_{\ovl{x}}(dz) +  \int_{B_R^c}\nu_{\ovl y}(dz)\right).
\end{eqnarray*}
Thanks to the regularity of the measure $\nu_\xi$ for fixed $\xi$, we get
\begin{eqnarray}\label{form428}
  \int_{B_R^c}\nu_{\ovl{x}}(dz) +  \int_{B_R^c}\nu_{\ovl y}(dz) \leq  o^{\ovl x}_{1/R}(1) + o^{\ovl y}_{1/R}(1),
&&
\end{eqnarray}
and~\eqref{eq:est-NL-out} follows.

Assuming now that (M2) holds, we obtain
a uniform decay at infinity with respect to the points 
${\ovl x, \ovl y}$, that is, 
the right-hand side of~\eqref{eq:est-Bc} reduces to $o_\beta (1)$ and
the right-hand side of~\eqref{form428} to $o_{1/R}(1)$.
Moreover, under Assumption (M3), if  $\omega_{R,1}$ is the modulus of continuity corresponding to the total variation distance between two Lévy measures on the circular crown $B_R\setminus B$, then we have
\begin{eqnarray*}
|\nu_{\ovl x} - \nu_{\ovl y}|(B_R\setminus B) \leq  \omega_{R,1}(|\ovl x - \ovl y|) \leq  o_\eps^{R}(1),
 &&
\end{eqnarray*} 
where the last inequality comes from  Lemma~\ref{lem.max}  (iv).
It follows that~\eqref{eq:est-NL-out} now reads
\begin{eqnarray*}
 \I[B^c](\ovl{x}, \ovl{u}) - \I[B^c](\ovl y, v) 
	& \leq &  o_\eps^{R}(1) +    o_{1/R}(1) +  o_\beta(1).
\end{eqnarray*}
Fixing $R\geq 1$ big enough so that
$o_{1/R}(1)\leq \lambda M/ 32$, then
taking $\eps>0$ small enough, such that  
$o_\eps^R(1) \leq \lambda M/32$, we conclude that~\eqref{eq:est-NL-out-unif} holds.

\end{proof}

% --------------------------------------------------------------------------------------------
\begin{lemma}[Estimate of the nonlocal difference on the crown $B_\rho\setminus B_\delta$]
\label{lem.est-NL-crown}
Let $(\nu_\xi)_{\xi\in\R^N}$ be a family of Lévy measures satisfying assumption (M1). 
Then, for all $\rho\in (\delta,1]$, the following holds
\begin{eqnarray*}%\label{eq:est-NL-crown}
 	\I[B_{\rho}\setminus B_\delta](\ovl{x}, \ovl p, \ovl{u}) - 
	\I[B_{\rho}\setminus B_\delta](\ovl y, \ovl p + \ovl q, v) 
 		&\leq &\frac{C}{\eps^2}\left({W_2}(\nu_{\ovl x}, \nu_{\ovl y})(B_{\rho})\right)^2
		+ \frac{1}{\eps^2} o_{\delta}^{\ovl x,\ovl y}(1) + o_\beta(1).
\end{eqnarray*}
If, in addition, (M4) holds, then, for $\eps>0 $ and $\beta>0$ sufficiently small and for all $\rho\in(\delta,1]$,
\begin{equation}\label{eq:est-NL-crown-unif}
 	\I[B_{\rho}\setminus B_\delta](\ovl{x}, \ovl p, \ovl{u}) - 
	\I[B_{\rho}\setminus B_\delta](\ovl y, \ovl p + \ovl q, v) \leq 	
 	\frac{1}{\eps^2}  |\ovl x - \ovl y| o_\rho(1) + \frac{1}{\eps^2}o_{\delta}^{\ovl x,\ovl y}(1) + o_\beta(1).
\end{equation}
\end{lemma}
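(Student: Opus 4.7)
The strategy is to fix a Wasserstein-optimal (or near-optimal) admissible coupling $\gamma \in \mathrm{Adm}_{B_\rho}(\nu_{\ovl x}^\rho, \nu_{\ovl y}^\rho)$, combine the two nonlocal terms into a single integral against $\gamma$, and bound the resulting integrand pointwise using the maximum inequality~\eqref{eq:max-ineq}. Setting
$A(z_1) := \ovl{u}(\ovl{x} + z_1) - \ovl{u}(\ovl{x}) - \ovl p \cdot z_1$ and $B(z_2) := v(\ovl{y} + z_2) - v(\ovl{y}) - (\ovl p + \ovl q)\cdot z_2$,
and noting that each integrand carries the weight $\mathbf 1_{B_\rho\setminus B_\delta}$ (hence is supported away from the origin), Proposition~\ref{prop.split} rewrites
\[
\mathcal{N} := \mathcal I[B_\rho\setminus B_\delta](\ovl{x}, \ovl p, \ovl{u}) - \mathcal I[B_\rho\setminus B_\delta](\ovl y, \ovl p + \ovl q, v) = \int_{B_\rho \times B_\rho}\bigl[\mathbf 1_{B_\rho\setminus B_\delta}(z_1) A(z_1) - \mathbf 1_{B_\rho\setminus B_\delta}(z_2) B(z_2)\bigr]\,d\gamma.
\]

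The plan is then to partition $B_\rho \times B_\rho$ into four subsets $E_{ij}$, $i,j\in\{0,1\}$, distinguishing whether $z_1, z_2$ lie in the shell $B_\rho\setminus B_\delta$ or in $B_\delta$, and to apply~\eqref{eq:max-ineq} at a tailored pair on each piece. Testing~\eqref{eq:max-ineq} at $(\ovl{x} + z_1, \ovl y + z_2)$ on $E_{11}$ yields, after the cancellations produced by the definition of $\ovl p$ and $\ovl q = -D\psi_\beta(\ovl y)$ combined with a second-order Taylor expansion of $\psi_\beta$, the bound $A(z_1) - B(z_2) \le \varepsilon^{-2}|z_1 - z_2|^2 + C\beta^2 |z_2|^2$; testing at $(\ovl{x} + z_1, \ovl y)$ on $E_{10}$ gives $A(z_1)\le \varepsilon^{-2}|z_1|^2$; testing at $(\ovl x, \ovl y + z_2)$ on $E_{01}$ gives $-B(z_2)\le \varepsilon^{-2}|z_2|^2 + C\beta^2|z_2|^2$; on $E_{00}$ the integrand vanishes. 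Integrating against $\gamma$ leads to
\[
\mathcal{N} \le \frac{1}{\varepsilon^2}\Bigl[\int_{E_{11}}|z_1-z_2|^2 d\gamma + \int_{E_{10}}|z_1|^2 d\gamma + \int_{E_{01}}|z_2|^2 d\gamma\Bigr] + C\beta^2 \int_{B_\rho \times B_\rho}|z_2|^2 d\gamma.
\]

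The main obstacle is the control of the off-diagonal moments $\int_{E_{10}}|z_1|^2 d\gamma$ and $\int_{E_{01}}|z_2|^2 d\gamma$, which are not a~priori small. I exploit the triangle inequality $|z_1|^2 \le 2|z_1-z_2|^2 + 2|z_2|^2$: on $E_{10}$ the first term is absorbed into $W_2^2(\nu_{\ovl x}, \nu_{\ovl y})(B_\rho)$ by the optimality of $\gamma$, while the second, thanks to $|z_2|\le \delta$, is bounded by $4\int_{B_\delta}|z|^2 \nu_{\ovl y}(dz) = o_\delta^{\ovl y}(1)$ via an annular argument identical in spirit to Proposition~\ref{lem.Gigli}; a symmetric treatment handles $E_{01}$ and yields an $o_\delta^{\ovl x}(1)$ contribution. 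The final $\beta^2$-term is estimated by $C\beta^2 \int_{B_\rho}|z|^2 \nu_{\ovl y}(dz) \le C'\beta^2 = o_\beta(1)$ through Proposition~\ref{lem.Gigli} and (M1). Assembling these pieces yields the first inequality. For the second, assumption (M4) converts $W_2^2(\nu_{\ovl x}, \nu_{\ovl y})(B_\rho)$ into $o_\rho(1)|\ovl x-\ovl y|$ for $\rho\le r_0$; for $\rho \in (r_0, 1]$ one combines Lemma~\ref{lem.max}(iv) (smallness of $|\ovl x - \ovl y|$) with Proposition~\ref{prop.Wass-TV} and (M3) on the outer annulus to absorb the remaining contribution into $\varepsilon^{-2}|\ovl x - \ovl y| o_\rho(1)$, completing the proof.
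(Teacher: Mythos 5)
Your proposal follows the paper's proof essentially step-for-step: the same combination of the two nonlocal terms into one integral against an admissible coupling $\gamma$ (via Proposition~\ref{prop.split.f+g}), the same decomposition of $B_\rho\times B_\rho$ according to membership in the shell versus $B_\delta$ (your $E_{11},E_{10},E_{01},E_{00}$ are the paper's $\T_1,\T_2,\T_3$, the fourth piece vanishing identically), the same application of the maximum inequality~\eqref{eq:max-ineq} at the test pairs $(\ovl x+z_1,\ovl y+z_2)$, $(\ovl x+z_1,\ovl y)$, $(\ovl x,\ovl y+z_2)$, the same absorption of off-diagonal moments via $|z_1|^2\le 2|z_1-z_2|^2+2|z_2|^2$ together with the annular argument of Proposition~\ref{lem.Gigli}, and the same infimum over couplings followed by~(M4). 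The only departure is your closing sketch for $\rho\in(r_0,1]$: invoking Proposition~\ref{prop.Wass-TV} and (M3) on the outer annulus produces a bound $\omega_{\rho,r_0}(|\ovl x-\ovl y|)$, which only yields the claimed $|\ovl x-\ovl y|\,o_\rho(1)$ if that modulus is Lipschitz — but the paper is equally terse about this range, and since the lemma is subsequently used only with $\rho$ small and with $\rho=1$ (where a constant multiple of $|\ovl x-\ovl y|$ suffices), this is a gloss in the statement rather than a flaw in your argument.
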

% --------------------------------------------------------------------------------------------

\begin{proof}[Proof of Lemma \ref{lem.est-NL-crown}]
In what follows, we write $A_{\rho,\delta} = B_{\rho} \setminus B_\delta$, and for each $z\in\R^N$ denote
\begin{equation*}
	\begin{split}
		\ell_{\ovl x}\ovl u (z) := & ~ \ovl{u}(\ovl x + z) - \ovl{u}(\ovl x ) - \ovl{p} \cdot z, \\
		\ell_{\ovl y} v(z) := & ~ v(\ovl y + z) - v(\ovl y) - (\ovl{p} + \ovl{q}) \cdot z. 
	\end{split}
\end{equation*}
Hence, we can write
\begin{eqnarray*}
\T(\ovl x, \ovl y) & := &  
	 \I[B_{\rho}\setminus B_\delta](\ovl{x}, \ovl p, \ovl{u}) - 
	 \I[B_{\rho}\setminus B_\delta](\ovl y, \ovl p + \ovl q, v)\\
	 & = &
	 \int_{B_{\rho}} \ell_{\ovl x} \ovl u(z) \chi_{A_{\rho,\delta}}(z) \nu_{\ovl x}(dz) - 
	 \int_{B_{\rho}} \ell_{\ovl y} v(z) \chi_{A_{\rho,\delta}}(z) \nu_{\ovl y}(dz). 
\end{eqnarray*}	 
Notice that for all $\delta > 0$, the functions  $\ell_{\ovl x}\ovl u\chi_{A_{\rho,\delta}}$ and 
$\ell_{\ovl y} v\chi_{A_{\rho,\delta}}$ have support away from the origin. 
Consider an admissible coupling $\gamma_{(\ovl x, \ovl y)} \in \text{Adm}_{B_{\rho}}\left(\nu_{\ovl x}^\rho, \nu_{\ovl y}^\rho\right)$. 
In view of Proposition~\ref{prop.split.f+g}  in the Appendix, it follows that
\begin{eqnarray*}
\T(\ovl x, \ovl y) 
& = & \int \limits_{B_{\rho} \times B_{\rho}}
	\left(\ell_{\ovl x} \ovl u(z_1) \chi_{A_{\rho,\delta}}(z_1) 
					 - \ell_{\ovl y} v(z_2) \chi_{A_{\rho,\delta}}(z_2)\right) d\gamma_{(\ovl x, \ovl y)}(z_1, z_2) \\
& =& \T_1(\ovl x, \ovl y) + \T_2(\ovl x, \ovl y) + \T_3(\ovl x, \ovl y), 
\end{eqnarray*}
where
\begin{eqnarray*}
\T_1(\ovl x, \ovl y) & :=  & \int \limits_{A_{\rho,\delta} \times A_{\rho,\delta}}
	 	\left(  \ell_{\ovl x} \ovl u(z_1)  - \ell_{\ovl y} v(z_2)\right) d\gamma_{(\ovl x, \ovl y)}(z_1, z_2) \\
\T_2(\ovl x, \ovl y)  & := &  \int \limits_{A_{\rho,\delta} \times B_\delta} 
		\ell_{\ovl x} \ovl u(z_1) d\gamma_{(\ovl x, \ovl y)}(z_1, z_2) \\
\T_3(\ovl x, \ovl y) & := &  - \int \limits_{B_\delta \times A_{\rho,\delta}} 
		\ell_{\ovl y} v(z_2) d\gamma_{(\ovl x, \ovl y)}(z_1, z_2).
\end{eqnarray*}
In the following, we estimate each of the integral-differential terms $\T_i(\ovl x,\ovl y)$, $i=1,2,3$ separately. 
\smallskip

In order to estimate $\T_1(\ovl x,\ovl y)$, we make use of inequality \eqref{eq:max-ineq}, applied to $x = \ovl x +z_1$ and $y = \ovl y + z_2$, which gives
\[ \ell_{\ovl x} \ovl u(z_1) - \ell_{\ovl y} v(z_2) \leq
	 \frac{1}{\eps^2} |z_1 - z_2|^2  + 
	 \left( \psi_\beta(\ovl y + z_2) - \psi_\beta(\ovl y) - D\psi_\beta(\ovl y) \cdot z_2 \right).  \]
Therefore, after integration, we get the following estimate
\begin{eqnarray*}
\T_1(\ovl x, \ovl y) 
        & \leq &  \int \limits_{A_{\rho,\delta} \times A_{\rho,\delta}}
                \left(  \frac{1}{\eps^2} |z_1 - z_2|^2  +
		\left( \psi_\beta(\ovl y + z_2) - \psi_\beta(\ovl y) - D\psi_\beta(\ovl y) \cdot z_2 \right) \right)
	 	d\gamma_{(\ovl x, \ovl y)}(z_1, z_2)\\
	& \leq &  \frac{1}{\eps^2} \int \limits_{A_{\rho,\delta} \times A_{\rho,\delta}} |z_1 - z_2|^2d\gamma_{(\ovl x, \ovl y)}(z_1, z_2) 		
		+ \|D^2\psi_\beta \|_\infty \int \limits_{A_{\rho,\delta} \times A_{\rho,\delta}}|z_2|^2d\gamma_{(\ovl x, \ovl y)}(z_1,z_2)\\
	& \leq & \frac{1}{\eps^2} \int \limits_{B_{\rho} \times B_{\rho}} 
			|z_1 - z_2|^2d\gamma_{(\ovl x, \ovl y)}(z_1, z_2) 		
		+ 4 C_0\beta^2 \int \limits_{B_{\rho}}|z_2|^2 \nu_{\ovl y}(d z_2) \\
	& \leq & \frac{1}{\eps^2} \int \limits_{B_{\rho} \times B_{\rho}} 
			|z_1 - z_2|^2d\gamma_{(\ovl x, \ovl y)}(z_1, z_2) 		
		+ o_\beta(1),
\end{eqnarray*}
where we have used  the positivity of the admissible plan $\gamma_{(\ovl x,\ovl y)}$ for the first term
and Proposition~\ref{lem.Gigli} for the estimate of the latter term.
\smallskip

In order to estimate $\T_2(\ovl x,\ovl y)$, we make use of inequality \eqref{eq:max-ineq}, applied to $x = \ovl x +z_1$ and $y = \ovl y$ and observe that
\begin{eqnarray*}
 	\ell_{\ovl x}\ovl u(z_1)
	& \leq  & \frac{|\ovl x -  \ovl y + z_1|^2}{\eps^2} -  \frac{|\ovl x -  \ovl y|^2}{\eps^2}  -\ovl p \cdot  z_1 =  \frac{|z_1|^2}{\eps^2}. 
\end{eqnarray*}
This implies, by integration and thanks again to Proposition~\ref{lem.Gigli}, that
\begin{eqnarray*}
\T_2(\ovl x, \ovl y)  
	& \leq & \frac{1}{\eps^2} \int \limits_{A_{\rho,\delta} \times B_\delta}  
				|z_1|^2 d\gamma_{(\ovl x, \ovl y)}(z_1, z_2) \\
	& \leq & \frac{2}{\eps^2} \int \limits_{A_{\rho,\delta} \times B_\delta} 
				|z_1-z_2|^2  d\gamma_{(\ovl x, \ovl y)}(z_1, z_2) 
	 	  + \frac{2}{\eps^2} \int \limits_{A_{\rho,\delta} \times B_\delta} 
		  		|z_2|^2 d\gamma_{(\ovl x, \ovl y)}(z_1, z_2) \\
	& \leq & \frac{2}{\eps^2} \int \limits_{B_{\rho} \times B_{\rho}} 
				|z_1-z_2|^2  d\gamma_{(\ovl x, \ovl y)}(z_1, z_2)
		  + \frac{8}{\eps^2} \int \limits_{B_\delta} |z_2|^2 \nu_{\ovl y}(dz_2)\\
	& \leq & \frac{2}{\eps^2} \int \limits_{B_{\rho} \times B_{\rho}} 
				|z_1-z_2|^2  d\gamma_{(\ovl x, \ovl y)}(z_1, z_2)
		  + \frac{1}{\eps^2} o_{\delta}^{\ovl y}(1).	 		  	 
\end{eqnarray*}

Similarly, in order to estimate $\T_3(\ovl x,\ovl y)$, we make use of inequality \eqref{eq:max-ineq}, applied to $x = \ovl x$ and $y = \ovl y + z_2$ and observe that
\begin{eqnarray*}
 	- \ell_{\ovl y} v(z_2) 
	& \leq  & \frac{|z_2|^2}{\eps^2}  +  \left( \psi_\beta(\ovl y + z_2) - \psi_\beta(\ovl y) - D\psi(\ovl y) \cdot z_2 \right). 
\end{eqnarray*}
This implies, by integration, using similar arguments to those used in the estimates above, that
\begin{eqnarray*}
\T_3(\ovl x, \ovl y)  
	& \leq & \frac{1}{\eps^2} \int \limits_{B_\delta \times A_{\rho,\delta}}  
				|z_2|^2 d\gamma_{(\ovl x, \ovl y)}(z_1, z_2)  +
	 \int \limits_{B_\delta \times A_{\rho,\delta}}  \left( \psi_\beta(\ovl y + z_2) - \psi_\beta(\ovl y) 
	 			+ \ovl q \cdot z_2 \right)d\gamma_{(\ovl x, \ovl y)}(z_1, z_2) \\
	& \leq & \frac{2}{\eps^2} \int \limits_{B_{\rho} \times B_{\rho}} |z_1-z_2|^2  d\gamma_{(\ovl x, \ovl y)}(z_1, z_2) 
	 	  + \frac{1}{\eps^2} o_{\delta}^{\ovl x}(1)
		  +  o_\beta(1). 
\end{eqnarray*}
In conclusion, we obtain
\begin{eqnarray*}
\T(\ovl x, \ovl y)  
& \leq &  \frac{5}{\eps^2} \int \limits_{B_{\rho} \times B_{\rho}} 
					|z_1 - z_2|^2d\gamma_{(\ovl x, \ovl y)}(z_1, z_2) 
		+ \frac{1}{\eps^2}  o_{\delta}^{\ovl x, \ovl y}(1)+ o_\beta(1).		  
\end{eqnarray*}
Taking infimum among all admissible couplings, we are led to the estimate
\begin{eqnarray*}
\T(\ovl x, \ovl y)  & \leq &   
	\frac{5}{\eps^2}  \inf_{\gamma_{(\ovl x, \ovl y)}}
	\int \limits_{B_{\rho} \times B_{\rho}} |z_1 - z_2|^2d\gamma_{(\ovl x, \ovl y)}(z_1, z_2) 		
		  + \frac{1}{\eps^2}  o_{\delta}^{\ovl x,\ovl y}(1)+ o_\beta(1) \\
	&  = &   \frac{5}{\eps^2}  \left(W_2(\nu_{\ovl x}, \nu_{\ovl y})(B_{\rho})\right)^2
		  + \frac{1}{\eps^2}  o_{\delta}^{\ovl x,\ovl y}(1)+ o_\beta(1).
\end{eqnarray*} 
Finally, when assumptions (M4) is employed, we obtain~\eqref{eq:est-NL-crown-unif}.
\end{proof}

We next establish, from all of the above estimates, the boundedness of $\ovl p$, for sufficiently small $\eps,\beta$ and $\mu$ sufficiently close to $1$.

% --------------------------------------------------------------------------------------------
\begin{lemma}\label{lem.p-bound}
Under the assumptions of the theorem, we have for $\eps>0,\beta>0$ sufficiently small and $\mu\in(0,1)$ sufficiently close to $1$, that
\begin{equation}\label{eq:p-bound}
	|\ovl{p} |\leq \dfrac{C}{(1 - \mu)^{1/(m - 1)}},
\end{equation}
where $C$ is a constant depending on $m$ and $\|u\|_\infty, \|v\|_\infty$, but on none of the parameters above.
\end{lemma}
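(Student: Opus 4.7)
The plan is to assemble the viscosity inequality~\eqref{eq:visc-ineq} with every preliminary estimate established above, in order to isolate the coercive term $(1-\mu) b_m |\ovl p|^m$ on one side, then absorb the remaining lower-order contributions in $|\ovl p|$ through Young's inequality.

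First I would bound the left-hand side of~\eqref{eq:visc-ineq} from below using~\eqref{eq:est-fct} (which gives $\lambda M/4$) together with Lemma~\ref{lem.est-H} (which gives $\tfrac{1}{2}(1-\mu)b_m|\ovl p|^m - o_\eps(1) - o_\beta(1) - \lambda M/8$). On the right-hand side I would decompose the integration domain as $B_\delta^c = A_{\rho,\delta} \cup (B\setminus B_\rho) \cup B^c$ with $\delta < \rho < r_0$, invoking: Lemma~\ref{lem.est-NL-in} for $B_\delta$; the uniform estimate~\eqref{eq:est-NL-crown-unif} on the crown $A_{\rho,\delta}$, which, using $\tfrac{1}{\eps^2}|\ovl x - \ovl y| = \tfrac{1}{2}|\ovl p|$, produces the critical factor $\tfrac{1}{2}|\ovl p|\,o_\rho(1)$; a direct computation on the intermediate annulus $B\setminus B_\rho$ exploiting (M3), the boundedness of $\ovl u, v$, and $|\ovl q|\le C_0\beta$ (yielding $|\ovl p|\,o_\eps^\rho(1) + o_\eps^\rho(1) + o_\beta(1)$); and finally~\eqref{eq:est-NL-out-unif} on $B^c$, which contributes $\lambda M/16 + o_\beta(1)$. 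After cancelling a positive slack $\lambda M/8 - \lambda M/16 = \lambda M/16 > 0$, the combined inequality reads
\begin{equation*}
\tfrac{1}{2}(1-\mu) b_m |\ovl p|^m \leq \tfrac{1}{2}|\ovl p|\,o_\rho(1) + |\ovl p|\,o_\eps^\rho(1) + \tfrac{1}{\eps^2} o_\delta^{\ovl x,\ovl y}(1) + o_\eps(1) + o_\beta(1).
\end{equation*}
Letting $\delta\searrow 0$ with $\eps, \beta, \rho$ fixed eliminates the $\eps^{-2}o_\delta^{\ovl x,\ovl y}(1)$ term, since the maxima $(\ovl x, \ovl y)$ do not depend on $\delta$.

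The principal difficulty is absorbing the linear-in-$|\ovl p|$ terms into the coercive $m$-th power. For this I would apply Young's inequality $ab \leq \tfrac{a^m}{m} + \tfrac{m-1}{m}b^{m/(m-1)}$ with $a = \alpha|\ovl p|$, $b = o_\rho(1)/\alpha$, and $\alpha > 0$ chosen so that $\alpha^m/(2m) = \tfrac{1}{4}(1-\mu)b_m$. This converts
\begin{equation*}
\tfrac{1}{2}|\ovl p|\,o_\rho(1) \leq \tfrac{1}{4}(1-\mu) b_m |\ovl p|^m + \frac{C_m\,(o_\rho(1))^{m/(m-1)}}{(1-\mu)^{1/(m-1)}},
\end{equation*}
and analogously for $|\ovl p|\,o_\eps^\rho(1)$. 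Fixing $\rho$ once-and-for-all so that $o_\rho(1)$ is bounded by an absolute constant (independent of $\mu, \eps, \beta$), and then shrinking $\eps, \beta$ sufficiently to render the residual $o_\eps^\rho(1), o_\eps(1), o_\beta(1)$ terms negligible, we are left with
\begin{equation*}
\tfrac{1}{4}(1-\mu) b_m |\ovl p|^m \leq \frac{C}{(1-\mu)^{1/(m-1)}},
\end{equation*}
which rearranges to $|\ovl p|^m \leq C'/(1-\mu)^{m/(m-1)}$ and yields the stated bound $|\ovl p| \leq C''/(1-\mu)^{1/(m-1)}$ with $C''$ depending only on $m$, $b_m$, $\|u\|_\infty, \|v\|_\infty$.
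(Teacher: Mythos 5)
Your proof is correct, but it takes a genuinely different route to the bound than the paper does. The paper plugs the estimates \eqref{eq:est-fct}, \eqref{eq:est-H}, \eqref{eq:est-NL-in}, \eqref{eq:est-NL-out-unif} and \eqref{eq:est-NL-crown-unif} \emph{with $\rho=1$} into \eqref{eq:visc-ineq}, collapses all linear-in-$|\ovl p|$ contributions into a single $C|\ovl p|$, sends $\delta\searrow 0$, and arrives at the factored inequality
\[
\lambda\frac{M}{16}\;\leq\; |\ovl p|\bigl(C-(1-\mu)b_m|\ovl p|^{m-1}\bigr).
\]
Since the left-hand side is strictly positive, the second factor must be positive, which immediately forces $(1-\mu)b_m|\ovl p|^{m-1}\leq C$, and the bound follows by taking the $(m-1)$-th root — no further convexity inequality is needed. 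You instead keep an intermediate radius $\rho\in(\delta,r_0)$, split $B\setminus B_\delta$ into $B_\rho\setminus B_\delta$ (handled by \eqref{eq:est-NL-crown-unif}) and $B\setminus B_\rho$ (handled directly via (M3) and boundedness of $\ovl u,v$, which essentially reproves the outer-annulus piece of Lemma~\ref{lem.est-NL-crown-2}), and then absorb both resulting linear terms in $|\ovl p|$ via a second application of Young's inequality with an explicit $(1-\mu)$-dependent weight. The two routes yield the same bound with the same $(1-\mu)^{-1/(m-1)}$ scaling. The paper's version is leaner; yours has the mild advantage of staying strictly inside the range $\rho<r_0$ where (M4) is literally stated (avoiding the $\rho=1$ endpoint that \eqref{eq:est-NL-crown-unif} nominally allows), at the cost of an extra Young's inequality and a once-and-for-all choice of $\rho$.
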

% --------------------------------------------------------------------------------------------

\begin{proof}[Proof of Lemma \ref{lem.p-bound}]
It follows from the viscosity inequality \eqref{eq:visc-ineq}, 
and the estimates provided in  \eqref{eq:est-fct},  \eqref{eq:est-H}, 
\eqref{eq:est-NL-in},  \eqref{eq:est-NL-out-unif},  \eqref{eq:est-NL-crown-unif} with $\rho=1$, that 
\begin{eqnarray*} 
 \lambda \frac{M}{16} + \frac12(1 - \mu)b_m |\ovl{p} |^m  & \leq &  
 	\frac{C}{\eps^2}  |\ovl x - \ovl y| + 
    \frac{1}{\eps^2}  o_\delta^{\ovl x,\ovl y}(1)  +  
    o_{\eps}(1) + o_{\beta}(1).
 \end{eqnarray*} 
Recalling that $|\ovl p|= 2 |\ovl x - \ovl y| /\eps^2$, it follows that
\begin{eqnarray*} 
 \lambda \frac{M}{8} +  (1 - \mu)b_m |\ovl{p} |^m  
 	& \leq & C|\ovl p| + \frac{1}{\eps^2}  o_\delta^{\ovl x,\ovl y}(1) +  o_{\eps}(1) + o_{\beta}(1).
\end{eqnarray*}
Fix $\eps>0, \beta>0$ small enough, so that $o_{\eps}(1) <\lambda M/32$ and $o_{\beta}(1) <\lambda M/32$, we obtain
\begin{eqnarray*} 
 \lambda \frac{M}{16} +  (1 - \mu)b_m |\ovl{p} |^m  
 	& \leq & C|\ovl p| + \frac{1}{\eps^2}  o_\delta^{\ovl x,\ovl y}(1).
\end{eqnarray*}
Letting now $\delta \searrow 0$, it follows that
\begin{eqnarray*} 
 \lambda \frac{M}{16}
 	& \leq & |\ovl p| \left( C - (1 - \mu)b_m |\ovl{p} |^{m-1}\right).
\end{eqnarray*}
Since {$m>1$} and we assumed $M>0$, up to a modification of the constant $C$, we infer that
\[ |\ovl{p} |\leq \dfrac{C}{(1 - \mu)^{1/(m - 1)}}.\]
Note that the constant $C$ is independent of all the parameters taken within the proof, and it only depends on the data of the problem.

\end{proof}

To reach the conclusion, we provide a refined estimate of the nonlocal difference on the circular crown, which takes into account the boundedness of $\ovl p$. 

% --------------------------------------------------------------------------------------------
\begin{lemma}[Refined estimate of the nonlocal difference on the crown $B\setminus B_\delta$]\label{lem.est-NL-crown-2}
Let $(\nu_\xi)_{\xi\in\R^N}$ be a family of Lévy measures satisfying assumption (M1). 
Then, for any $\rho\in(\delta,1)$ the following holds
\begin{eqnarray}\label{eq:est-NL-crown-2a}
    \I[B\setminus B_\delta](\ovl{x}, \ovl p, \ovl{u}) - 
    \I[B\setminus B_\delta](\ovl y, \ovl p + \ovl q, v) 
        & \leq & \left( 2 \| u \|_\infty + |\ovl p| \right) 
                |\nu_{\ovl x}- \nu_{\ovl y}| (B\setminus B_{\rho}) +\\
  \nonumber  		
  &&  \frac{C}{\eps^2}\left({W_2}(\nu_{\ovl x}, \nu_{\ovl y})(B_{\rho})\right)^2
		  + \frac{1}{\eps^2} o_{\delta}^{\ovl x,\ovl y}(1)+ o_\beta(1).
\end{eqnarray}
If, in addition, assumptions (M2), (M3) and (M4) hold, then for all $0<\delta <\rho<1$  and for all $\eps,\beta>0$, 
\begin{equation}\label{eq:est-NL-crown-22}
 	\I[B\setminus B_\delta](\ovl{x}, \ovl p, \ovl{u}) - \I[B\setminus B_\delta](\ovl y, \ovl p + \ovl q, v) 
 		\leq C   \omega_{1,\rho}(|\ovl x - \ovl y|) + o_\rho(1) + \frac{1}{\eps^2}o_{\delta}^{\ovl x,\ovl y}(1)+ o_\beta(1).
\end{equation}
\end{lemma}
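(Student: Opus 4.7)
The plan is to decompose the annulus $B\setminus B_\delta$ into an inner crown $B_{\rho}\setminus B_\delta$, where the Wasserstein approach of Lemma~\ref{lem.est-NL-crown} applies directly, and an outer crown $B\setminus B_{\rho}$ away from the origin, where the total variation is a better tool since the integrand is globally bounded there.

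For the inner crown, I invoke Lemma~\ref{lem.est-NL-crown} as is, which produces the $\frac{C}{\eps^2}(W_2(\nu_{\ovl x},\nu_{\ovl y})(B_\rho))^2 + \frac{1}{\eps^2}o_\delta^{\ovl x,\ovl y}(1) + o_\beta(1)$ portion of the bound. For the outer crown, I write
\begin{eqnarray*}
\I[B\setminus B_\rho](\ovl x,\ovl p,\ovl u) - \I[B\setminus B_\rho](\ovl y,\ovl p + \ovl q,v)
&=& \int_{B\setminus B_\rho}\bigl(\ovl u(\ovl x+z)-\ovl u(\ovl x)-\ovl p\cdot z\bigr)\bigl(\nu_{\ovl x}-\nu_{\ovl y}\bigr)(dz) \\
&+& \int_{B\setminus B_\rho}\!\!\bigl[(\ovl u(\ovl x+z)-\ovl u(\ovl x)-\ovl p\cdot z) - (v(\ovl y+z)-v(\ovl y)-(\ovl p+\ovl q)\cdot z)\bigr]\nu_{\ovl y}(dz).
\end{eqnarray*}
The first piece is controlled pointwise by $2\|u\|_\infty + |\ovl p||z| \leq 2\|u\|_\infty + |\ovl p|$, which, integrated against the signed measure, yields the $(2\|u\|_\infty + |\ovl p|)\,|\nu_{\ovl x}-\nu_{\ovl y}|(B\setminus B_\rho)$ term. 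The second piece is handled exactly as in~\eqref{eq:est-Bc}: applying the maximum inequality~\eqref{eq:max-ineq} at the pair $(\ovl x + z,\ovl y + z)$ kills the quadratic penalization and leaves precisely $\psi_\beta(\ovl y+z)-\psi_\beta(\ovl y)-D\psi_\beta(\ovl y)\cdot z$ inside the integral, so the contribution is at most $\I[B\setminus B_\rho](\ovl y,\psi_\beta) = o_\beta(1)$ thanks to Lemma~\ref{lem.localization}. Summing both crowns gives~\eqref{eq:est-NL-crown-2a}.

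For the refined estimate~\eqref{eq:est-NL-crown-22}, I plug in the continuity assumptions. Assumption (M3) directly bounds $|\nu_{\ovl x}-\nu_{\ovl y}|(B\setminus B_\rho)\leq \omega_{1,\rho}(|\ovl x-\ovl y|)$, and since by Lemma~\ref{lem.p-bound} the gradient $|\ovl p|$ is bounded by a constant depending only on the data and $\mu$, the prefactor $(2\|u\|_\infty+|\ovl p|)$ absorbs into the constant $C$. For the Wasserstein term, assumption (M4) gives $W_2^2(\nu_{\ovl x},\nu_{\ovl y})(B_\rho) \leq o_\rho(1)\,|\ovl x-\ovl y|$, and recalling $|\ovl x-\ovl y| = \eps^2|\ovl p|/2$, this contributes $\frac{C}{\eps^2}o_\rho(1)\cdot\eps^2|\ovl p|/2 = |\ovl p|\,o_\rho(1) = o_\rho(1)$, again using the boundedness of $|\ovl p|$.

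The main subtlety is the circular-looking dependence: Lemma~\ref{lem.p-bound} was itself obtained using the crude crown estimate~\eqref{eq:est-NL-crown-unif}, and now we feed its output back into a sharper crown estimate. This is legitimate because~\eqref{eq:p-bound} only uses the non-refined bound~\eqref{eq:est-NL-crown-unif} with $\rho=1$, and thus Lemma~\ref{lem.p-bound} is at our disposal with no dependence on $\rho$. The only thing to watch is that the constant in the final bound may depend on $\mu$ (through $|\ovl p|$), which is harmless because $\mu$ will be fixed before letting $\rho,\delta,\eps,\beta$ go to zero in the conclusion of the comparison proof.
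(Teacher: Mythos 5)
Your proof is correct and follows essentially the same route as the paper: decompose the crown at radius $\rho$, handle the outer part $B\setminus B_\rho$ by splitting into a total-variation term (bounded pointwise by $2\|u\|_\infty+|\ovl p||z|$) and a term killed by the maximum inequality together with $\ovl q = -D\psi_\beta(\ovl y)$, invoke Lemma~\ref{lem.est-NL-crown} on the inner part, and then feed in (M3), (M4), and Lemma~\ref{lem.p-bound} with the identity $|\ovl x-\ovl y|=\eps^2|\ovl p|/2$. Your remark on why reusing Lemma~\ref{lem.p-bound} here is not circular is accurate and usefully makes explicit something the paper leaves implicit.
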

% --------------------------------------------------------------------------------------------

\begin{proof}[Proof of Lemma \ref{lem.est-NL-crown-2}]
In view of  Lemma \ref{lem.est-NL-crown}, we already have an estimate for the nonlocal difference on the circular crown $B\setminus B_\delta$, with small $\delta$. 
The goal is to refine the above estimate by considering an intermediate radius $\rho\in (\delta,1)$ and control the nonlocal difference by the total variation distance of the Lévy measures on the exterior crown $B\setminus B_\rho$ {-- taking profit of the boundedness of  $\ovl p$}, and by the Wasserstein distance of the Lévy measures on the inner crown $B_\rho \setminus B_\delta$. 

As such, the nonlocal difference on $B\setminus B_\rho$ is given by
\begin{eqnarray*}
 \T[B\setminus B_\rho](\ovl x, \ovl y)  & = & \I[B\setminus B_\rho](\ovl{x}, \ovl p, \ovl{u}) - \I[B\setminus B_\rho](\ovl y, \ovl p + \ovl q, v) \\
	& = & \int_{B\setminus B_\rho} \left(\ovl u (\ovl x+z) - \ovl u(\ovl x) - \ovl p \cdot z \right) 
		\left( \nu_{\ovl x}(dz) - \nu_{\ovl y}(dz)\right) \\
	&&  +  \; \int_{B\setminus B_\rho}  \left(\ovl u (\ovl x+z) - \ovl u(\ovl x)  - (v (\ovl y +z) - v(\ovl y)) + \ovl q \cdot z\right) \nu_{\ovl y}(dz).
\end{eqnarray*}
On one hand, we estimate the first term by employing the total variation of the two measures
\begin{eqnarray*}
 \int_{B\setminus B_\rho} \left(\ovl u (\ovl x+z) - \ovl u(\ovl x) - \ovl p \cdot z \right) 
		\left( \nu_{\ovl x}(dz) - \nu_{\ovl y}(dz)\right) 
	& \leq & \left( 2 \| u \|_\infty + |\ovl p| \right) \int_{B\setminus B_\rho} |\nu_{\ovl x}- \nu_{\ovl y}|(dz).
\end{eqnarray*}	
On the other hand, we control the second term by the integro-differential term associated with the localization function $\psi_\beta$:
\begin{eqnarray*}
 && \int_{B\setminus B_\rho}  \big(\ovl u (\ovl x+z) - \ovl u(\ovl x)  
  -  (v (\ovl y +z) - v(\ovl y)) + \ovl q \cdot z\big) \nu_{\ovl y}(dz) \\
 && \leq    \int_{B\setminus B_\rho} \left(\psi_\beta(\ovl y+z) - \psi_\beta(\ovl y) - D\psi_\beta(\ovl y) \cdot z \right) \nu_{\ovl y}(dz)
  \leq   |  D^2\psi_\beta| _\infty \int_{B\setminus B_\rho}|  z|  ^2 \nu_{\ovl y}(dz)  = o_\beta(1).
\end{eqnarray*}
Thus, we get the upper bound
\begin{eqnarray*}
 \T[B\setminus B_\rho](\ovl x, \ovl y) & \leq &  
 	\left( 2 \| u \|_\infty + |\ovl p| \right) |\nu_{\ovl x}- \nu_{\ovl y}|(B\setminus B_\rho) + o_\beta(1).
 \end{eqnarray*}
This, coupled with the estimate of the nonlocal difference on $B_\rho\setminus B_\delta$ given by Lemma \ref{lem.est-NL-crown}, leads to the global estimate \eqref{eq:est-NL-crown-2a}. Moreover, if assumptions (M3) and (M4) hold, then, in view of the boundedness of $\overline p$, the estimate boils down to
\begin{eqnarray*}
\T[B\setminus B_\delta](\ovl x, \ovl y) 
 		& \leq & \left( 2 \| u \|_\infty + |\ovl p| \right) \omega_{1,\rho}(|\ovl x - \ovl y|) + 
		|\ovl p| o_\rho(1) + 
        \frac{1}{\eps^2} o_{\delta}^{\ovl x, \ovl y}(1) + 
        o_\beta(1)\\
		& \leq & C   \omega_{1,\rho}(|\ovl x - \ovl y|) + o_\rho(1) + \frac{1}{\eps^2}o_{\delta}^{\ovl x,\ovl y}(1)+ o_\beta(1).
\end{eqnarray*}
\end{proof}

Finally, it follows from the viscosity inequality \eqref{eq:visc-ineq}, 
and the estimates provided in  \eqref{eq:est-fct},  \eqref{eq:est-H}, 
\eqref{eq:est-NL-in},  \eqref{eq:est-NL-out-unif},  \eqref{eq:est-NL-crown-22}
for the terms therein, that 
\begin{eqnarray*} 
 \lambda \frac{M}{16} + \frac12(1 - \mu)b_m |\ovl{p} |^m  
 	& \leq &  C \omega_{1,\rho}(|\ovl x - \ovl y|) + o_\rho(1)  + \frac{1}{\eps^2}o_{\delta}^{\ovl x,\ovl y}(1)  
		      + o_{\eps}(1) + o_{\beta}(1).
 \end{eqnarray*} 
Let first $\delta\searrow 0$, to obtain for any $0<\rho<1$, that
\begin{eqnarray*} 
 \lambda \frac{M}{16} + \frac12(1 - \mu)b_m |\ovl{p} |^m  
 & \leq &  C \omega_{1,\rho}(|\ovl x - \ovl y|) + 
		 o_\rho(1)  + o_{\eps}(1) + o_{\beta}(1).
 \end{eqnarray*} 
Fix now $\rho>0$ sufficiently small so that $o_\rho(1) \leq \lambda M/32$. This leads to 
\begin{eqnarray*} 
 \lambda \frac{M}{32}  \quad \leq \quad  \lambda \frac{M}{32} + \frac12(1 - \mu)b_m |\ovl{p} |^m  
 & \leq & C \omega_{1,\rho}(|\ovl x - \ovl y|)  + o_{\eps}(1) + o_{\beta}(1).
 \end{eqnarray*} 
Finally, send $\eps\searrow 0$ and $\beta\searrow 0$ and recall that, in view of Lemma \ref{lem.max}, $|\ovl x - \ovl y| =o_\eps(1)$, to infer that the right-hand side of the inequality tends to zero, arriving thus at the contraction with the fact that $M>0$. Hence, the assumption made is false and $ u\le v \text{ all over } \R^N.$

\hfill$\Box$

% =================================================================================
\section{Extensions}\label{sec:extensions}
% =================================================================================

The above comparison result can be extended to elliptic partial integro-differential equations with degenerate diffusion, as well as to parabolic problems where the time dependence only appears in the coefficients of the Hamiltonian. We discuss the two extensions below.

% ---------------------------------------------------------------------------------
\subsection{Elliptic PIDEs with second order terms}
% ---------------------------------------------------------------------------------

Consider an elliptic integro--differential equation with possibly degenerate second--order diffusion, which takes the following form 
\begin{equation}\label{eq:nHJ-diff}
\lambda u  - F(x, Du, D^2u) - \mathcal{I}_xu (x) + H(x, Du) = 0  \quad \mbox{in} \quad  \R^N,
\end{equation}
where $F:\R^N\times \R^N\times S^N\to \R$ is nondecreasing with respect to the matrix variable $X\in \mathbb S^N$.  

For the second order case, we employ the definition of viscosity sub- and supersolution involving sub- and super-jets instead of  the derivatives of the test function $(D\phi, D^2\phi)$. This is necessary when proving the comparison result to deal with the doubling of variables technique, resolved by the local Jensen-Ishii lemma (see \cite{BI08}). For completeness, we recall the notions of semi-jets within the nonlocal framework.

\begin{definition}
Let $u:\R^N\rightarrow\R$ and $v:\R^N\rightarrow\R$ be respectively an upper--semicontinuous and a lower-semicontinuous function.
\begin{itemize}
\item[(i)] We call \emph{the superjet of $u$ at $x\in \R^N$} and we denote by $\mathcal J^{2,+}u(x)$ the set 
$$
\mathcal J^{2,+}u(x) = \left\{ (p,X)\in\R^N\times\mathbb S^N; \; u(x+z) \leq u(x) + p\cdot z + \frac12 Xz\cdot z + o(|z|^2)\right\}.
$$
We say $(p,X)$ is a limiting superjet of $u$ at $x$ if there exists a sequence of points $x_n\in\R^N$ and a family of superjets $(p_n,X_n)\in \mathcal J^{2,+}u(x_n)$ such that $(x_n,p_n,X_n)\to (x,p,X)$ and $u(x_n) \to u(x)$. We denote this closure by $\ovl{ \mathcal J^{2,+}}u(x)$.
\item[(ii)] We call \emph{the subjet of $v$ at $x\in \R^N$} and we denote by $\mathcal J^{2,+}v(x)$ the set 
$$
\mathcal J^{2,-}v(x) = \left\{ (p,X)\in\R^N\times\mathbb S^N; \; v(x+z) \geq v(x) + p\cdot z + \frac12 Xz\cdot z + o(|z|^2)\right\}.
$$
We say $(p,X)$ is a limiting subjet of $v$ at $x$ if there exists a sequence of points $x_n\in\R^N$ and a family of subjets $(p_n,X_n)\in \mathcal J^{2,-}v(x_n)$ such that $(x_n,p_n,X_n)\to (x,p,X)$ and $v(x_n) \to v(x)$. We denote this closure by $\ovl {\mathcal J^{2,-}}v(x)$.
\end{itemize}
\end{definition}

We are now in place to state the definition of viscosity solutions for the second-order case PIDEs.

\begin{definition}[Viscosity solution -- second order nonlocal]$\;$
 \begin{itemize}
 \item[(i)] A function $u\in USC(\R^N)\cap L^{\infty}(\R^N)$ 
    is {\emph a viscosity subsolution} of \eqref{eq:nHJ-diff} iff, for any test function $\phi\in C^2(\R^N)\cap L^\infty(\R^N)$, if $\ovl x$ is a global maximum of  $u-\phi$  on $\R^N$ and $(p, X)\in \mathcal J^{2,+}u(\ovl x)$ with $p=D\phi(\ovl x)$ and $X\leq D^2\phi(\ovl x)$, then for any $\delta >0$,
	\[ \lambda u(\ovl x ) - F(\ovl x, p, X)- 
        \I[B_\delta](\ovl{x},  \phi) - 
        \I[B_\delta^c](\ovl{x}, p, u) 
	+ H(\ovl{x}, p)  \leq  0. \]
 \item[(ii)] A function $u\in LSC(\R^N)\cap L^{\infty}(\R^N)$ 
    is {\emph a viscosity supersolution} of \eqref{eq:nHJ-diff} iff, for any test function $\phi\in C^2(\R^N)\cap L^\infty(\R^N)$, if $\ovl x$ is a global minimum of  $u-\phi$ on $\R^N$  and $(p, Y)\in \mathcal J^{2,-}u(\ovl x)$ with $p=D\phi(\ovl x)$ and $Y\geq D^2\phi(\ovl x)$,  then for any $\delta >0$,
	\[ \lambda u(\ovl x ) - F(\ovl x, p, Y)- 
        \I[B_\delta](\ovl{x},  \phi) - 
        \I[B_\delta^c](\ovl{x}, p, u) 
	+ H(\ovl{x}, p)  \geq  0. \]
 \item[(iii)] A function $u\in C(\R^N)\cap L^{\infty}(\R^N)$ 
    is {\emph a viscosity solution} of \eqref{eq:nHJ-diff} iff it is both a viscosity subsolution and a viscosity supersolution.
 \end{itemize}
\end{definition}

In order to overcome the difficulties imposed by the presence of the $x-$dependent datum both in the second-order terms and in the nonlocal terms, we need a stronger assumption than (M4). Namely, we make a continuity assumption on the second-order moment for the total variation.

\begin{itemize}
\item[(M4)'] There exists $0<r_0<1$ such that, for all $0<r<r_0$, the second moment for the total variation distance between L\'evy measures satisfies, for all $x,y\in\R^N$,
		\[  \int_{B_r} | z |^2  |\nu_x - \nu_y|(dz) \leq o_r(1)|x-y|.\]	
\end{itemize}			
			
In view of Proposition \ref{prop.Wass-TV}, assumption (M4) is satisfied whenever (M4)' holds and all the estimates for the nonlocal terms obtained in the previous section are still valid.\smallskip

In addition to the set of assumptions (M1)--(M4)' on the family of Lévy measures, and (H1)-(H2) on the Hamiltonian, we shall assume the following on the local diffusion -- see~\cite{CIL92}.

\begin{itemize}
\item[(E)] For any $R>0$ there exists  a modulus of continuity $\omega^R_F:\R_+\to\R_+$, %with $\omega^R_{F}(s)\rightarrow 0 $, as $s\rightarrow 0$, 
such that, for all $x,y\in B_R$, any $\eps, \alpha, \beta>0$ and for all  $X,Y\in\mathbb S^{N}$ satisfying the inequality
\begin{equation}\label{eq:matrix_ineq}
\begin{bmatrix} 
X &  0 \\
0 & -Y 
\end{bmatrix} 
\leq \frac{1}{\eps^2}
\begin{bmatrix} 
 I & -I \\
-I &  I 
\end{bmatrix} + o_\beta(1) + o_\alpha(1),
\end{equation}
it holds 
\begin{equation}\label{eq:ell-F} 
 F(x,\frac{x-y}{\eps^2}, X) - F(y,\frac{x-y}{\eps^2} + o_\beta(1), Y)\leq
\omega^R_{F}\left(|x-y|+\frac{|x-y|^{2}}{\eps^2}\right) + o_\beta(1) + o_\alpha(1).
\end{equation}
\end{itemize}

Before proving the comparison principle, we check beforehand a convergence result for smooth perturbations
of the nonlocal diffusion on small balls, which is necessary to employ the nonlocal Jensen-Ishii lemma
from ~\cite{BI08}. This result uses strongly the assumption (M4)'.

% ---------------------------------------------------------------------------------
\begin{lemma}\label{lem.est-NL-alpha}
Assume (M1) and (M4)' hold.
Let $x\in\R^N$ and $\varphi\in C^2(B(x))\cap L^\infty(\R^N)$.  If $(x_n)_n$ is a sequence of points in $\R^N$ converging to $x\in\R^N$ as $n\to\infty$, and $(\varphi_n)_n$ is a family of functions in $C^2(B(x))\cap L^\infty(\R^N)$ converging in $C_{\rm loc}^2(B(x))$ to $\varphi$, then for any fixed $\delta\in(0,1)$,
\[ \I[B_\delta](x_n,\varphi_n) = \I[B_\delta](x,\varphi) + o_{1/n}(1).\]
\end{lemma}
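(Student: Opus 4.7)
The plan is to decompose $B_\delta = B_r \cup (B_\delta\setminus B_r)$ for some small $r > 0$ to be chosen and control the two contributions to $\I[B_\delta](x_n,\varphi_n) - \I[B_\delta](x,\varphi)$ separately. On the inner ball $B_r$ I would absorb the singularity of the Lévy measures using the quadratic bound coming from the $C^2$-regularity of the test functions; on the annulus I would use uniform convergence of the integrand together with Chebyshev's inequality plus (M4)' to compare the measures in total variation.

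Setting $\Phi_n(z) := \varphi_n(x_n+z) - \varphi_n(x_n) - D\varphi_n(x_n)\cdot z$ and similarly $\Phi$, the $C^2_{\rm loc}$-convergence of $\varphi_n$ near $x$ yields a uniform bound $\|D^2\varphi_n\|_{L^\infty(B_{r_0}(x))} \leq K$ for $n$ large, hence $|\Phi_n(z)| \leq \tfrac{K}{2}|z|^2$ on $B_r$. Applying (M4)' to the pair $(x, x_n)$ gives
\[ \int_{B_r}|z|^2\,d\nu_{x_n} \leq \int_{B_r}|z|^2\,d\nu_x + o_r(1)|x_n - x|, \]
and since $\int_{B_r}|z|^2\,d\nu_x \searrow 0$ as $r\searrow 0$ by (M1) and dominated convergence, both $|\I[B_r](x_n,\varphi_n)|$ and $|\I[B_r](x,\varphi)|$ are $o_r(1)$ uniformly in $n$ large.

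Fixing $r > 0$ small enough, on the annulus $A_r := B_\delta \setminus B_r$ I would write
\[ \I[A_r](x_n,\varphi_n) - \I[A_r](x,\varphi) = \int_{A_r}(\Phi_n - \Phi)\,d\nu_{x_n} + \int_{A_r}\Phi\,d(\nu_{x_n} - \nu_x). \]
The first integral vanishes as $n\to\infty$ since $\Phi_n \to \Phi$ uniformly on $A_r$ (by $C^2_{\rm loc}$-convergence of $\varphi_n$ and $x_n\to x$) while (M1) gives $\nu_{x_n}(A_r) \leq r^{-2}C_\nu$. The main obstacle is the second integral, which requires total-variation control of $\nu_{x_n} - \nu_x$ on an annulus away from the origin; this is precisely where (M4)' intervenes, via Chebyshev: taking $\delta \leq r_0$ (which we may assume, since in applications the lemma is invoked with small $\delta$),
\[ |\nu_{x_n} - \nu_x|(A_r) \leq r^{-2}\int_{B_\delta}|z|^2\,|\nu_{x_n} - \nu_x|(dz) \leq r^{-2}\,o_{\delta}(1)\,|x_n - x| \xrightarrow[n\to\infty]{} 0. \]
Combined with the boundedness of $\Phi$ on $A_r$, this handles the second piece, and a standard $\eta/2$-argument (first pick $r$ to make the inner-ball contribution $\leq \eta/2$ uniformly in $n$ large, then let $n\to\infty$ with $r$ fixed to bound the annular contribution by $\eta/2$) concludes the proof.
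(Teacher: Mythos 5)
Your proof is correct, but it is noticeably more circuitous than the paper's. The paper does not split the domain at all: it expresses each integrand via the quadratic Taylor remainder, writes the difference as $\int_{B_\delta}(\Phi_n-\Phi)\,d\nu_{x_n} + \int_{B_\delta}\Phi\,d(\nu_{x_n}-\nu_x)$ on the \emph{whole} ball $B_\delta$, and bounds both pieces at once: the first by $\tfrac12\,\|\varphi_n-\varphi\|_{C^2(\overline{B_{2\delta}(x)})}\int_{B_\delta}|z|^2\,\nu_{x_n}(dz)\le\tfrac12 C_\nu\,\|\varphi_n-\varphi\|_{C^2}$ via (M1), and the second by $\tfrac12\|D^2\varphi\|_{L^\infty}\int_{B_\delta}|z|^2|\nu_{x_n}-\nu_x|(dz)\le\|D^2\varphi\|_{L^\infty}\,o_\delta(1)\,|x_n-x|$ directly via (M4)'. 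The crucial observation is that the $|z|^2$ weight in (M4)' exactly matches the quadratic growth of the Taylor remainder of the test function, so the singular and nonsingular regions never need to be treated separately and no $\eta/2$ bookkeeping is required.

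Your annulus split $B_\delta = B_r\cup(B_\delta\setminus B_r)$ is the more ``generic'' stability argument: control the singular part by smallness, then control the annulus where everything is bounded, using Chebyshev to extract total-variation continuity of $\nu_\cdot$ on $B_\delta\setminus B_r$ from (M4)'. It works, and it is an argument that would survive under weaker structural hypotheses (e.g.\ if one replaced the quadratic-moment hypothesis (M4)' with separate TV-continuity (M3) plus a smallness estimate at the origin). But in the present setting it discards the exact cancellation between the $|z|^2$ weight and the $C^2$-remainder, costing you the extra radius parameter $r$, the Chebyshev step (which wastes a factor $r^{-2}$), and the final two-step limiting argument. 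Both you and the paper implicitly assume $\delta<r_0$ when invoking (M4)' on $B_\delta$; your explicit remark to that effect is fair, and it is consistent with the only use of the lemma, which is in a limit $\delta\searrow 0$.
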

% ---------------------------------------------------------------------------------

\begin{proof}
In view of the regularity of the functions $\varphi_n$ and $\varphi$, direct computations give
\begin{eqnarray*}
    \I[B_\delta](x_n,\varphi_n) - \I[B_\delta](x,\varphi) 
    & = & \int_{B_\delta} \big( \varphi_n(x_n+z) - \varphi_n(x_n) - D\varphi_n(x_n) \big) \nu_{x_n}(dz)\\
    && - \int_{B_\delta} \big( \varphi(x+z) - \varphi(x) - D\varphi(x) \big) \nu_{x}(dz)\\
    %& = & \frac{1}{2} \int_{|z|\leq \delta}\int_0^1 D^2\varphi_n(x_n+\theta z)z\cdot z \;d\theta\, \nu_{x_n} (dz) \\
    %&& -\frac{1}{2} \int_{|z|\leq \delta}\int_0^1 D^2\varphi(x+\theta z)z\cdot z \;d\theta\, \nu_{x} (dz) \\ 
    & = & \frac{1}{2} \int_{|z|\leq \delta}\int_0^1 
    			\Big( D^2\varphi_n(x_n+\theta z) - D^2\varphi(x+\theta z) \Big) z\cdot z \;d\theta \;\nu_{x_n} (dz)\\
   &&	- \frac{1}{2} \int_{|z|\leq \delta}\int_0^1 
    			D^2\varphi(x+\theta z)z\cdot z\; d\theta \; \big(\nu_{x_n} (dz) - \nu_x(dz)\big).
\end{eqnarray*}
From here, in view of assumptions (M1) and (M4)', we obtain that, for any $\delta >0$, and for $n$ sufficiently large,
\begin{eqnarray*}
   | \I[B_\delta](x_n,\varphi_n) - \I[B_\delta](x,\varphi) |
    & \leq & \frac{1}{2} | \varphi_n - \varphi|_{C^2(\overline {B_{2\delta}(x)})} \int_{B_\delta} | z |^2 \nu_{x_n} (dz) \\
   && + \frac{1}{2}| D^2\varphi|_{L^\infty(\ovl {B(x)})} \int_{B_\delta} | z|^2 |\nu_{x_n} - \nu_x| (dz)\\
      & \leq & \frac12 C_\nu | \varphi_n - \varphi|_{C^2(\ovl {B_{2\delta}(x)})}  +  | D^2\varphi|_{L^\infty(B(x))} o_\delta(1) | x_n- x|,
\end{eqnarray*}
and the conclusion follows.
\end{proof}

% ---------------------------------------------------------------------------------
\begin{theorem}[Comparison Principle for Elliptic PIDEs with local diffusion]
\label{thm.comparison-diff}
Let $\lambda >0$ and $F$ be a diffusion nonlinearity satisfying the ellipticity assumption (E). Let $(\nu_\xi)_{\xi\in\R^N}\subset \mathcal M(\R^N)$ be a family of Lévy measures associated to the nonlocal operators $\left(\mathcal I_\xi(\cdot)\right )_{\xi\in\R^N}$ satisfying assumptions  (M1)--(M4)', and let $H$ be a Hamiltonian with superlinear growth, satisfying assumptions (H1)-(H2). 

Then, the comparison principle holds:  if $u\in USC(\R^N)\cap L^\infty(\R^N)$ is a viscosity subsolution of \eqref{eq:nHJ-diff} and  $v\in LSC(\R^N)\cap L^\infty(\R^N)$ is a viscosity supersolution of \eqref{eq:nHJ-diff}, 
then $u\leq v \text{ in } \R^N$.
\end{theorem}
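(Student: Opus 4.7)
The plan is to run the contradiction argument of Theorem~\ref{thm.comparison} almost verbatim, with the quadratic penalization now entering as the test function of the nonlocal Jensen-Ishii lemma from~\cite{BI08} rather than being treated as a $C^2$ function of a single variable. Assuming $M = \sup(u-v) > 0$, I would scale $\ovl u = \mu u$ with $\mu<1$ close to $1$ (so that $\ovl u$ becomes a subsolution with $F$ replaced by $\mu F(x, \mu^{-1}\cdot, \mu^{-1}\cdot)$ and $H$ by $\mu H(x, \mu^{-1}\cdot)$), localize with $\psi_\beta$, and double variables to maximize $\phi_{\eps,\beta}(x,y) = \ovl u(x) - v(y) - \eps^{-2}|x-y|^2 - \psi_\beta(y)$ at $(\ovl x, \ovl y)$; Lemma~\ref{lem.max} remains unchanged.

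At the maximum point, the nonlocal Jensen-Ishii lemma yields limiting semi-jets $(\ovl p, X) \in \ovl{\mathcal J^{2,+}}\, \ovl u(\ovl x)$ and $(\ovl p + \ovl q, Y) \in \ovl{\mathcal J^{2,-}}\, v(\ovl y)$, with $\ovl p = 2(\ovl x - \ovl y)/\eps^2$ and $\ovl q = -D\psi_\beta(\ovl y)$, obtained as limits of matrices built from $C^2$ approximations $\varphi_n$ of the penalization on $B_\delta$ and satisfying the matrix inequality~\eqref{eq:matrix_ineq} up to $o_\alpha(1)$ and $o_\beta(1)$ corrections (from the Jensen-Ishii smoothing parameter $\alpha$ and the $C^2$-bound on $\psi_\beta$). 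The ellipticity assumption~(E) then yields
\[ \mu F(\ovl x, \mu^{-1}\ovl p, \mu^{-1} X) - F(\ovl y, \ovl p + \ovl q, Y) \geq -\omega_F^R\!\left(|\ovl x - \ovl y| + \tfrac{|\ovl x - \ovl y|^2}{\eps^2}\right) - o_\alpha(1) - o_\beta(1), \]
which by Lemma~\ref{lem.max}(iv) is $o_\eps^\beta(1) + o_\alpha(1) + o_\beta(1)$. On the nonlocal side, Lemma~\ref{lem.est-NL-alpha} is exactly the tool needed to pass $\I[B_\delta](\ovl x, \varphi_n)$ and $\I[B_\delta](\ovl y, \varphi_n)$ to their intended limits as $n\to\infty$, and this is precisely where the stronger continuity (M4)' on the second moment of the total variation is essential. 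Outside $B_\delta$, the semi-jet information furnishes the gradient needed for the integral terms, so Lemmas~\ref{lem.est-NL-out} and~\ref{lem.est-NL-crown} apply without modification.

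With these ingredients in place, the remainder of the proof runs exactly as for Theorem~\ref{thm.comparison}: the Hamiltonian estimate (Lemma~\ref{lem.est-H}) combined with the nonlocal estimates (Lemmas~\ref{lem.est-NL-in}, \ref{lem.est-NL-out}, \ref{lem.est-NL-crown}) produces the gradient bound $|\ovl p| \leq C(1-\mu)^{-1/(m-1)}$ of Lemma~\ref{lem.p-bound}; the refined crown estimate of Lemma~\ref{lem.est-NL-crown-2} then uses this bound together with (M3); and sending the parameters to $0$ in the right order ($\delta \to 0$, then $\rho \to 0$, then $\eps, \alpha, \beta \to 0$) yields the contradiction $\lambda M/32 \leq o(1)$. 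The main obstacle is the bookkeeping of the additional $o_\alpha(1), o_\beta(1)$ errors coming from the Jensen-Ishii smoothing, and, more subtly, the compatibility of the $\mu$-scaling with assumption~(E): one must verify that the scaled nonlinearity $\mu F(x, \mu^{-1}\cdot, \mu^{-1}\cdot)$ still obeys~(E) with $\mu$-uniform moduli, so that the extra $(1-\mu)$-corrections are harmlessly absorbed into the Hamiltonian's coercive gain $\tfrac12(1-\mu) b_m |\ovl p|^m$.
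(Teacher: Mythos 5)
Your proposal follows essentially the same route as the paper's: both invoke the nonlocal Jensen-Ishii lemma from \cite{BI08}, use sup-convolutions $\varphi^\alpha$ together with Lemma~\ref{lem.est-NL-alpha} (which is where (M4)' enters) to control the inner nonlocal term, apply assumption~(E) to the second-order difference, and then re-run the Section~3 lemmas to obtain the gradient bound and the final contradiction. The one point where you are more careful than the paper is well-spotted: the paper's displayed viscosity inequality for $\ovl u = \mu u$ uses the unscaled $F(\ovl x, p, X_\alpha)$, whereas the scaled equation from Lemma~\ref{lem.max}(i) adapted to \eqref{eq:nHJ-diff} really produces $\mu F(\ovl x, \mu^{-1}p, \mu^{-1}X_\alpha)$; reconciling the two requires either that $F$ be positively $1$-homogeneous or that one check, as you flag, that~(E) survives the $\mu$-scaling with $\mu$-uniform moduli so the discrepancy is absorbed by the $(1-\mu)b_m|\ovl p|^m$ gain.
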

% ---------------------------------------------------------------------------------

\begin{remark}\em
The consideration of second-order terms requires the use of a nonlocal analog of Ishii-Jensen lemma. This is a key point, that we would like to discuss here briefly, providing references for the details that we omit.
The argument behind Ishii-Jensen lemma is the use use of inf- and sup-convolutions of $u$ and $v$ respectively, and then exploit the semiconvexity properties through Aleksandrov Theorem, see~\cite{CIL92}. Following this, if $\varphi \in C^2$ is a function for which $(x,y) \mapsto u(x) - v(y) - \varphi(x,y)$ has a maximum point at $(\bar x, \bar y)$, as in Proposition 3 in~\cite{BI08}, we define
\begin{align*}
u^\alpha(x, p) & = \sup_{|x - z| \leq 1} \Big{\{} u(z) + p(x - z) - \frac{1}{\alpha^2}|x - z|^2 \Big{\}}, \\
u_\alpha(y, q) & = \sup_{|y - z| \leq 1} \Big{\{} v(z) - q(y - z) + \frac{1}{\alpha^2}|y - z|^2 \Big{\}},
\end{align*}
where $p = D_x \phi(\bar x, \bar y)$, $q = -D_y \phi(\bar x, \bar y)$ and $\alpha > 0$. In order to use the viscosity inequalities for the convoluted functions, we need to consider the inf- and sup-convolution of the test function $\varphi$, which is already smooth. Then, in order to conclude Ishii-Jensen Lemma, we shall invoke Lemma~\ref{lem.est-NL-alpha} and for this, we need to prove that, if $\varphi \in C^2(\R^N)$, then 
$$\varphi^\alpha, \varphi_\alpha \to \varphi  \text{ in } C^2_{\rm loc}(\R^N).$$ 
Though it seems to be part of the folklore of viscosity solutions and convolutions, we discuss it briefly for completeness.
In general, if $\varphi \in C^2(\R^N)$ and $x \in \R^N$, taking $p = D\varphi(x)$ and $\alpha$ very small in terms of $\| Du \|_{L^\infty(B(x))}$, then 
$\displaystyle \sup_{x\in\R^N} \varphi^\alpha(x, p)$ is attained at a unique point  
$x + z^\alpha (x)$ with $|z^\alpha(x)| \in B_{1/2}$. This immediately implies that $\varphi^\alpha$ is $C^1(\R^N)$ and 
$ D\varphi^\alpha(x) = D\varphi(x + z^\alpha(x)), $
see the Introduction in~\cite{Barles94}. This, together with the first-order criteria that allows us to find $z^\alpha(x)$ and the Implicit Function Theorem, allows us to prove the map $x \mapsto z^\alpha(x)$ is in $C^1(\R ^N)$, and from here that $\varphi^\alpha$ is $C^2(\R ^N)$. Moreover, since we know that $z^\alpha(x) \to x$ as $\alpha \to 0$, from which we conclude that $\varphi^\alpha \to \varphi$ in $C^2_{\rm loc}(\R^N)$.
\end{remark}

\begin{proof}[Proof of Theorem~\ref{thm.comparison-diff}.]

We proceed as before and assume $M = \sup_{\R^N}(u-v)>0$, and consider as before, for $\eps, \beta>0$ and $\mu\in(0,1)$,
\[M_{\eps,\beta} = \sup_{\R^N\times \R^R}\big(\ovl u(x) - v(x) - \varphi(x,y)\big),\]
with 
$$\varphi(x,y) = \frac{| x-y|^2}{\eps^2} + \psi_\beta(y).$$
We next employ the nonlocal Jensen-Ishii lemma~\cite[Corollary 1.1]{BI08}. 

It follows that, for any $\delta >0$ there exists $\tilde \alpha>0$ such that, for $0<\alpha<\tilde \alpha$, there exists a family of limiting superjet and subjet $(p,X_\alpha)\in \ovl{\mathcal J^{2,+}} u(\ovl x)$, $(p+q,Y_\alpha)\in \ovl{\mathcal J^{2,-}} v(\ovl y)$ 
and a family of functions (given by sup-convolutions)
$$\varphi^\alpha(x,y) : = \sup_{| (x,y) - (x^*, y^*)| \leq 1} \left( \varphi(x^*, y^*) + p\cdot (x-x^*) + (p+q)\cdot (y-y^*) - \frac{1}{2\alpha} | (x,y)- (x^*, y^*)|^2\right)$$ 
satisfying
\begin{eqnarray*}
    p & = & D_x \varphi^\alpha(\ovl x, \ovl y) = D_x \varphi(\ovl x, \ovl y)= \frac{2}{\eps^2}(\ovl x-\ovl y)\\
    p+q & = & D_y \varphi^\alpha(\ovl x, \ovl y) =  - D_y \varphi(\ovl x, \ovl y) = \frac{2}{\eps^2}(\ovl x-\ovl y) - D\psi_\beta(\ovl y),
\end{eqnarray*}
and 
\[
\begin{bmatrix} 
X_\alpha &  0 \\
0 & -Y_\alpha 
\end{bmatrix} 
\leq D^2\varphi^\alpha(\ovl x,\ovl y) 
= D^2\varphi (\ovl x,\ovl y) + o_\alpha(1) 
= \frac{2}{\eps^2}
\begin{bmatrix} 
 I & -I \\
-I &  I 
\end{bmatrix} + o_\beta(1) + o_\alpha(1),
\]
such that the following viscosity inequalities hold
\begin{eqnarray*}
\lambda \ovl{u}(\ovl{x}) - F(\ovl x, p, X_\alpha)
	- \I[B_\delta](\ovl{x},  \varphi^\alpha(\cdot, {\ovl y})) 
    - \I[B_\delta^c](\ovl{x}, \ovl{p}, \ovl{u}) 
	+ \mu H(\ovl{x},\mu^{-1}\ovl{p}) & \leq & 0, \\
\lambda v(\ovl y) - F(\ovl y, p + q, Y_\alpha)
	- \I[B_\delta](\ovl y, -\varphi^\alpha({\ovl x},\cdot))
    - \I[B_\delta^c](\ovl y, \ovl{p} + \ovl{q}, v) 
	+ H(\ovl y, \ovl{p} + \ovl{q}) & \geq & 0.
\end{eqnarray*}
Subtracting the two inequalities, we obtain
\begin{eqnarray}  \label{eq:visc-ineq-2}
 \lambda (\ovl{u}(\ovl{x}) - v(\ovl y))  
 	& + & 	F(\ovl y, p + q, Y_\alpha) -  F(\ovl x, p, X_\alpha)
        +\mu H(\ovl{x}, \mu^{-1}\ovl{p}) -  H(\ovl y, \ovl{p} + \ovl{q}) \\ \nonumber
	& \leq & 	
         \I[B_\delta](\ovl{x},  \varphi^\alpha(\cdot, {\ovl y})) 
         -\I[B_\delta](\ovl y, - \varphi^\alpha({\ovl x},\cdot))
        + \I[B_\delta^c](\ovl{x}, \ovl{p}, \ovl{u}) 
        - \I[B_\delta^c](\ovl y, \ovl{p} + \ovl{q}, v).
\end{eqnarray}
From assumption (E), for $R = C\eps + 2/\beta$, we have
\begin{equation} \label{eq:est-F}
F(\ovl y, p + q, Y_\alpha) -  F(\ovl x, p, X_\alpha) \geq -\omega_F^R(| \ovl x-\ovl y| + \frac{1}{\eps^2}| \ovl x - \ovl y|^2) + o_\beta(1) + o_\alpha(1).
\end{equation}
Taking into account the previous remark, $\varphi^\alpha \to \varphi $ in $C^2_{\rm loc}(\ovl {B(\ovl x)}\times \ovl {B(\ovl y)})$ as $\alpha\to 0$. Applying Lemma~\ref{lem.est-NL-alpha}, we deduce that, all $\delta >0$ and for all $0<\alpha<\tilde \alpha $,
\begin{eqnarray}\label{eq:est-NL-alpha}
    \I[B_\delta](\ovl y, \varphi^\alpha({\ovl x},\cdot))
        - \I[B_\delta](\ovl{x}, - \varphi^\alpha(\cdot, {\ovl y}))
    = 
    \I[B_\delta](\ovl y, \varphi({\ovl x},\cdot))
        - \I[B_\delta](\ovl{x},  \varphi(\cdot, {\ovl y})) + o_\alpha(1).
\end{eqnarray}
Plugging \eqref{eq:est-F} and \eqref{eq:est-NL-alpha} into inequality \eqref{eq:visc-ineq-2} and letting $\alpha\to 0$, we obtain
\begin{eqnarray*}
 \lambda (\ovl{u}(\ovl{x}) - v(\ovl y)) &- &\omega_F^R(| \ovl x-\ovl y| + \frac{1}{\eps^2}| \ovl x - \ovl y|^2)
        +\mu H(\ovl{x}, \mu^{-1}\ovl{p}) -  H(\ovl y, \ovl{p} + \ovl{q}) \\ \nonumber
	& \leq & 	
         \I[B_\delta](\ovl{x},  \varphi(\cdot, {\ovl y})) 
         -\I[B_\delta](\ovl y, - \varphi({\ovl x},\cdot))
        + \I[B_\delta^c](\ovl{x}, \ovl{p}, \ovl{u}) 
        - \I[B_\delta^c](\ovl y, \ovl{p} + \ovl{q}, v). 
\end{eqnarray*}
Recalling that $| \ovl x-\ovl y| = o_\eps(1)$ and $| \ovl x-\ovl y|^2/\eps^2 = o_\eps^\beta(1)$, we arrive at 
\begin{eqnarray*}
 \lambda (\ovl{u}(\ovl{x}) - v(\ovl y)) 
 & + &  o_\eps(1) + o_\eps^\beta(1) + 
        \mu H(\ovl{x}, \mu^{-1}\ovl{p}) -  H(\ovl y, \ovl{p} + \ovl{q}) \\ \nonumber
 & \leq & 	
         \I[B_\delta](\ovl{x},  \varphi(\cdot, {\ovl y})) 
         -\I[B_\delta](\ovl y, - \varphi({\ovl x},\cdot))
        + \I[B_\delta^c](\ovl{x}, \ovl{p}, \ovl{u}) 
        - \I[B_\delta^c](\ovl y, \ovl{p} + \ovl{q}, v) .
\end{eqnarray*}
From here on, the proof follows exactly the lines of the one of Theorem~\ref{thm.comparison}.
\end{proof}

%---------------------------------------------------------------------------------
\subsection{Parabolic PIDEs}
%---------------------------------------------------------------------------------

We now discuss the case of time-dependent problems involving integro-differential operators  and superlinear and coercive Hamiltonians, of the form
\begin{equation}\label{eq:nHJ-t}
\begin{cases}
	\partial_t u(x, t) - \mathcal{I}_x u(x,t) + H(x, t, Du(x, t)) = 0 & \quad  \text{ in }  \R^N \times (0, T), \\
	u(x, 0) = u_0(x) & \quad \text{ in } \R^N.
\end{cases}
\end{equation}	 
We consider a nonlocal operator of Lévy type, with the Lévy measures $(\nu_\xi)_{\xi\in\R^N}\subset \mathcal M(\R^N)$ acting on the space-variable 
\[ \mathcal I_\xi u(x,t) = \int_{\R^N} \big( u(x + z,t) - u(x,t) - \1_B(z) Du(x,t) \cdot z \big) \nu_\xi(dz).\]
The case of time dependent  Lévy measures $(\nu_{(\xi,t)})_{(\xi,t)\in\R^N\times(0,T]}\subset \mathcal M(\R^N\times(0,T))$ could be partially addressed in this framework, by accordingly adapting the assumptions with t-dependence. However, to ensure completeness of this framework, we prefer to address this problem in a future work.

We assume the family of measures satisfies exactly the same set of assumptions as in the stationary case and, since we allow a time-dependent in the Hamiltonian, we upgrade to assumptions on $H$ to the case of space-time coefficients.

\begin{itemize}
\item[(H1-t)] $H:\R^N\times[0,T]\times\R^N\to\R$  is continuous on $\R^N\times[0,T]$ and 
		there exists $m>1$ and two moduli of continuity 
		$\omega^1_H,\omega^2_H: \R_+\to\R_+$ 
		such that, for all $t\in [0,T]$, and $x,y,p,q\in\R^N$, with $|q|\leq 1$,
		\[ H(y, t, p + q) - H(x, t, p) \leq 
			\omega_H^1(|x-y|)(1 + | p |^m) +
			\omega_H^2(| q |)(1 + | p |^{m - 1}). \]
\item[(H2-t)]  There exists $m>1$, some constants $b_m, b_0 >0$, $r_0>0$
		 and $\mu_0\in (0,1)$ such that for all $\mu\in[\mu_0,1]$, $t\in[0,T]$
		 and $x,p\in \R^N$ with $|p|\ge r_0$, 
		\[ \mu H(x, t,\mu^{-1} p) - H(x,t,p) \ge (1-\mu) (b_m | p |^m - b_0).\]	
\end{itemize}

\begin{theorem}[Comparison principle for time-dependent problems]\label{thm.comparison-time}
Let $(\nu_\xi)_{\xi\in\R^N}\subset \mathcal M(\R^N)$ be a family of Lévy measures associated with the nonlocal operators $\left(\mathcal I_\xi(\cdot)\right )_{\xi\in\R^N}$ satisfying assumptions  (M1)--(M4), and let $H$ be a Hamiltonian satisfying assumptions \text{(H1-t)} -  \text{(H2-t)}. 

Then, the parabolic comparison principle holds:  if $u\in USC(\R^N\times[0,T])\cap L^\infty(\R^N\times[0,T])$ is a viscosity subsolution of \eqref{eq:nHJ-t} and  $v\in LSC(\R^N\times[0,T])\cap L^\infty(\R^N\times[0,T])$ is a viscosity supersolution of \eqref{eq:nHJ-t}, so that 
$u(\cdot,0) \le u_0 \le v(\cdot,0)\; \text{ in } \;\R^N,$
with $u_0\in BUC(\R^N)$,
then 
$u \leq v \; \text{ in } \; \R^N\times [0,T].$
\end{theorem}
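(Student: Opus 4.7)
The plan is to mimic the stationary argument of Theorem~\ref{thm.comparison}, turning the parabolic problem into one where an analog of the $\lambda M$ term appears after subtracting the viscosity inequalities. The role of $\lambda$ will be played by a time-penalization parameter, while doubling in space, localization, subsolution rescaling, and the Hamiltonian/nonlocal estimates are reused essentially verbatim from Section~\ref{sec:main-proof}. Assuming for contradiction that $M := \sup_{\R^N \times [0,T]}(u - v) > 0$, I would set $\ovl u := \mu u$ for $\mu \in (\mu_0, 1)$ close to $1$, so that $\ovl u$ is a viscosity subsolution of the equation with Hamiltonian $\mu H(x,t,\mu^{-1}\cdot)$ and $\sup(\ovl u - v) \geq M/2$. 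The natural penalized functional is
$$
\Phi(x,y,t,s) = \ovl u(x,t) - v(y,s) - \frac{|x-y|^2}{\eps^2} - \frac{(t-s)^2}{\eta^2} - \psi_\beta(y) - \frac{\zeta}{T-t},
$$
where $\psi_\beta$ is the space-localization of Lemma~\ref{lem.localization} and $\zeta > 0$ is a small parameter. Doubling in time via $\eta$ is used in order to legitimately apply the one-variable viscosity inequalities at the respective maximum and minimum points. A Lemma~\ref{lem.max}-type analysis then gives, provided the parameters are fixed in the order $\zeta \ll M$, $(1-\mu) \ll M/\|u_0\|_\infty$, $\beta$ small, and $\eps,\eta$ small, that $M_{\eps,\eta,\beta,\zeta} := \sup \Phi \geq M/4$ is attained at some point $(\ovl x,\ovl y,\ovl t,\ovl s)$ confined to a ball of radius $O(1/\beta)$ in space and in $(0,T)^2$ in time, with $|\ovl x - \ovl y|, |\ovl t - \ovl s| = o_{\eps,\eta}(1)$.

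The delicate boundary check is the exclusion of $\ovl t = 0$ or $\ovl s = 0$: the case $\ovl t = T$ is ruled out by the $\zeta/(T-t)$ term, but the initial time requires invoking $u_0 \in BUC(\R^N)$. If $\ovl t = 0$, sending $\eta \to 0$ forces $\ovl s \to 0$, and using $u(\cdot,0) \leq u_0 \leq v(\cdot,0)$ together with the upper/lower semicontinuity of $u$ and $v$ and the uniform continuity of $u_0$ yields
$$
M_{\eps,\eta,\beta,\zeta} \leq \mu u_0(\ovl x) - u_0(\ovl y) + o_{\eps,\eta}(1) \leq (\mu - 1)\|u_0\|_\infty + o_{\eps,\eta}(1),
$$
which contradicts $M_{\eps,\eta,\beta,\zeta} \geq M/4$ once $\mu$ has been chosen. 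At such an interior maximum, the viscosity inequalities read, with $\ovl p = 2(\ovl x - \ovl y)/\eps^2$ and $\ovl q = -D\psi_\beta(\ovl y)$,
\begin{align*}
\tfrac{2(\ovl t - \ovl s)}{\eta^2} + \tfrac{\zeta}{(T-\ovl t)^2} - \I[B_\delta](\ovl x, \varphi_{\ovl y}) - \I[B_\delta^c](\ovl x, \ovl p, \ovl u) + \mu H(\ovl x, \ovl t, \mu^{-1}\ovl p) &\leq 0, \\
\tfrac{2(\ovl t - \ovl s)}{\eta^2} - \I[B_\delta](\ovl y, \varphi_{\ovl x}) - \I[B_\delta^c](\ovl y, \ovl p + \ovl q, v) + H(\ovl y, \ovl s, \ovl p + \ovl q) &\geq 0,
\end{align*}
and subtraction cancels the $\eta^{-2}$ contribution. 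What remains is the strictly positive quantity $\zeta/(T-\ovl t)^2 \geq \zeta/T^2$ on the left, which takes precisely the role of $\lambda M/16$ in the stationary proof.

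From this point on, assumptions (H1-t)-(H2-t) — the $t$-uniform versions of (H1)-(H2) — guarantee that Lemmas~\ref{lem.est-H}, \ref{lem.est-NL-in}, \ref{lem.est-NL-out}, \ref{lem.est-NL-crown}, \ref{lem.p-bound} and \ref{lem.est-NL-crown-2} apply word for word with $t = \ovl t$ and $s = \ovl s$ frozen in the estimates, because none of these lemmas interacts with time other than through the value of $H$. One obtains the bound $|\ovl p| \leq C(1-\mu)^{-1/(m-1)}$, and then the refined crown estimate yields
$$
\tfrac{\zeta}{T^2} \leq C\omega_{1,\rho}(|\ovl x - \ovl y|) + o_\rho(1) + \tfrac{1}{\eps^2}o_\delta^{\ovl x, \ovl y}(1) + o_\eps(1) + o_\beta(1),
$$
which, after sending $\delta \to 0$, $\eta \to 0$, $\eps \to 0$, $\beta \to 0$ and finally $\rho \to 0$, contradicts $\zeta/T^2 > 0$. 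The main obstacle I expect is the careful bookkeeping of the parameter hierarchy $(\zeta, \mu, \beta, \eps, \eta)$ needed simultaneously to keep $M_{\eps,\eta,\beta,\zeta} \geq M/4$, to prevent concentration at $t = 0$ via the $BUC$ assumption on $u_0$, and to match constants with the stationary nonlocal estimates; once this hierarchy is fixed, the proof is a faithful translation of Section~\ref{sec:main-proof}.
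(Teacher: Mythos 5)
Your argument follows essentially the same route as the paper's proof: double variables in both space and time with parameters $\eps,\eta$, add a localization $\psi_\beta$ and a time penalization, cancel the $2(\ovl t-\ovl s)/\eta^2$ term by subtracting the two viscosity inequalities, and then reuse the stationary nonlocal/Hamiltonian estimates of Section~\ref{sec:main-proof} since they interact with time only through the coefficients of $H$. The paper uses the linear penalization $\lambda t$ (so the leftover positive term is $\lambda$) while you use the singular $\zeta/(T-t)$ (leftover $\zeta/(T-\ovl t)^2$); both are standard and interchangeable here, and you are more explicit than the paper about excluding $\ovl t=0$ via the $BUC$ hypothesis on $u_0$. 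One small slip in that step: from $\mu u_0(\ovl x)-u_0(\ovl y)=\mu\bigl(u_0(\ovl x)-u_0(\ovl y)\bigr)+(\mu-1)u_0(\ovl y)$ the correct upper bound is $(1-\mu)\|u_0\|_\infty + o_{\eps,\eta}(1)$, not $(\mu-1)\|u_0\|_\infty + o_{\eps,\eta}(1)$ as written; the conclusion is unaffected once $\mu$ is close enough to $1$ so that $(1-\mu)\|u_0\|_\infty<M/4$, but the inequality as stated has the wrong sign.
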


\begin{remark}\normalfont The notions of viscosity sub and supersolutions are defined in a similar manner to the stationary case. We require the sub- and supersolution to satisfy the equation in $\R^N\times(0, T)$ only, and not at the final time $T$. However, it can be checked, similarly to the first order case, that the notions of subsolutions and supersolutions can be considered to the extended set $ \R^N\times(0, T]$.
\end{remark}

\begin{proof} %[Proof of Theorem \ref{thm.comparison-time}]
The key difference with the proof from the stationary case is that we need to double the variables both in space in time. In addition to this, we need to subtract a linear term in time, in order to overcome the absence of $u$ in the equation (replaced by the time derivative). Hence, we assume again that
$\displaystyle M  = \sup_{\R^N\times [0,T]}(u-v) >0$
and note that, for $\eps,\eta, \lambda, \beta>0$ sufficiently small, and $\mu$ close to $1$, we have that
\[ \overline M  : = \sup_{\R^{2N}\times [0,T]^2}\left( \ovl u(x,t) - v(y,s) - \phi(x,y,t,s) \right) >0,\]
with
\[ \varphi(x,y,t,s) = \frac{| x - y|^2}{\eps^2} + \frac{(t - s)^2}{\eta^2} + \psi_\beta(y) + \lambda t. \]
The maxima will be attained at some $(\ovl x, \ovl y, \ovl t, \ovl s)$ with $\ovl t, \ovl s >0$, and as $\eta \to 0$, they will converge to some $(\hat x, \hat y, t^*)$ and correspondingly we will have $\ovl p\to \hat p$, and $\ovl q\to \hat q$.
With the same notations as in the main comparison proof, we  are led to the following viscosity inequalities at the maxima points $(\ovl x, \ovl y, \ovl t, \ovl s)$:
\begin{eqnarray*}
 \lambda + \frac{2(\ovl t-\ovl s)}{\eta^2}
	- \I[B_\delta](\ovl{x},  \ovl t, \varphi_{\ovl y,\ovl s}) - \I[B_\delta^c](\ovl{x}, \ovl t, \ovl{p}, \ovl{u}) 
	+ \mu H(\ovl{x},\ovl t, \mu^{-1}\ovl{p}) & \leq & 0, \\
 \frac{2(\ovl t-\ovl s)}{\eta^2}
	- \I[B_\delta](\ovl y, \ovl s, \varphi_{\ovl x,\ovl t}) - \I[B_\delta^c](\ovl y, \ovl s, \ovl{p} + \ovl{q}, v) 
	+ H(\ovl y,  \ovl s, \ovl{p} + \ovl{q}) & \geq & 0.
\end{eqnarray*}
Subtracting the two inequalities, we obtain
\begin{eqnarray*}
 \lambda 
 	& + & 	\mu H(\ovl{x},\ovl t, \mu^{-1}\ovl{p}) -  H(\ovl y, \ovl s, \ovl{p} + \ovl{q}) \\ \nonumber
	& \leq & 	\I[B_\delta](\ovl{x}, \ovl t,  \varphi_{\ovl y, \ovl s})  - \I[B_\delta](\ovl y, \ovl s, \varphi_{\ovl x, \ovl t})  + 
			\I[B_\delta^c](\ovl{x}, \ovl t, \ovl{p}, \ovl{u}) - \I[B_\delta^c](\ovl y, \ovl s, \ovl{p} + \ovl{q}, v).
\end{eqnarray*}
Sending $\eta\to 0$, we have, in view of the continuity of $H$, that
\begin{eqnarray*}
 \lambda 
 	& + & 	\mu H(\hat x, t^*, \mu^{-1}\hat p) -  H(\hat y, t^*, \hat p + \hat q) \\ \nonumber
	& \leq & 	\I[B_\delta](\hat x, t^*,  \varphi_{\hat y, t^*})  - \I[B_\delta](\hat y, t^*, \varphi_{\hat x, t^*})  + 
			\I[B_\delta^c](\hat x, t^*, \hat p, \ovl{u}) - \I[B_\delta^c](\hat y, t^*, \hat p + \hat q, v).
\end{eqnarray*}
In view of (H1-t) and (H2-t) the estimate obtained in Lemma \ref{lem.est-H} still holds. The above inequality now takes a similar form to the one in the stationary case
\begin{eqnarray*}
  \frac{\lambda}{2}
 	& + & 	\frac12(1 - \mu)b_m |\hat p |^m - o_{\beta}(1) - o_{\eps}(1)\\ \nonumber
	& \leq & 	\I[B_\delta](\hat x, t^*,  \varphi_{\hat y, t^*})  - \I[B_\delta](\hat y, t^*, \varphi_{\hat x, t^*})  + 
			\I[B_\delta^c](\hat x, t^*, \hat p, \ovl{u}) - \I[B_\delta^c](\hat y, t^*, \hat p + \hat q, v).
\end{eqnarray*}
From here on, we can employ all the nonlocal estimates provided in the previous section and reach the conclusion.
\end{proof}

\subsection{Guillen-Mou-\'Swi\c{e}ch case}
% --------------------------------------------------------------------------------

In \cite{GMS19} the authors introduced optimal transport techniques to obtain comparison results for nonlocal equations with, a priori, no restriction on the order of the operators. They impose a Lipschitz condition with respect to the $p$--Wasserstein metric, i.e.,
\begin{itemize}
\item[(M4)''] There exists $p \in [1, 2]$ and $C_p>0$ such that, for all $x,y\in\R^N$,
                \[ W_p(\nu_{x}, \nu_{y})(B)\leq C_p |x-y|. \]
\end{itemize}
The exponent $p\in[1,2]$ is related to the singularity at $z=0$ for the Lévy measures: for a nonlocal operator of order $\sigma$, one has $p>\sigma$.
However, it is rather difficult to construct measures for which (M4)'' holds in the case $p>1$,
which in fact limits the use of such a hypothesis.
For example, in the case of measures with density 
\[\nu_\xi(z) = K(\xi,z) dz,\]
it has only been shown  that condition (M4)'' is satisfied when $p=1$, restricting the nonlocal operator to order $\sigma \in (0,1)$, see \cite[Corollary 4.5]{GMS19}. As a matter of fact, it is natural to expect $W_p$ to be only $1/p-$H\"older continuous, provided the kernel $K$ is Lipschitz in $\xi$. See \cite[Example 5.12]{GMS19} and Section \S~\ref{ex:kernel} for details. In addition, the authors show that the comparison principle still holds  if the Wasserstein distance is $1/2-$ H\"older continuous, provided the subsolution or the supersolution is $C^1$. 
Within our approach, we establish the comparison principle for $1/2-$ H\"older continuous $p$-Wasserstein distances, without having to assume a priori regularity on either the sub/supersolution.

% ---------------------------------------------------------------------------------
\begin{theorem}[Comparison Principle -- $W_p$] \label{thm.comparison-Wp}
Let $\lambda >0$, $(\nu_\xi)_{\xi\in\R^N}\subset \mathcal M(\R^N)$ be a family of Lévy measures associated with the nonlocal operators $\left(\mathcal I_\xi(\cdot)\right )_{\xi\in\R^N}$ satisfying assumptions  (M1), (M2), (M3) with $r=1$ and (M4)'' with $p>1$, and let the  Hamiltonian $H$ satisfy assumptions (H1)-(H2). 

Then, the comparison principle holds:  if $u\in USC(\R^N)\cap L^\infty(\R^N)$ is a viscosity subsolution of \eqref{eq:nHJ} and  $v\in LSC(\R^N)\cap L^\infty(\R^N)$ is a viscosity supersolution of \eqref{eq:nHJ}, 
then $u\leq v \text{ in } \R^N$.
\end{theorem}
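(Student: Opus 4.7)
The strategy is to mimic the proof of Theorem~\ref{thm.comparison}, with one crucial modification: the quadratic penalization $|x - y|^2/\eps^2$ is replaced by a (smoothed) $p$-power penalization, chosen so that the key nonlocal crown estimate naturally produces $W_p^p$ rather than $W_2^2$, which is then controllable by (M4)''.

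Concretely, after the scaling $\ovl u = \mu u$ and the localization $\psi_\beta$, I would double variables using
\begin{equation*}
M_{\eps, \beta} = \sup_{x,y \in \R^N} \Bigl\{ \ovl u(x) - v(y) - \frac{1}{p\eps^p}(|x - y|^2 + \kappa^2)^{p/2} - \psi_\beta(y) \Bigr\},
\end{equation*}
with $\kappa = \kappa(\eps) \to 0$ providing $C^2$-smoothness (only needed for $p \in (1,2)$). The analogs of Lemma~\ref{lem.max} produce a maximum point $(\ovl x, \ovl y)$ with $|\ovl x - \ovl y| = o_\eps(1)$, and the test gradient reads $\ovl p = \eps^{-p}(|\ovl x - \ovl y|^2 + \kappa^2)^{(p-2)/2}(\ovl x - \ovl y)$, of magnitude $|\ovl p| \sim |\ovl x - \ovl y|^{p-1}/\eps^p$. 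Lemma~\ref{lem.est-H} carries over verbatim, producing the dominant Hamiltonian term $\tfrac12(1-\mu)b_m|\ovl p|^m$, and Lemma~\ref{lem.est-NL-out} also carries over since it only uses (M2) and (M3) at $r = 1$, both assumed. The new ingredient is the crown estimate. Choosing an admissible coupling $\gamma \in \mathrm{Adm}_B(\nu_{\ovl x}, \nu_{\ovl y})$ and applying the maximum inequality at $(x,y) = (\ovl x + z_1, \ovl y + z_2)$, the convexity bound
\begin{equation*}
(|a+h|^2 + \kappa^2)^{p/2} - (|a|^2 + \kappa^2)^{p/2} - p(|a|^2 + \kappa^2)^{(p-2)/2}\, a \cdot h \leq C_p|h|^p
\end{equation*}
(valid for $p \in (1,2]$ by a standard argument; Taylor expansion suffices for $p \geq 2$) lets me bound
\begin{equation*}
\I[B \setminus B_\delta](\ovl x, \ovl p, \ovl u) - \I[B \setminus B_\delta](\ovl y, \ovl p + \ovl q, v) \leq \frac{C}{\eps^p} \int_{B \times B} |z_1 - z_2|^p\, d\gamma + \frac{1}{\eps^p} o_\delta^{\ovl x, \ovl y}(1) + o_\beta(1),
\end{equation*}
where the small-$\delta$ error uses a $p$-moment analog of Proposition~\ref{lem.Gigli}. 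Taking the infimum over $\gamma$ and invoking (M4)'' bounds the leading term by $C|\ovl x - \ovl y|^p/\eps^p \sim C|\ovl x - \ovl y| \cdot |\ovl p|$.

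Assembling everything and letting $\delta \to 0$, the subtracted viscosity inequality reads
\begin{equation*}
\frac{\lambda M}{16} + \frac{1}{2}(1-\mu)b_m |\ovl p|^m \leq C|\ovl x - \ovl y| \cdot |\ovl p| + o_\eps(1) + o_\beta(1).
\end{equation*}
Exactly as in Lemma~\ref{lem.p-bound}, this first yields $|\ovl p| \leq C/(1-\mu)^{1/(m-1)}$, and then, using $|\ovl x - \ovl y| = o_\eps(1)$ with $|\ovl p|$ bounded, the right-hand side is $o_\eps(1) + o_\beta(1)$, contradicting $M > 0$ as $\eps, \beta \to 0$. The main obstacle is the absence of an intermediate radius $\rho$ at which to split $B \setminus B_\delta$ (since (M3) is available only at $r = 1$): the crown estimate must absorb \emph{all} of $B \setminus B_\delta$ using only $W_p$-control, so the coupling has to be chosen on the full ball $B$ from the outset, and the boundary-to-origin mass terms must be estimated via a $p$-analog of Proposition~\ref{lem.Gigli}. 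A subsidiary technicality is calibrating $\kappa = \kappa(\eps)$ so that the smoothing error is absorbed in $o_\eps(1)$.
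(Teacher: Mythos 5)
Your strategy of replacing the quadratic penalization by a (smoothed) $p$-power one so that the crown estimate naturally produces $W_p^p$ is plausible in spirit, but it is \emph{not} what the paper does, and it runs into a genuine obstruction. The paper keeps the penalization $|x-y|^2/\eps^2$ unchanged: the estimate from Lemma~\ref{lem.est-NL-crown} (with $\rho = 1$) gives $\frac{5}{\eps^2}\int_{B\times B}|z_1-z_2|^2\,d\gamma$ plus the usual $\frac{1}{\eps^2}o_\delta^{\ovl x,\ovl y}(1)+o_\beta(1)$ errors, and the passage from $W_2$ to $W_p$ is then achieved by the elementary pointwise bound $|z_1-z_2|^2 \leq 2^{2-p}|z_1-z_2|^p$, valid on $B\times B$ since $|z_1-z_2|\leq 2$ and $p\leq 2$. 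All the $\delta$- and $\beta$-errors remain \emph{second-moment} quantities, controlled by (M1). Then $W_p^p(\nu_{\ovl x},\nu_{\ovl y})/\eps^2 \leq C_p^p |\ovl x-\ovl y|^p/\eps^2 = \frac12 C_p^p |\ovl p|\,|\ovl x-\ovl y|^{p-1}$, and the boundedness of $\ovl p$ followed by the contradiction (using $p>1$) proceed exactly as in your closing paragraph.

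The obstruction in your route is precisely the ``$p$-analog of Proposition~\ref{lem.Gigli}'' that you invoke for the small-$\delta$ error. The $2$-moment version, $\int_{B\times B}|z_1|^2\,d\gamma\leq 4\int_B|z|^2\,\nu_1(dz)$, is finite by (M1); but your $\T_2,\T_3$ terms on $A_{\rho,\delta}\times B_\delta$ and $B_\delta\times A_{\rho,\delta}$, after the split $|z_1|^p\leq 2^{p-1}(|z_1-z_2|^p+|z_2|^p)$, require $\int_{B_\delta}|z|^p\,\nu_\xi(dz)$ to be finite and vanish as $\delta\to 0$. Neither follows from the standing assumptions: (M1) controls only the $2$-moment, and (M4)'' conveys no moment information whatsoever — it holds trivially, for instance, for a constant family $\nu_\xi\equiv\nu$ regardless of the singularity of $\nu$. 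For $\nu$ of order $\sigma\in(p,2)$ (entirely permitted since $p<2$ is allowed), $\int_{B_\delta}|z|^p\,\nu(dz)=+\infty$, so the error term cannot be closed. The hybrid bound $|z|^p\leq\delta^{p-2}|z|^2$ on $B_\delta$ does not save it either, producing $\delta^{p-2}\int_{B_\delta}|z|^2\,\nu(dz)\sim\delta^{p-\sigma}\to\infty$. This is exactly why the paper retains the quadratic penalization and performs the $W_2\to W_p$ conversion only \emph{after} every moment-type error has been expressed through second moments.
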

% ---------------------------------------------------------------------------------

\begin{proof}
The proof is literally the same, at a sole modification in Lemma \ref{lem.est-NL-crown}. 
More precisely, on $B\setminus B_\delta$, we have that
\begin{eqnarray*}
\I[B\setminus B_\delta](\ovl{x}, \ovl p, \ovl{u}) - 
	    \I[B\setminus B_\delta](\ovl y, \ovl p + \ovl q, v)
& \le  & \frac{5}{\eps^2}  \int_{B\times B} |z_1 - z_2|^2 d\gamma(z_1, z_2) + 
         \frac{1}{\eps^2} o_{\delta}^{\ovl x,\ovl y}(1) + o_\beta(1)\\
& \le  & \frac{5}{\eps^2} 2^{2-p} \int_{B\times B} |z_1 - z_2|^p d\gamma(z_1, z_2) + 
        \frac{1}{\eps^2} o_{\delta}^{\ovl x,\ovl y}(1) + o_\beta(1).
\end{eqnarray*}
Taking infimum in $\gamma$ and employing the new assumption (M4)", we conclude that
\begin{eqnarray}
  \I[B\setminus B_\delta](\ovl{x},\ovl p,\ovl{u}) -
  \I[B\setminus B_\delta](\ovl y,\ovl p+\ovl q,v) 
&\leq& \frac{5}{\eps^2} 2^{2-p}  W_p^p(\nu_{\ovl{x}}, \nu_{\ovl{y}} )(B)
            +  \frac{1}{\eps^2} o_{\delta}^{\ovl x,\ovl y}(1)
    +o_\beta(1)  \nonumber \\
&\leq& \tilde C_p \frac{|\bar x-\bar y|^p}{\epsilon^2}+
    \frac{1}{\eps^2} o_{\delta}^{\ovl x,\ovl y}(1)
    +o_\beta(1).
     \label{eq:est-NL-crown-Wp}
\end{eqnarray} 

Moreover, we still have the boundedness of $\bar p$ as in Lemma~\ref{lem.p-bound}. Indeed, it follows from the viscosity inequality \eqref{eq:visc-ineq},  and the estimates provided in  \eqref{eq:est-fct},  \eqref{eq:est-H}, \eqref{eq:est-NL-in},  \eqref{eq:est-NL-out-unif},  \eqref{eq:est-NL-crown-Wp}, that 
\begin{eqnarray*} 
 \lambda \frac{M}{16} + \frac12(1 - \mu)b_m |\ovl{p} |^m  
 & \leq &  \tilde C_p |\ovl p| \;|\ovl x - \ovl y|^{p-1} + 
    \frac{1}{\eps^2}  o_\delta^{\ovl x,\ovl y}(1)  +  
    o_{\eps}(1) + o_{\beta}(1).
 \end{eqnarray*} 
Using that  $|\ovl x - \ovl y| = o_\eps(1)$ and $p\geq 1$, the conclusion of Lemma \ref{lem.p-bound} follows.

We then reiterate the estimate above, and use that $\bar p$ is bounded, to obtain
\begin{eqnarray*} 
 \lambda \frac{M}{16} 
& \leq  &  C |\ovl x - \ovl y|^{p-1}  + 
    \frac{1}{\eps^2}  o_\delta^{\ovl x,\ovl y}(1)  +  
    o_{\eps}(1) + o_{\beta}(1).
 \end{eqnarray*} 
Letting $\delta\to 0$, then $\eps,\beta\to 0$, and recalling that $p>1$, we arrive again at a contradiction.
\end{proof}

% =================================================================================
\section{Examples}\label{sec:examples}
% =================================================================================

In this section we give examples of nonlocal operators and PIDEs that fit into our framework, and for which comparison results hold.
We first detail different forms that L\'evy operators can take in order to satisfy assumptions (M1)--(M4). We then present a series of the PIDEs involving these operators for which we have uniqueness (and existence) results.

% ---------------------------------------------------------------------------------
\subsection{Measures with density}\label{ex:kernel}
% ---------------------------------------------------------------------------------

Let $(\nu_\xi)_{\xi \in\R^N}$ be a family of Lévy measures with density kernels:
$$\nu_{x}(dz) = K(x,z)dz,$$ where
$K : \R^N \times \R^N \rightarrow \R_+$ is a Carathéodory function. The nonlocal Lévy operator then writes 
\begin{equation*}
  \I_\xi u(x) = \int_{\R^N} \left( u(x + z) - u(x) - \pmb{1}_B(z) Du(x) \cdot z \right) K(\xi, z) dz.
\end{equation*}
We assume the kernel is degenerate elliptic of order $\sigma$, and Lipschitz continuous with respect to $\xi$. More precisely, let $K$ satisfy the following: 
\begin{itemize}
\item[(K1)] (Degenerate ellipticity)  
		There exist $\Lambda>0$ and $\sigma\in(0,2)$ such that, for all $x,z \in\R^N$,
		\[ 0 \leq K(x, z) \leq \frac{\Lambda}{|z|^{N+\sigma}}. \]
\item[(K2)] (Uniform Regularity)  
		There exist  $C_K>0$ and $\sigma\in(0,2)$ such that, for all $x,y, z \in\R^N$,
		\[| K(x, z) - K(y, z) | \leq C_K  \frac{|x-y|}{|z|^{N+\sigma}}. \]		
\end{itemize}	
In particular, if $K(x, z)= C(N,\sigma) k(x)  |z|^{-(N+\sigma)}$, 
with $k$ a Lipschitz continuous positive function, and $C(N,\sigma)$ the constant  appearing in (K2), then 
\[ \mathcal{I}_{x}u(x)= k(x)\left((-\Delta)^{\sigma/2}u \right)(x)\] 
is the $k-$weighted fractional Laplacian of order $\sigma\in(0,2)$. \smallskip

% --------------------------------------------------------------------------------
It is straightforward to check, in view of assumption (K1), that the family $(\nu_\xi)_\xi$ satisfies the uniform Lévy condition (M1), as well as the uniform decay at infinity condition (M2).
\begin{itemize}
\item[(M1)] The constant bounding uniformly the integrals is expressed in terms of $C(N,\sigma)$:
		\begin{eqnarray*}
		 \sup_{\xi \in \R^N}\int_{\R^N}\min(1,|z|^2) \nu_{\xi}(dz) 
		 %	& = & \sup_{\xi\in\R^N} \int_{\R^N} \min(1,|z|^2) K(\xi,z )dz \\
			&\le& \int_{\R^N}\min(1,|z|^2)\frac{\Lambda}{|z|^{N+\sigma}} dz
			\;=\;\Lambda{\rm vol}(\partial B)\left(\frac{1}{2-\sigma}+\frac{1}{\sigma} \right).
		 \end{eqnarray*}
\item[(M2)] Similarly, the uniform decay at infinity is related to that of the fractional Laplacian:
		\begin{eqnarray*}
		 \sup_{\xi \in \R^N} \int_{B_R^c}  \nu_{\xi}(dz) 
		 	 =   \sup_{\xi \in \R^N} \int_{B_R^c}  K(\xi,z )dz
			\le  \int_{B_R^c} \frac{\Lambda}{|z|^{N+\sigma}} dz
            \; = \; \frac{\Lambda}{\sigma}{\rm vol}(\partial B)R^{-\sigma} 
			= \Lambda o_{1/R}(1).
		 \end{eqnarray*}
                
\end{itemize}	
We show below that, in view of condition (K2),  assumptions (M3) and (M4) are satisfied.
\begin{itemize}
\item[(M3)] Let {$0<r<R$}. Then, for any $x,y\in\R^N$, 
		\begin{eqnarray*} 
			|\nu_x-\nu_y|\left(B_R\setminus B_r\right) 
			& = & \int_{B_R\setminus B_r}|K(x,z) - K(y,z)| dz
			\;\le\; C_K|x-y|\int_{B_R\setminus B_r}\frac{1}{|z|^{N+\sigma}} dz \\
            & = & \frac{C_K}{\sigma} {\rm vol}(\partial B)\big(r^{-\sigma}-R^{-\sigma}\big)|x-y|
             = \omega_{R,r}(|x-y|).
		\end{eqnarray*}
\item[(M4)] Let $0<r<1$. For any $x,y\in\R^N$, we estimate the Wasserstein distance 
between L\'evy measures $\nu_x$ and $\nu_y$.
Using  Proposition~\ref{prop.Wass-TV} and (K2), for all $x,y\in\R^N$,
\begin{eqnarray*} 
W_2^2(\nu_x,\nu_y)(B_r) &\le&	\int \limits_{B_r}|z|^2|\nu_x-\nu_y|(dz)
			 =  \int_{B_r} |z|^2 |K(x,z) - K(y,z)| dz\\
			&\leq &  C_K  |x-y| \int_{B_r} \frac{|z|^2}{|z|^{N+\sigma}} dz
             =  \frac{C_K}{2-\sigma}{\rm vol}(\partial B)  r^{2-\sigma} |x-y|
             =  o_r(1) |x-y|.
\end{eqnarray*}

\end{itemize}
		
% --------------------------------------------------------------------------------
\subsection{Nonlocal operators of variable order}\label{ex:var-order}
% --------------------------------------------------------------------------------

Let $(\nu_\xi)_{\xi \in\R^N}$ be a family of Lévy measures with density kernels $(K(\xi,\cdot))_{\xi}$ 
of variable order, given by
\[ K(\xi,z) = \frac{1}{|z|^{N+\sigma(\xi)}}, \] 
where $\sigma: \R^N \rightarrow (0, 2)$ is a function satisfying: 
\begin{itemize}
\item[(S1)] (Uniform ellipticity)  
		There exist $0 < \sigma_1 < \sigma_2 < 2$ such that, for all $x \in\R^N$,
$ \sigma_1 \leq \sigma(x) \leq \sigma_2.$
                %		\[ \sigma_1 \leq \sigma(x) \leq \sigma_2.\] 
\item[(S2)] (Lipschitz Regularity)  
  There exists $C_\sigma>0$  such that, for all $x,y \in\R^N$,
  $| \sigma(x) - \sigma(y)| \leq C_\sigma |x-y|.$
%		\[ | \sigma(x) - \sigma(y)| \leq C_\sigma |x-y|. \]		
\end{itemize}	

The nonlocal Lévy operator then writes 
\begin{equation*}
  \I_\xi u(x) = \int_{\R^N} \left[ u(x + z) - u(x) - \pmb{1}_B(z) Du(x) \cdot z \right]  \frac{1}{|z|^{N+\sigma(\xi)}} dz.
\end{equation*}

% --------------------------------------------------------------------------------
Note that the kernel satisfies, for all $z\in\R^N$,
\[  \min \left( \frac{1}{|z|^{N+\sigma_1}}, \frac{1}{|z|^{N+\sigma_2}}\right)
	\leq \frac{1}{|z|^{N+\sigma(\xi)}} \leq 
   \max \left( \frac{1}{|z|^{N+\sigma_1}}, \frac{1}{|z|^{N+\sigma_2}}\right).\]
In view of this estimate, we can easily check that assumptions (M1) and (M2) are satisfied.

\begin{itemize}
\item[(M1)] The constant bounding uniformly the integrals is expressed in terms of $\sigma_1$ and $\sigma_2$:
\begin{eqnarray*}
		 \sup_{\xi\in\R^N} \int_{\R^N} \min(1,|z|^2)\nu_{\xi}(dz) 
		 	&\le& \sup_{\xi \in \R^N} \int_{B}|z|^2 \frac{1}{|z|^{N+\sigma(\xi)}}dz  +
				  \sup_{\xi \in \R^N} \int_{B^c} \frac{1}{|z|^{N+\sigma(\xi)}}dz\\
		 	&\le& \int_{B} |z|^2\frac{1}{|z|^{N+\sigma_2}}dz  
                + \int_{B^c}\frac{1}{|z|^{N+\sigma_1}}dz
             = {\rm vol}(\partial B)\left(\frac{1}{2-\sigma_2}+\frac{1}{\sigma_1} \right).
\end{eqnarray*}

\item[(M2)] Similarly, the uniform decay at infinity can be expressed in terms of a function
		 associated with the fractional Laplacian of order $\sigma_1$. Indeed, for any $R>1$, we have
\begin{eqnarray*}
		 \sup_{\xi \in \R^N} \int_{B_R^c}  \nu_{\xi}(dz) 
		 	 =   \sup_{\xi \in \R^N} \int_{B_R^c}  \frac{1}{|z|^{N+\sigma(\xi)}}dz
			\le  \int_{B_R^c}  \frac{1}{|z|^{N+\sigma_1}} dz
            =  \frac{1}{\sigma_1}  {\rm vol}(\partial B) R^{-\sigma} 
			\;=\; o_{1/R}(1).
\end{eqnarray*}
		
\end{itemize}	

In order to show that (S2) implies (M3) and (M4), we rely on the following remark. Fix $x, y, z \in \R^N$ and without the loss of generality, we assume $\sigma(x) < \sigma(y)$. Then, by the mean value theorem applied to the real function $t\mapsto |z|^t$, it follows that there exists 
$\xi = \xi(x, y) \in (\sigma(x),\sigma(y))\subset (\sigma_1,\sigma_2)$ such that 
\begin{eqnarray}\label{eq:sigma_x} 
\left| |z|^{\sigma(x)} - |z|^{\sigma(y)} \right|
	& = & |z|^{\xi} \ln(|z|) \left|\sigma(y) - \sigma(x)\right| 
	\le C_\sigma |z|^{\xi}  |\ln(|z|)| |x-y|. 
\end{eqnarray}	
This is sufficient to get the estimates required by (M3) and (M4), as follows.

\begin{itemize}
\item[(M3)] Let {$0<r<1<R$}. Then, for any $x,y\in\R^N$, it holds that
		\begin{eqnarray*} 
			|\nu_x-\nu_y|\left(B_R\setminus B_r\right) 
			& = & \int_{B_R\setminus B_r}   \left| \frac{1}{|z|^{N+\sigma(x)}} - \frac{1}{|z|^{N+\sigma(y)}}\right| dz
			 =  \int_{B_R\setminus B_r}  \frac{\left|| z|^{\sigma(x)} -|z|^{\sigma(y)}\right|} {|z|^{N+\sigma(x) + \sigma(y)}} dz\\
			& \leq &  C_\sigma  |x - y|
				\int_{B_R\setminus B_r}  \frac{ |z|^{\xi}\big|\ln(|z|)\big|}{|z|^{N+\sigma(x)+\sigma(y)}} dz \\
			& \leq &  C_\sigma  |x - y| \left(
				\int_{B_R\setminus B}  \frac{\big|\ln(|z|) \big|}{|z|^{N+\sigma_1}} dz + 
				\int_{B\setminus B_r}  \frac{\big|\ln(|z|) \big|}{|z|^{N+\sigma_2}} dz \right).
		\end{eqnarray*}	
		We now evaluate the last integral terms in the sum and get
		\begin{eqnarray*} 
		\int_{B_R\setminus B} \frac{\big|\ln(|z|) \big|}{|z|^{N+\sigma_1}} dz
			 & = & {\rm vol}(\partial B)\int_1^R \frac{\ln(s)}{s^{1+\sigma_1}} ds
			   =   {\rm vol}(\partial B)\left(-\dfrac{R^{-\sigma_1}}{\sigma_1} 
			 	   \left(\ln(R)+\dfrac{1}{\sigma_1} \right)+\dfrac{1}{\sigma_1^2}\right),\\
		\int_{B\setminus B_r} \frac{\big|\ln(|z|) \big|}{|z|^{N+\sigma_2}} dz
			 & = & {\rm vol}(\partial B) \int_r^1  \frac{-\ln(s)} {s^{1 + \sigma_1}} ds 
			   =   {\rm vol}(\partial B)\left(-\dfrac{r^{-\sigma_2}}{\sigma_2} 
			 	   \left(\ln(r)+\dfrac{1}{\sigma_2}\right)+\dfrac{1}{\sigma_2^2}\right). 	
		\end{eqnarray*}	
		Summing up we obtain a Lipschitz bound for the total variation between the two Lévy measures, on the circular crown $B_R \setminus B_r$.
\item[(M4)] Let $0<r<1$. In view of Proposition \ref{prop.Wass-TV}
        and of inequality \eqref{eq:sigma_x}, for any $x,y\in\R^N$, it holds 
 \begin{eqnarray*} 
		W_2^2(\nu_x,\nu_y)(B_r) 
			& \le & \int \limits_{B_r}|z|^2|\nu_x-\nu_y|(dz)
			 =  \int_{B_r} \frac{|z|^2 \left|| z|^{\sigma(x)} -|z|^{\sigma(y)}\right|}
							    {|z|^{N+\sigma(x) + \sigma(y)}} dz\\
			&\leq &  C_\sigma|x-y|\int_{B_r}\frac{|z|^{2+\xi}\big|\ln(|z|)\big|}                                                 {|z|^{N+\sigma(x)+\sigma(y)}} dz
			\leq  C_\sigma|x-y|\int_{B_r}\frac{\big|\ln(|z|) \big|}
							                     {|z|^{N+\sigma_2- 2}} dz
                = : \omega(r)|x - y|, 
\end{eqnarray*}	
		where $\omega: \R_+ \to \R_+$ is given by  
		\begin{eqnarray*} 
		\omega(r) = C_\sigma\int_{B_r} \frac{\big|\ln(|z|) \big| }{|z|^{N+\sigma_2- 2}}dz
			 %& = & C_\sigma{\rm vol}(\partial B)\int_0^r\frac{-\ln(s)}{s^{\sigma_2-1}}ds\\
			 & = & C_\sigma {\rm vol}(\partial B)\dfrac{r^{2 - \sigma_2}}{2 - \sigma_2} 
			 		\left( \dfrac{1}{2 - \sigma_2} - \ln(r) \right). 
		\end{eqnarray*}			 
\end{itemize}

% --------------------------------------------------------------------------------
\subsection{Lévy-Itô operators}\label{ex:Levy-Ito}
% --------------------------------------------------------------------------------

We focus next on Lévy-Itô operators, which take the form
\begin{equation}\label{eq:Levy-Ito}
 	\mathcal J_{\xi} u(x) = \int_{\R^N} \left(u(x + {j(\xi,z)}) - u(x) - \pmb{1}_B(z) Du(x) \cdot {j(\xi,z)} \right) \nu(dz),
\end{equation}
where $\nu$ is a Lévy measure that is absolutely continuous with respect to the Lebesgue measure, and the jump function $j:\R^N\times\R^N\to\R^N $ satisfies  the following set of assumptions:
\begin{itemize}
\item[(J1)] For any $\xi\in\R^N$, the jump function at $\xi$, given by $j(\xi, \cdot):\R^N\to\R^N$, is invertible.
\item[(J2)] There exist  $0 <c_0 \leq c_1$ such that, for all $z \in\R^N$,
		\[ c_0 |z| \leq |j(\xi,z)|\leq c_1 |z|.\]
\item[(J3)] For any $r>0$, there exists a modulus of continuity 
		$\omega_{r}:\R_+\to\R_+$ such that for all $x,y\in\R^N$,
        and for all $z\in\R^N\setminus B_r$,
		\[|j(x,z) - j(y,z)| \leq \omega_{r}(|x - y|).\]
		%\[\int_{B\setminus B_r}|j(x,z) - j(y,z)|\nu(dz) \leq \omega_{\alert{r}}(|x - y|).\]
\item[(J4)] There exists $r_0>0$ such that, for all $r\in(0,r_0)$, and for all $x,y\in\R^N$,
		\[\int_{B_r} |j(x,z) - j(y,z)|^2 \nu(dz) \leq o_r(1) |x - y|.\] 
\end{itemize}		

These operators naturally appear as infinitesimal generators of non-symmetric stochastic jump processes, see~\cite{OS05}. Comparison principles for nonlocal Lévy operators have been first established in Lévy-Itô form, by Barles and Imbert in \cite{BI08} (see also \cite{JK06}). In their paper, no invertibility assumption is required, due to the fact that comparison principles are proved directly for nonlocal operators in Lévy-Itô form, and the proof takes into account their special structure. In our case, the passage from \eqref{eq:Levy-Ito} to the above operator requires, at this level, the invertibility of the jump function. We prove that under assumptions (J1)-(J4), Lévy-Itô operators can be reformulated as Lévy operators, for which the comparison result stated in Theorem \ref{thm.comparison} holds.

The results of Barles and Imbert in \cite{BI08} are more general than the ones we present here. 
In particular, there is no constraint with respect to the invertibibility of the jump function.
%\begin{itemize}
%\item[(J3-BI)] There exists a constant $\overline c>0$ such that for all $x,y\in\R^N$,
%		\[\int_{\R^N\setminus B} |j(x,z) - j(y,z)|\nu(dz) \leq \overline c |x - y|.\]
%\item[(J4-BI)] There exists a constant $\overline c>0$ such that, for all $x,y\in\R^N$,
%		\[\int_{R^N} |j(x,z) - j(y,z)|^2 \nu(dz) \leq \overline c |x - y|^2.\] 
%\end{itemize}		
%These assumptions hold provided the jump function is Lipschitz with respect to the space variable $\xi$.
The interest of our approach is that it gives a comparison result for nonlocal operators with a slightly less general assumption (J4) than the corresponding part in \cite{BI08}.

% --------------------------------------------------------------------------------
\subsubsection{Lévy-Itô operators through the lens of general Lévy operators} 
% --------------------------------------------------------------------------------
We start by making precise how Lévy-Itô operators can be written in the general Lévy form,  through an associated \textsl{push-forward measure}. To this end, consider, for any point $\xi\in\R^N$, the jump function at $\xi$, given by $j_\xi:= j(\xi,\cdot):\R^N\to\R^N$ and let the push forward measure of the Lévy measure $\nu$ through ${j_\xi}$ be given by
\[ \nu_\xi^{j} := ({j_\xi})_\# \nu. \]
The above measure can also be defined by duality, through integration against measurable functions, as follows.  For each measurable function $f: \R^N \to \R$ such that $f(z)\leq C\min (1, |z|^2)$, let
\begin{eqnarray*}
	\int_{\R^N} f(z) \nu_\xi^{j}(dz) & = &\int_{\R^N} f({j_\xi(z)}) \nu (dz).
\end{eqnarray*}
In view of (J2), the measure $\nu_\xi^{j}$ is well defined and it satisfies the  Lévy condition.\smallskip

In view of assumption $(J1)$, Lévy-Itô operators given by \eqref{eq:Levy-Ito} can be re-written, through a change of variables, as
\[ \mathcal {J}_\xi u(x) = \int_{\R^N} \big( u(x + z) - u(x) - \1_{j_\xi(B)}(z)  Du(x) \cdot z \big) \nu_\xi^j(dz). \]
The term $\1_{j_x(B)}(z) Du(x) \cdot z$ may fail, a priori, to compensate the first-order finite difference of $u$ at $x$ if the origin is not an interior point of ${j_\xi}(B)$. This is, however, ensured by the uniform linear control given by assumption (J2), which ensures that $B_{rc_0}\subset {j_\xi}(B_{r})$, for all $r>0$. Therefore, we can write the nonlocal Lévy-Itô operator as
\[  \mathcal J_\xi u(x) = \I_\xi u(x) + b^j(\xi) \cdot Du(x),\]
where $\I_\xi $ is a nonlocal operator taking the Lévy form
\[ \I_\xi u(x) = \int_{\R^N} \big(u(x + z) - u(x) - \1_{B}(z) Du(x)\cdot z\big) \nu_\xi^j(dz),\]
and $b^j(\xi)$ is the vector field given  by
\begin{equation*}
b^j(\xi):=  
    \int_{\R^N}\left(\1_{B}(z)-\1_{{j_\xi}(B)}(z)\right)\;z\;\nu_\xi^j(dz) 
 =  \int_{\R^N}\left(\1_{j_\xi^{-1}(B)}(z)-\1_{B}(z)\right)\;j_\xi(z)\;\nu(dz).
\end{equation*}
We show below the vector field is well defined and bounded, and the family of measures $(\nu_\xi^j)_\xi$ satisfies assumptions (M1)-(M4).

\begin{remark}[\em Symmetric Lévy-Itô operators]\em
 In the case of symmetric Lévy measure, i.e.,  when $\nu(-A) = \nu(A)$, for all measurable sets $A\subset\R^N$,
assumption (J1) is not necessary anymore, as we assume instead the following:  
\begin{itemize}
    \item[(J1)'] For any $\xi\in\R^N$, the jump function ${j_\xi}:\R^n\to\R^N$ is symmetric, i.e., ${j_\xi}(-z) ={j_\xi(z)}$ for all $z\in\R^N$.
\end{itemize}
Then, the Lévy-Itô operators can be written as 
\[  \mathcal J_{\xi} u(x) = \mathrm{P.V.} \int_{\R^N} \big( u(x + j(x,z)) - u(x) \big) \nu(dz)
   = \mathrm{P.V.} \int_{\R^N} \big( u(x + z) - u(x)\big) \nu_\xi^j(dz). \]
It is easy to see that the operator is well-defined, and we shall check below that assumptions (M1)-(M4) are satisfied.
\end{remark}

% --------------------------------------------------------------------------------
\noindent \subsubsection{Boundedness and regularity of the vector field $b^j(\cdot)$}
% -------------------------------------------------------------------------------- 
The extra drift term will be considered as part of the Hamiltonian, posing 
$$\widetilde H(x,p) = H(x,p) + b^j(x) \cdot p.$$
Thus, running the comparison proof with $\widetilde H$ instead of $H$, it requires for $\widetilde H$ to satisfy the assumptions (H0)-(H2). The map $p \mapsto b^j(x) \cdot p$ is linear, and therefore it will not affect the boundedness assumption (H0). However, the superlinear coercivity condition (H2) relies on the boundedness of $b^j_\xi$, whereas the regularity assumption (H1) requires a modulus of continuity for $b^j_\xi$. Note first that, in view of the invertibility assumption (J1) and of the uniform bounds (J2), we have, for any $r>0$, and for all $\xi \in\R^N$,
\begin{equation}\label{eq:ball-inclusion-LI}
     B_{rc_0} \subseteq j_\xi(B_r)      \subseteq B_{rc_1}\quad \text{and}\quad
     B_{r/c_1}\subseteq j_\xi^{-1}(B_r) \subseteq B_{r/c_0}.
\end{equation}

On one hand, in view of assumptions (J1) and (J2), it follows that the vector field is well-defined and bounded:
\begin{eqnarray*}
| b^j(\xi) | 
    &\le & \int_{\R^N} \left|\1_{j_\xi^{-1}B}(z)-\1_{B}(z)\right|\;|j_\xi(z)|\ \; \nu(dz) 
    \;\le\;c_1 \int_{B\Delta j_\xi^{-1}(B)}| z| \nu(dz) \\
    &\le & c_1 \max(1,\frac{1}{c_0}) \int_{B\Delta j_\xi^{-1}(B)}\nu(dz)
    \;\le\;c_1 \max(1, \frac{1}{c_0})\int_{B_{\max\{c_0^{-1},1\}}\setminus B_{\min\{c_1^{-1}, 1\}}}\nu(dz) = : C_{c_0,c_1,\nu},
\end{eqnarray*}
with $C_{c_0,c_1,\nu}$ a constant depending only on $c_0,c_1$ and $C_\nu$.

On the other hand, we obtain a modulus of continuity for the vector field. Taking into account the ball inclusion \eqref{eq:ball-inclusion-LI}, there exists $r_0\leq \min(1, c_0)$ and $r_1=\min(1, r_0 / c_1)$ such that, uniformly in $\xi$,
\[ B\setminus j_\xi(B) = (B\setminus B_{r_0})\ \setminus j_\xi(B) 
= (B\setminus B_{r_0})\setminus j_\xi(B\setminus B_{r_0}).\]
Hence, the following estimate holds
\begin{eqnarray*}
|b^j(x) - b^j(y)| 
	& = &  \left | \int_{B\setminus j_x(B)} z\nu_x^j(dz)  - 
		   \int_{B\setminus j_y(B)} z\nu_y^j(dz)   \right| \\
	& = &  \left | \int_{B\setminus B_{r_0}} z \left(\nu_x^j- \nu_y^j\right)(dz) + 
		   \int_{B\setminus B_{r_1}} \left( j(x,z) - j(y,z)\right)\nu(dz)  \right| \\
	& \leq &	\int_{B\setminus B_{r_0}} |z| |\nu_x^j - \nu_y^j | (dz)+ 
		 \int_{B\setminus B_{r_1}} \left| j(x,z) - j(y,z) \right| \nu(dz)\\
   	& \leq & r_0  |\nu_x^j - \nu_y^j | (B\setminus B_{r_0})+  \int_{B\setminus B_{r_1}} \left| j(x,z) - j(y,z) \right| \nu(dz).
\end{eqnarray*}
Similarly to the computations performed below for $(M3)$,  it follows that
\begin{eqnarray*}
|b^j(x) - b^j(y)|  & \le & \widetilde \omega_{1,r_0}(|x-y|) +  \omega_{1,r_1}(|x-y|).
\end{eqnarray*}

% -------------------------------------------------------------------------------- 
\subsubsection{Assumptions (M1)-(M4) for the family of push-foward Lévy measures}
% -------------------------------------------------------------------------------- 
We show below that the family of Lévy measures $(\nu^j_\xi)_{\xi\in\R^N}$ is well-defined, and that assumptions (M1)-(M4) are satisfied, provided (J1)-(J4) hold.

\begin{itemize}
\item[(M1)]  It follows immediately from (J2) and the definition of the push forward measure that
 \begin{eqnarray*}
	\sup_{\xi \in \R^N} \int_{\R^N} \min(1,|z|^2) \nu_{\xi}^j(dz) 
		&=  &\sup_{\xi \in \R^N} \int_{\R^N} \min ( 1, |{j_\xi(z)}|^2 ) \nu(dz) \\
		& \leq &c_1^2 \int_{B_{1/c_1}}|z|^2\nu(dz) +  \int_{B_{1/c_1}^c}\nu(dz)<\infty.
\end{eqnarray*} 

\item[(M2)] Let $R>1$. It follows from the definition of the push forward measure, assumption (J2) and the definition of the Lévy measure, that
		\begin{eqnarray*}
		\sup_{\xi \in \R^N} \int_{B_R^c} \nu_{\xi}^j(dz) 
			& = & \sup_{\xi \in \R^N} \int_{({j_\xi})^{-1}(B_R^c)} \nu(dz)
			\le  \int_{B_{R/c_1}^c} \nu(dz)  = o_{1/R}(1).
		\end{eqnarray*} 		
\item[(M3)] Let $0<r<R$. Notice that, for all $x,y\in\R^N$, 
		$| \nu_x^j - \nu_y^j| = \left(\nu_x^j - \nu_y^j\right)^+ + 
				   \left(\nu_x^j - \nu_y^j\right)^-$
        and
\begin{eqnarray*}
		\int_{B_R\setminus B_r}  \left(\nu_x^j - \nu_y^j\right)^+(dz)  
			& = & \sup_{A\subset B_R\setminus B_r } 
                \left( \int_A \nu_x^j (dz) - \int_A\nu_y^j(dz)  \right)\\
			 & = & \sup_{A\subset B_R\setminus B_r } 
                \left( \int_{j_x^{-1}(A)} \nu(dz) - 
					   \int_{j_y^{-1}(A)} \nu(dz) \right)\\
			&\le&  \sup_{A\subset B_R\setminus B_r }
                \nu\left(j_x^{-1}(A) \setminus j_y^{-1}(A)\right),
\end{eqnarray*}
and we have a similar estimate for $\left(\nu_x^j - \nu_y^j\right)^-$.

In view of (J2), we have 
        $\displaystyle  j_x^{-1}(A) \setminus j_y^{-1}(A) \subseteq 
            j_{x}^{-1}(A) \subseteq j_{x}^{-1}(B_R\setminus B_r) \subseteq B_{R/c_0}\setminus B_{r/c_1}.$
        It follows, in the first place, that the total variation is finite:
		\begin{eqnarray*}
		| \nu_x^j - \nu_y^j| \left(B_R\setminus B_r\right) 
			& \leq & 2 \int_{B_{R/c_0} \setminus B_{r/c_1}} \nu(dz) <\infty.
		\end{eqnarray*} 	
        In view of assumption (J3), $j_y(z) = j_x(z) + \omega_{r/c_1}(|x-y|) e$,
        with $e\in\R^N$ a unit vector, depending on $z,x,y$ and $j$. In the worst case scenario, the set shrinks by a factor of $\omega_{r/c_1}(|x-y|)$, and we have a rough estimate 
        \begin{eqnarray*}
             | j_x^{-1}(A) \setminus j_y^{-1}(A)| \leq
            |A|\, \tilde \omega_r(|x-y|),
        \end{eqnarray*}    
        with $\widetilde \omega_r(s) = \left(\omega_{r/c_1}(s)\right)^N$. Finally, if the measure $\nu$ is absolutely continuous with respect to the Lebesgue measure, this amounts to
        \begin{eqnarray}\label{est942}
		| \nu_x^j - \nu_y^j| \left(B_R\setminus B_r\right) 
			& \leq & \widehat\omega_{R,r}(|x-y|).
		\end{eqnarray}
\item[(M4)] Let  $r_0 = \min(1, 1/c_1)$ and $r\in(0,r_0)$. Then, for any $x,y\in\R^N$, the Wasserstein distance 
		between the L\'evy measures $\nu_x^j$ and $\nu_y^j$ restricted on $B_r$ is given by
		\begin{eqnarray*}
		W_2^2(\nu_x^j,\nu_y^j)(B_r) & = &			
		 \inf_{\gamma} \int \limits_{B_r\times B_r} |z_1 - z_2|^2 d\gamma(z_1,z_2).
		\end{eqnarray*} 	
		Let  $\gamma = (j_x \times j_y)_\# \nu\mid_{B_r}$.
                We claim that $\gamma$ is an admissible coupling plan for $\nu^j_x\mid_{B_r}$ and $\nu^j_y\mid_{B_r}$. 
                Indeed, in view of Lemma \ref{lem.supp-gamma}, $\text{supp}(\gamma) \subset B_r\times B_r$, 
                and hence, for all $A\subset B_r\setminus\{0\}$ measurable, we have
		\begin{eqnarray*}
		\pi^1_\# \gamma (A) 
		& = & \gamma((\pi^1)^{-1}(A)) = \gamma(A\times B)
		  =  (j_x \times j_y)_\# \nu\mid_{B_r}(A\times B)\\
		& = & \nu\mid_{B_r} \Big(\big(j_x \times j_y\big)^{-1}(A\times B)\Big)
		  =   \nu\mid_{B_r} \Big(\big(j_x\big)^{-1}(A)\cap\big(j_y\big)^{-1}(B)\Big) \\
		& = & \nu \Big(\big(j_x\big)^{-1}(A)\Big)
		  =  (j_x)_\# \nu(A) = \nu_x^j\mid_{B_r}(A),
		\end{eqnarray*} 	
		using assumption (J2), which ensures that $B_r\subseteq B_{1/c_1} \subseteq \big(j_y\big)^{-1}(B)$.
 		Similarly, $\pi^2_\# \gamma (A)=\nu_y^j\mid_{B_r}(A)$ and the claim is proved.
                
		Now, for each $r \in (0,r_0)$ we have 
		\begin{eqnarray*}
			\int \limits_{B_r\times B_r} |z_1 - z_2|^2 d\gamma(z_1,z_2) 
				&= & \int \limits_{B_r\times B_r} |z_1 - z_2|^2 d(j_x \times j_y)_\# \nu(z_1, z_2) \\
				&= & \int_{j_x^{-1}(B_r) \cap j_y^{-1}(B_r)} |j(x,z) - j(y,z)|^2 \nu(dz) \\
				&\le & \int_{B_{r/c_0}} |j(x,z) - j(y,z)|^2 \nu(dz).
		\end{eqnarray*}
	The conclusion then follows from assumption $(J4)$.
	\end{itemize}
\appendix 
% =================================================================================
\section{Nonlocal estimate for a localization function}\label{sec:appendix-NL}
% =================================================================================

We present some fine estimates of nonlocal operators acting on smooth localization functions, extending some results previously obtained in ~\cite{BLT17}. These are used for PIDEs posed on unbounded domains, where we employ a localization function, which can take the following form.
Let $c>0$ be a fixed constant. Consider $\psi \in C^2(\R^N)\cap L^\infty(\R^N)$ such that
	\begin{equation}\label{psi_func}
		\begin{dcases}
			\psi = 0 & \mbox{in } B, \\
			\psi = c & \mbox{in } \R^N \setminus B_2, \\
			0 \leq \psi \leq c & \mbox{in } B_2 \setminus B.
		\end{dcases}
	\end{equation}
Notice that there exists a constant $C_0>0$ such that 
\begin{equation}\label{eq:C0}
  ||  \psi || _{\infty}, ||  D\psi || _{\infty}, || D^2 \psi ||_{\infty} \leq C_0.
\end{equation}
The following lemma is an extension of~\cite[Lemma 2.3]{BLT17}  to the case of a family of point-dependent L\'evy measures $(\nu_\xi)_{\xi\in\R^N}$, satisfying the uniform integrability assumption  (M1).

% ---------------------------------------------------------------------------------
\begin{lemma} \label{lem.localization}
Let $(\nu_\xi)_{\xi\in\R^N}$ be a family of L\'evy measures satisfying assumption (M1).
Let $\psi$ be defined as in \eqref{psi_func}. Then, for  any $\beta \in (0, 1)$,
the function $\psi_{\beta}(x) := \psi(\beta x)$  satisfies
\begin{eqnarray}
       \nonumber && 
       	||  D\psi_{\beta} || {\infty} \leq C_0 \beta, \, || D^2 \psi_{\beta} ||_{\infty} \leq C_0 \beta^2, \\
       \nonumber && 
       	\sup_{\xi\in \R^N}\I_\xi[B_\delta](x, \psi_{\beta}) \leq  
		\min \left(\frac12 C_0 C_\nu \beta^2,  \beta^2 o_\delta^\xi(1) \right),\\
      \label{eq:unif-est-beta}  && 
      	\sup_{\xi\in \R^N} \I_\xi[B_\delta^c](x, \psi_{\beta}) \leq o_{\beta}^\xi(1),
\end{eqnarray}
where $C_0$ appears in~\eqref{eq:C0} and $C_\nu$ in (M1).
If, in addition (M2) holds, then estimate \eqref{eq:unif-est-beta} holds uniformly  with respect to $\xi\in\R^N$.
\end{lemma}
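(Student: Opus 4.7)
The proof proceeds by decomposing the nonlocal operator into the singular interior region, a moderate annulus, and the far-field region, matching each scale to the correct analytic estimate.

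\textbf{Step 1: Derivative bounds.} These follow immediately from the chain rule applied to $\psi_\beta(x) = \psi(\beta x)$. One has $D\psi_\beta(x) = \beta\, D\psi(\beta x)$ and $D^2\psi_\beta(x) = \beta^2 D^2\psi(\beta x)$, so the pointwise bounds~\eqref{eq:C0} on $\psi$ yield $\|D\psi_\beta\|_\infty \leq C_0\beta$ and $\|D^2\psi_\beta\|_\infty \leq C_0\beta^2$.

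\textbf{Step 2: Interior estimate on $B_\delta$.} I will apply the second-order Taylor formula
\[
\psi_\beta(x+z) - \psi_\beta(x) - D\psi_\beta(x)\cdot z = \int_0^1 (1-\theta)\, D^2\psi_\beta(x+\theta z)z\cdot z\, d\theta,
\]
giving the pointwise bound $|\psi_\beta(x+z)-\psi_\beta(x)-D\psi_\beta(x)\cdot z|\leq \tfrac12 C_0\beta^2|z|^2$. Integrating against $\nu_\xi$ and invoking (M1) produces the first bound $\tfrac12 C_0 C_\nu \beta^2$. The companion bound $\beta^2 o_\delta^\xi(1)$ follows from the fact that, for each fixed $\xi$, the finiteness of $\int_{B}|z|^2\nu_\xi(dz)$ (by (M1)) forces $\int_{B_\delta}|z|^2\nu_\xi(dz)\to 0$ as $\delta\to 0$ by absolute continuity of the integral.

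\textbf{Step 3: Exterior estimate on $B_\delta^c$.} I split into $(B\setminus B_\delta) \cup (B_R\setminus B) \cup B_R^c$ for a parameter $R>1$ to be chosen in terms of $\beta$. On $B\setminus B_\delta$, the Taylor bound from Step 2 together with (M1) produces a contribution of order $C_0 C_\nu \beta^2$. On the middle annulus $B_R\setminus B$ the correction $\mathbf 1_B(z) D\psi_\beta(x)\cdot z$ is absent, and I use the first-order estimate $|\psi_\beta(x+z)-\psi_\beta(x)|\leq C_0\beta|z|$, yielding
\[
\int_{B_R\setminus B}|\psi_\beta(x+z)-\psi_\beta(x)|\,\nu_\xi(dz) \leq C_0\beta R\int_{B\setminus \{0\}}\min(1,|z|^2)\nu_\xi(dz)\,/\min(1,1) \leq C_0 C_\nu \beta R.
\]
On the far-field region $B_R^c$, the uniform bound $|\psi_\beta(x+z)-\psi_\beta(x)|\leq 2C_0$ gives
\[
\int_{B_R^c} |\psi_\beta(x+z)-\psi_\beta(x)|\,\nu_\xi(dz) \leq 2C_0 \int_{B_R^c}\nu_\xi(dz).
\]
Under (M1) alone, the tail $\int_{B_R^c}\nu_\xi(dz)$ is finite for each fixed $\xi$ and tends to $0$ as $R\to\infty$ by dominated convergence; under (M2), this decay is uniform in $\xi$ and controlled by $o_{1/R}(1)$. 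Choosing $R=R(\beta)\to\infty$ with $\beta R(\beta)\to 0$ (e.g. $R(\beta)=\beta^{-1/2}$) makes both the middle-annulus contribution and the tail vanish, establishing~\eqref{eq:unif-est-beta} pointwise in $\xi$ under (M1), and uniformly in $\xi$ under (M2).

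The only delicate point is the balancing of scales in Step 3: we need $R$ large enough to make the tail negligible and simultaneously small enough that $\beta R\to 0$ controls the first-order variation on the intermediate annulus. Assumption (M2) is precisely what upgrades the pointwise tail decay (from integrability in (M1)) to uniform decay in $\xi$, and hence the final estimate from $o_\beta^\xi(1)$ to a uniform $o_\beta(1)$.
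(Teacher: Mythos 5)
Your proof is correct and follows essentially the same decomposition as the paper: Taylor estimate of order $\beta^2|z|^2$ near the origin with (M1), a first-order Lipschitz estimate of order $\beta R$ on the intermediate annulus $B_R\setminus B$, and the sup-norm bound on the far tail. The one genuine (and welcome) difference is your handling of the $R$-optimization: you pick a concrete $R(\beta)=\beta^{-1/2}$, which makes both $\beta R(\beta)\to 0$ and $R(\beta)\to\infty$ automatic, whereas the paper takes $\inf_{R\geq 1}$ and invokes an auxiliary calculus lemma (Lemma~\ref{lem.modul-cont}) to show the infimum vanishes; your choice is a slight simplification and is perfectly valid, since all that matters is that some $R(\beta)\to\infty$ with $\beta R(\beta)\to 0$ exists. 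One cosmetic remark: the intermediate inequality on $B_R\setminus B$ in your Step 3 is written misleadingly (the integration domain appears as $B\setminus\{0\}$ and there is a spurious ``$/\min(1,1)$''); what is intended, and what is correct, is simply that $\min(1,|z|^2)\equiv 1$ on $B_R\setminus B$, so $\int_{B_R\setminus B}\nu_\xi(dz)\leq C_\nu$ by (M1).
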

% ---------------------------------------------------------------------------------

\begin{proof}The estimates for $D\psi_\beta$ and $D^2 \psi_\beta$ are obvious.
In order to estimate the nonlocal term on $B_\delta$, we take into account the $C^2$ regularity of  $\psi_\beta$, and write
\begin{eqnarray*}
    \I_\xi[B_\delta](x, \psi_{\beta}) = 
    	\frac{1}{2} \int_{|z|\leq \delta}\int_0^1 D^2\psi_\beta(x+\theta z)z\cdot z d \theta\, \nu_\xi (dz).
\end{eqnarray*}
A direct estimate gives, in view of assumption (M1),
\begin{eqnarray*}
|\I_\xi[B_\delta](x, \psi_{\beta})| 
&\le &\frac12| D^2\psi_\beta||_{\infty} \int_{|z|\leq \delta} |z|^2 \nu_\xi (dz)
\;\le\;\frac12 C_0 \beta^2 \int_{|z|\leq \delta} |z|^2 \nu_\xi (dz)
	 \leq  \frac{1}{2} C_0 C_\nu \beta^2.
\end{eqnarray*}
Moreover, for a fixed $\xi\in\R^N$, the regularity of the measure $\nu_\xi$ gives
$ \displaystyle \lim_{\delta \rightarrow 0} \int_{|z|\leq \delta} |z|^2 \nu_\xi (dz) = 0, $
which, in turn, implies the desired bound
\begin{eqnarray*}
|\I_\xi[B_\delta](x, \psi_{\beta})|\leq \frac{1}{2} \beta^2 o_\delta^\xi (1).
\end{eqnarray*}

In order to estimate the nonlocal term on $B_\delta^c$, we write
\begin{align*}
    & \I_\xi[B_\delta^c](x, \psi_{\beta}) =  
    ~ \I_\xi[B](x, \psi_{\beta}) - \I_\xi[B_\delta](x, \psi_{\beta}) + \I_\xi[B^c](x,  \psi_{\beta}).
\end{align*}
From the previous step, it follows that
\begin{eqnarray*}
\left| \I_\xi[B](x, \psi_{\beta}) \right| + \left| \I_\xi[B_\delta](x, \psi_{\beta}) \right|\leq C \beta^2.
\end{eqnarray*}
To conclude, it remains to prove that $\I_\xi[B^c](x, D\psi_{\beta}(x),  \psi_{\beta})\to 0$
as $\beta\to 0$, uniformly with respect to $x,\xi$. For any $R\geq 1$, the nonlocal term writes
\begin{eqnarray*}
&& \I_\xi[B^c](x,  \psi_{\beta}) =  \I_\xi[B_R\setminus B](x,  \psi_{\beta}) +  \I_\xi[B_R^c](x,  \psi_{\beta}).
\end{eqnarray*}
In order to estimate the first term, we rely on the regularity of $\psi_\beta$ and (M1), whereas for the second one, on the regularity of the measure. Thus, it follows that
\begin{eqnarray*}
\I_\xi[B_R\setminus B](x,  \psi_{\beta}) 
& = & \int_{1\leq |z|\leq R}  [\psi_{\beta}(x + z) -  \psi_{\beta}(x) ] \nu_{\xi}(dz)
\;=\; \int_{1\leq |z|\leq R}  \int_0^1 D\psi_{\beta}(x + \theta z) \cdot z d\theta \nu_{\xi}(dz) \\
&\le&  |  D\psi_{\beta} | _\infty  \int_{1\leq |z|\leq R}  |  z|    \nu_{\xi}(dz)
\; \le \; \beta C_0 \int_{1\leq |z|\leq R}  |  z|  \nu_{\xi}(dz)
\; \le  \;\beta C_0 C_\nu R,  \\
\I_\xi[B_R^c](x,  \psi_{\beta}) 
& = &   \int_{|z|> R} [\psi_{\beta}(x + z) -  \psi_{\beta}(x) ] \nu_{\xi}(dz)
\;\le \;   2 |  \psi_\beta | _\infty \int_{|z|> R}  \nu_{\xi}(dz)
\;\le \;   2C_0  o_{1/R}^\xi(1).
\end{eqnarray*}
Therefore, there exists a constant $C>0$ such that, for all $R\ge 1$, it holds
\[  \I_\xi[B^c](x,  \psi_{\beta})\leq C_0\left(C_\nu \beta R +  o_{1/R}^\xi (1) \right). \]
In view of Lemma~\ref{lem.modul-cont} below, taking infimum over all $R\ge 1$, \eqref{eq:unif-est-beta} follows.

When assumption (M2) is in place, one can replace $o_{1/R}^\xi(1)$  in the  inequality  \eqref{eq:unif-est-beta} with $o_{1/R} (1)$, which tends to $0$ as $R\to +\infty$ uniformly with respect to $\xi$, leading to 
\[\sup_{\xi\in\R^N} \I_\xi[B^c](x,  \psi_{\beta}) \leq o_\beta(1).\]
\end{proof}

% ---------------------------------------------------------------------------------
\begin{lemma} \label{lem.modul-cont}
Let $g: [1,\infty)\to [0,\infty)$ such that $g (R)\to 0$ as $R\to +\infty$. Define
$ \displaystyle f(\beta):=\inf_{R\ge 1}\left(g(R)+R\beta\right). $
Then $f(\beta)\to 0$ as $\beta\to 0$.  
\end{lemma}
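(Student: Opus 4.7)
The plan is straightforward, as this is a standard $\varepsilon$-argument exploiting the fact that the infimum is a double minimization: we can choose $R$ first (large, to control $g$) and then force $\beta$ small (to control $R\beta$ with the chosen $R$ fixed).

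More precisely, fix any $\varepsilon > 0$. Since $g(R) \to 0$ as $R \to +\infty$, I would first select $R_\varepsilon \geq 1$ such that $g(R_\varepsilon) < \varepsilon/2$. With this $R_\varepsilon$ now fixed, I would then choose $\beta_\varepsilon := \varepsilon/(2 R_\varepsilon) > 0$, so that for any $\beta \in (0, \beta_\varepsilon)$ we have $R_\varepsilon \beta < \varepsilon/2$. Combining,
\[
0 \leq f(\beta) = \inf_{R \geq 1}\bigl(g(R) + R\beta\bigr) \leq g(R_\varepsilon) + R_\varepsilon \beta < \frac{\varepsilon}{2} + \frac{\varepsilon}{2} = \varepsilon.
\]
This shows $f(\beta) \to 0$ as $\beta \searrow 0$, which is exactly the claim.

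There is essentially no obstacle here: the key point is that $R_\varepsilon$ is selected \emph{before} $\beta$, so it is a fixed (possibly large) constant when choosing how small $\beta$ must be. The nonnegativity of $g$ and of $R\beta$ ensures the lower bound $f(\beta) \geq 0$, so no absolute values are needed. The only subtle point worth noting is that $f$ itself may not be continuous, but the argument only establishes the limit at $0$, which is all that is required for the application to Lemma~\ref{lem.localization}.
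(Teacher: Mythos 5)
Your proof is correct, and it is genuinely different from — and in fact simpler than — the paper's argument. The paper first replaces $g$ (without loss of generality) by a smooth, decreasing, convex majorant, then identifies the minimizer $R_\beta$ via the first-order condition $g'(R_\beta) + \beta = 0$, shows $R_\beta \to +\infty$, and uses the convexity inequality $g'(R_\beta)(R - R_\beta) \le g(R) - g(R_\beta)$ evaluated at $R = R_\beta/2$ to deduce $\beta R_\beta \to 0$. Your argument bypasses all of that: you simply pick a single large $R_\varepsilon$ to make $g(R_\varepsilon) < \varepsilon/2$ and then shrink $\beta$, which yields $f(\beta) < \varepsilon$ directly from the definition of infimum. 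This is cleaner and more elementary, and it avoids the slightly delicate WLOG reduction in the paper (one would need to justify that such a smooth, decreasing, convex majorant still tending to $0$ exists, and that passing to a majorant preserves the conclusion — both true, but requiring a word of justification). The paper's approach would be advantageous only if one wanted quantitative information about the optimal $R_\beta$ or the decay rate of $f(\beta)$, which is not needed for the application.
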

% ---------------------------------------------------------------------------------

\begin{proof}[Sketch of proof of Lemma~\ref{lem.modul-cont}]
Without loss of generality, we may assume that $g$ is a smooth, decreasing, convex function (otherwise one can replace $g$ with a larger function, which still tends to $0$ at $+\infty$).  In this case, the infimum in $R$ is achieved at some $R_\beta$ satisfying $g'(R_\beta)+\beta =0.$ Therefore
$R_\beta = (g')^{-1}(-\beta) \mathop{\to}_{\beta\to 0} +\infty. $
In view of the convexity of $g$, we have, for all $R\geq 1$,
\begin{eqnarray*}
g'(R_\beta) (R-R_\beta)\leq g (R)-g (R_\beta).
\end{eqnarray*}
Writing the previous inequality for $R=R_\beta/2$ and using the expression for $R_\beta$,  we obtain
\begin{eqnarray*}
\beta R_\beta \leq 2\left(  g (R_\beta/2)-g (R_\beta) \right)
 \mathop{\to}_{\beta\to 0} 0.  
\end{eqnarray*}
We conclude that $f(\beta)= g (R_\beta) +\beta R_\beta \to 0$
as $\beta \to 0$.
\end{proof}

% --------------------------------------------------------------------------------------------------
\bigskip\paragraph*{\textbf{Acknowledgments}}
% --------------------------------------------------------------------------------------------------

 A. Ciomaga is partially supported by the ANR (Agence Nationale de la Recherche) through the COSS project ANR-22-CE40-0010 and by the Research Grant GAR2023 - code 73 - from the Donors' Recurrent Fund, at the disposal of the Romanian Academy and managed through the "PATRIMONIU" Foundation.
T. M. L\^e is supported by the Austrian Science Fund grant FWF P-36344N. 
O. Ley is partially supported by the ANR (Agence Nationale de la Recherche) through the COSS project ANR-22-CE40-0010 and the Centre Henri Lebesgue ANR-11-LABX-0020-01.
E. Topp is partially supported by CNPq Grant 306022/2023-0, CNPq Grant 408169/2023-0, and FAPERJ APQ1 Grant 210.573/2024.

% =================================================================================

% =================================================================================
%\bibliographystyle{plain}
% =================================================================================

%\bibliography{PIDE-biblio} 

\begin{thebibliography}{10}
% =================================================================================

\bibitem{AT96}
O.~Alvarez and A.~Tourin.
\newblock Viscosity solutions of nonlinear integro-differential equations.
\newblock {\em Ann. Inst. Henri Poincar{\'e}, Anal. Non Lin{\'e}aire}, 13(3):293--317, 1996.
  
\bibitem{A07}  
N. Alibaud.
\newblock Existence, uniqueness and regularity for nonlinear parabolic equations with nonlocal terms.
\newblock{\em NoDEA, Nonlinear Differ. Equ. Appl.}, 14, No. 3-4, 259-289, 2007.
 
\bibitem{A03}
A.L. Amadori,
\newblock {Nonlinear integro-differential evolution problems arising in option
  pricing: a viscosity solutions approach,}
\newblock {\em Differential Integral Equations}, 16, 787-811, 2003.

\bibitem{Ari06}
M. Arisawa,
\newblock {A new definition of viscosity solutions for a class of second-order
  degenerate elliptic integro-differential equations,}
\newblock {\em Ann. Inst. H. Poincar\'e Anal. Non Lin\'eaire}, 23, 695-711, 2006.

\bibitem{App09}
D.~Applebaum.
\newblock {\em L{\'e}vy processes and stochastic calculus.}, volume 116 of {\em
  Camb. Stud. Adv. Math.}
\newblock Cambridge: Cambridge University Press, 2nd ed. edition, 2009.

\bibitem{BCT19}
M.~Bardi, A.~Cesaroni, and E.~Topp.
\newblock Cauchy problem and periodic homogenization for nonlocal
  {H}amilton-{J}acobi equations with coercive gradient terms.
\newblock {\em To appear in Proc. R. Soc. Edinb. A.}, 2019.

\bibitem{Barles94}
G.~Barles.
\newblock {\em Solutions de viscosit{\'e} des {\'e}quations de
  {Hamilton}-{Jacobi}}, volume~17 of {\em Math. Appl. (Berl.)}.
\newblock Paris: Springer-Verlag, 1994.

\bibitem{BCCI12}
G.~Barles, E.~Chasseigne, A.~Ciomaga, and C.~Imbert.
\newblock Lipschitz regularity of solutions for mixed integro-differential
  equations.
\newblock {\em J. Differential Equations}, 252(11):6012--6060, 2012.

\bibitem{BCI11}
G.~Barles, E.~Chasseigne, and C.~Imbert.
\newblock H\"older continuity of solutions of second-order non-linear elliptic
  integro-differential equations.
\newblock {\em J. Eur. Math. Soc. (JEMS)}, 13(1):1--26, 2011.

\bibitem{BI08}
G.~Barles and C.~Imbert.
\newblock Second-order elliptic integro-differential equations: viscosity
  solutions' theory revisited.
\newblock {\em Ann. Inst. H. Poincar\'e Anal. Non Lin\'eaire}, 25(3):567--585, 2008.

\bibitem{BKLT15}
G.~Barles, S.~Koike, O.~Ley, and E.~Topp.
\newblock Regularity results and large time behavior for integro-differential
  equations with coercive {H}amiltonians.
\newblock {\em Calc. Var. Partial Differential Equations}, 54(1):539--572, 2015.

\bibitem{BLT17}
G.~Barles, O.~Ley, and E.~Topp.
\newblock Lipschitz regularity for integro-differential equations with coercive
  {Hamiltonians} and application to large time behavior.
\newblock {\em Nonlinearity}, 30(2):703--734, 2017.

\bibitem{BT16}
G.~Barles and E.~Topp.
\newblock Existence, uniqueness, and asymptotic behavior for nonlocal parabolic problems with dominating gradient terms,
\newblock {\em SIAM J. Math. Anal.} {48}, 1512–-1547, 2016.
  
\bibitem{BT16b}
G.~Barles and E.~Topp.
\newblock Lipschitz regularity for censored subdiffusive integro-differential
  equations with superfractional gradient terms.
\newblock {\em Nonlinear Anal., Theory Methods Appl., Ser. A, Theory Methods},
  131:3--31, 2016.

\bibitem{BF92}
A. Bensoussan, and J. Frehse.
\newblock On Bellman equations of ergodic control in $\R^n$. 
\newblock J. Reine Angew. Math. 429, 125–160, 1992.

\bibitem{CS09}
L.~Caffarelli and L.~Silvestre.
\newblock Regularity theory for fully nonlinear integro-differential equations.
\newblock {\em Comm. Pure Appl. Math.}, 62(5):597--638, 2009.

\bibitem{C12}
A.~Ciomaga, 
\newblock {On the strong maximum principle for second order
  nonlinear parabolic integro - differential equations}, 
\newblock  {\em Adv. Differ. Equ.}17, No. 7-8, 635-671, 2012.
  
\bibitem{CIL92}
M.~G. Crandall, H.~Ishii, and P.-L. Lions.
\newblock User's guide to viscosity solutions of second order partial
  differential equations.
\newblock {\em Bull. Am. Math. Soc., New Ser.}, 27(1):1--67, 1992.

\bibitem{CJ17}
E. Chasseigne and E.R. Jakobsen, 
\newblock On nonlocal quasilinear equations and their local limits.,
\newblock {\em J. Differential Equations} 262, no. 6, 3759 - 3804, 2017.

\bibitem{DT23}
G. Dávila, and E. Topp.
\newblock The Nonlocal Inverse Problem of Donsker and Varadhan.
\newblock {\em Journal of Functional Analysis} 284 (9), 109885, 2023.

\bibitem{FG10}
A.~Figalli and N.~Gigli.
\newblock A new transportation distance between non-negative measures, with
  applications to gradients flows with {Dirichlet} boundary conditions.
\newblock {\em J. Math. Pures Appl. (9)}, 94(2):107--130, 2010.

\bibitem{GMS19}
N.Guillen, C. Mou, and A. {\'S}wi{\c{e}}ch.
\newblock Coupling {L{\'e}vy} measures and comparison principles for viscosity
  solutions.
\newblock {\em Trans. Am. Math. Soc.}, 372(10):7327--7370, 2019.

\bibitem{I05}
C. Imbert,
\newblock {A non-local regularization of first order {H}amilton-{J}acobi
  equations}
\newblock {\em J. Differential Equations}, 211, 218-246, 2005.

\bibitem{IL90}
H.~Ishii and P.-L. Lions.
\newblock Viscosity solutions of fully nonlinear second-order elliptic partial
  differential equations.
\newblock {\em J. Differential Equations}, 83(1):26--78, 1990.

\bibitem{Ishii87}
H. Ishii.
\newblock Perron's method for {Hamilton}-{Jacobi} equations.
\newblock {\em Duke Math. J.}, 55:369--384, 1987.

\bibitem{JK05}
E.~R. Jakobsen and K.~H. Karlsen,
\newblock {Continuous dependence estimates for viscosity solutions of
  integro-{PDE}s.,}
\newblock {\em J. Differential Equations}, 212, 278-318, 2005.

\bibitem{JK06}
E.~R. Jakobsen and K.~H. Karlsen.
\newblock {A "maximum principle for semicontinuous functions'' applicable to
  integro-partial differential equations,}
\newblock {\em NoDEA Nonlinear Differential Equations Appl.}, 13, 137-165, 2006.

\bibitem{LL89}
J. M. Lasry, and P. L. Lions. 
\newblock Nonlinear elliptic equations with singular boundary conditions and stochastic control with state constraints. 
\newblock Math. Ann. 283, 583-630, 1989.

\bibitem{MS15}
C. Mou and A. {\'S}wi{\c{e}}ch.
\newblock {Uniqueness of viscosity solutions for a class of integro-differential equations,}
\newblock {\em NoDEA Nonlinear Differential Equations Appl.} 22, no. 6, 1851-1882, 2015.

\bibitem{OS05}
B.~{\O}ksendal and A.~Sulem.
\newblock {\em Applied stochastic control of jump diffusions}.
\newblock Universitext. Cham: Springer, 3rd expanded and updated edition, 2019.

\bibitem{S91a}
A. Sayah,
\newblock { \'{E}quations d'{H}amilton-{J}acobi du premier ordre avec termes
  int\'egro-diff\'erentiels. {I}. {U}nicit\'e des solutions de viscosit\'e,}
\newblock { \em Comm. Partial Differential Equations}, 16, 1057-1074, 1991.


\bibitem{S86b}
H.~M. Soner.
\newblock Optimal control with state-space constraint. {II}.
\newblock {\em SIAM J. Control Optim.}, 24(6):1110--1122, 1986.

\bibitem{S88}
H.~M. Soner.
\newblock Optimal control of jump-Markov processes and viscosity solutions.
\newblock {\em Stochastic differential systems, stochastic control theory and applications,} 
Proc. Workshop Minneapolis 1986, IMA Vol. Math. Appl. 10, 501-511, 1988.
\end{thebibliography}
\end{document}